\UseRawInputEncoding
\documentclass[11pt]{amsart}
\usepackage{nccmath}

\oddsidemargin 3ex
\evensidemargin 3ex
\textheight 7.9in
\textwidth 6.0in


\usepackage{amsmath,amsthm,amssymb}
\usepackage{times}
\usepackage{enumerate}
\usepackage{color}
\usepackage{accents}
\newcommand{\ubar}[1]{\underaccent{\bar}{#1}}

\newtheorem{prop}{Proposition}
\newtheorem{theo}[prop]{Theorem}
\newtheorem{lemm}[prop]{Lemma}

\newtheorem{claim}{Claim}

\newcommand{\be}{\begin{equation}}
\newcommand{\ee}{\end{equation}}
\newcommand{\lt}{\left}
\newcommand{\rt}{\right}
\newcommand{\al}{\alpha}
\newcommand{\e}{\epsilon}
\renewcommand{\leq}{\leqslant}
\renewcommand{\geq}{\geqslant}
\newcommand{\td}{\tilde}

\newcommand{\ka}{\kappa}

\newcommand{\s}{\sigma}
\newcommand{\R}{\mathbb{R}}
\newcommand{\M}{\mathcal{M}}
\newcommand{\bx}{\bar{x}}

\newcommand{\goto}{\rightarrow}
\newcommand{\dS}{\mathbb{S}}
\newcommand{\us}{u^*}
\newcommand{\uj}{u^J}
\newcommand{\ujs}{u^{J*}}

\newcommand{\vjs}{\varphi^{J*}}

\newcommand{\lus}{\ubar{u}^*}

\newcommand{\lu}{\ubar{u}}

\newcommand{\gjs}{g^{J*}}
\newcommand{\uu}{\bar{u}}

\newcommand{\gas}{\gamma^*}

\newcommand{\T}{\partial}
\newcommand{\p}{\partial}
\newcommand{\hp}{\hat{p}}
\newcommand{\pM}{\psi^{M_1}}

\newcommand{\w}{w^*}
\newcommand{\z}{\mathbf{z}}
\newcommand{\tz}{\tilde{\mathbf{z}}}
\newcommand{\hz}{\hat{\mathbf{z}}}
\newcommand{\B}{\mathcal{B}}
\newcommand{\F}{\mathcal{F}}
\newcommand{\Chi}{\mbox{\Large$\chi$}}
\newcommand{\conv}{\text{Conv}}


\numberwithin{equation}{section}

\begin{document}
\setlength{\baselineskip}{1.2\baselineskip}

\title[Constant Hessian curvature hypersurface in the Minkowski space]
{entire spacelike constant $\sigma_k$ curvature hypersurfaces with prescribed boundary data at infinity}

\author{Zhizhang Wang}
\address{School of Mathematical Science, Fudan University, Shanghai, China}
\email{zzwang@fudan.edu.cn}
\author{Ling Xiao}
\address{Department of Mathematics, University of Connecticut,
Storrs, Connecticut 06269}
\email{ling.2.xiao@uconn.edu}
\thanks{ Research of the first author is sponsored by Natural Science Foundation of Shanghai, No.20JC1412400, 20ZR1406600 and supported by NSFC Grants No.11871161.}

\begin{abstract}
 In this paper, we investigate the existence and uniqueness of convex, entire, spacelike hypersurfaces of constant
$\s_k$ curvature with prescribed set of lightlike directions $\F\subset\dS^{n-1}$ and perturbation $q$ on $\F$. We prove that given a closed set $\F$ in the ideal boundary at infinity of hyperbolic space and a perturbation $q$ that satisfies some mild conditions, there exists a complete entire spacelike constant $\sigma_k$ curvature hypersurface $\M_u$ with prescribed set of lightlike directions $\F$ satisfying when $\frac{x}{|x|}\in\F,$ as $|x|\goto\infty,$ $u(x)-|x|\goto q\lt(\frac{x}{|x|}\rt).$
This result is new even for the case of constant Gauss curvature. We also prove that when the Gauss map image is a half disc $\bar{B}_1^+$ and the perturbation $q\equiv 0,$  if a CMC hypersurface $\M_u$ satisfies $|u(x)-V_{\bar{\mathcal{B}}_+}(x)|$ is bounded, then $u(x)$ is unique.

\end{abstract}

\maketitle

\section{Introduction}
Let $\R^{n, 1}$ be the Minkowski space with the Lorentzian metric
\[ds^2=\sum_{i=1}^{n}dx_{i}^2-dx_{n+1}^2.\]
In this paper, we study convex spacelike hypersurfaces with positive constant
$\sigma_k$ curvature in Minkowski space $\R^{n, 1}$. Here, $\s_k$ is the $k$-th elementary symmetric polynomial, i.e.,
 \[\s_k(\ka)=\sum\limits_{1\leq i_1<\cdots<i_k\leq n}\ka_{i_1}\cdots\ka_{i_k}.\]
Any such hypersurface can be written locally as a graph of a function
$x_{n+1}=u(x), x\in\R^n,$ satisfying the spacelike condition
\be\label{int1.1}
|Du|<1.
\ee

Treibergs started the research of constructing nontrivial entire spacelike CMC hypersurfaces in \cite{Tre}. He showed that
for any $q\in C^2(\dS^{n-1}),$ there is a spacelike, convex, CMC hypersurface $\M_u=\{(x, u(x))\mid x\in\R^n\}$ with bounded principal curvatures, such that as $|x|\goto\infty$,
 $u(x)\goto |x|+q\lt(\frac{x}{|x|}\rt)$. The result in \cite{Tre} was generalized by Choi-Treibergs in \cite{CT}, where they proved that
 for any closed set $\F\subset\dS^{n-1}$ and $q\in C^0(\F),$ there is a spacelike convex CMC hypersurface $\M_u,$ such that when $\frac{x}{|x|}\in \F,$
  $u(x)\goto |x|+q\lt(\frac{x}{|x|}\rt),$ as $|x|\goto\infty.$ We will call $q$ the perturbation of $u$ on lightlike directions.

One natural question to ask is: can we construct convex entire spacelike constant
$\s_k$ curvature hyersurfaces with prescribed set of lightlike directions $\F\subset\dS^{n-1}$ and an arbitrary $C^0$ perturbation on $\F$?

There have been many challenges to answering this question. Here are some partial results.
Li \cite{Li} extended the result in \cite{Tre} to constant Gauss curvature. He proved that for any $q\in C^2(\dS^{n-1}),$ there is a spacelike constant Gauss curvature hypersurface $\M_u$ with bounded principal curvatures, such that as $|x|\goto\infty$,
 $u(x)\goto |x|+q\lt(\frac{x}{|x|}\rt)$. In 2006, Guan-Jian-Schoen \cite{GJS} showed that if
 $\F=\p B_1^+\cap\dS^{n-1}=\dS^{n-1}\cap\{x_1\geq 0\}$ and $q\in C^\infty (\bar{B}_1^+)$ satisfying $q|_{\p_0B_1^+}$ is affine, then
 there is a spacelike constant Gauss curvature hypersurface $\M_u$ such that when $\frac{x}{|x|}\in\dS^{n-1}_+,$
 $u(x)\goto |x|+q\lt(\frac{x}{|x|}\rt)$ as $|x|\goto\infty$. Here $B_1^+=\{x\mid |x|<1\,\,,x_1>0\}$ and $\p_0B_1^+=\p B_1^+\cap \{x_1=0\}.$  Very recently, under the same settings as in \cite{Tre} and \cite{Li}, Ren-Wang-Xiao\cite{RWX} and Wang-Xiao \cite{WX} solved the existence problem for constant $\s_{k}$ curvature hypersurfaces. More precisely, for any $q\in C^2(\dS^{n-1}),$ they constructed an entire spacelike, strictly convex, constant $\s_k$ curvature hypersurface $\M_u$ with bounded principal curvatures, which satisfies as $|x|\goto\infty$,
 $u(x)\goto |x|+q\lt(\frac{x}{|x|}\rt).$

 \subsection{Main results}
 \label{subintr1}
 In this paper, we will investigate the existence and uniqueness of convex, entire, spacelike hypersurfaces of constant
$\s_k$ curvature with prescribed lightlike directions $\F$ and perturbation $q$ on $\F$. Our main results are the following.
\begin{theo}
\label{intthm1}
Suppose $\mathcal{F}\subset\dS^{n-1}$ is the closure of an open subset with $\T\F\in C^{1,1}$ and $q\in C^{2, 1}(\dS^{n-1})\cap C^{2,1}_0(\F)$ is a set of $C^{2,1}$ function satisfying $q\equiv 0$ on $\dS^{n-1}\setminus\F.$  Then for $1<k\leq n$, there exists a smooth,
entire, spacelike, strictly convex hypersurface $\M_u=\{(x,u(x))\mid x\in\R^n\}$ satisfying
\be\label{int1.0}
\sigma_k(\ka[\M_u])=\binom{n}{k},
\ee
 where $\ka[\M_u]=(\kappa_1,\kappa_2,\cdots,\kappa_n)$ is the principal curvatures of $\M_u$.
 Moreover, when $\frac{x}{|x|}\in\F,$
 \be\label{int1.0'}
 u(x)\goto |x|+q\lt(\frac{x}{|x|}\rt),\,\mbox{as $|x|\goto\infty.$}
 \ee
Furthermore, the Gauss map image of $\M_u$ is the convex hull $\conv(\F)$ of $\F$ in the unit disc.
\end{theo}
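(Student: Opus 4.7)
The strategy is to approximate the pair $(\F,q)$ by a sequence of global perturbations on $\dS^{n-1}$, apply the existence result of \cite{WX,RWX} as a black box, and pass to the limit. Since $\F$ is the closure of an open set with $\T\F\in C^{1,1}$ and $q\in C^{2,1}(\dS^{n-1})\cap C^{2,1}_0(\F)$, I would construct $q_m\in C^{2,1}(\dS^{n-1})$ of the form $q_m=q+m\eta_m$, where $\eta_m\geq 0$ is a $C^{2,1}$ cutoff vanishing on $\F$ and bounded below by $1$ outside a $1/m$-neighborhood of $\F$. Then $q_m\equiv q$ on $\F$, while $q_m(\xi)\goto+\infty$ for every $\xi\in\dS^{n-1}\setminus\F$. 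By the theorem of Wang--Xiao \cite{WX} (and \cite{RWX}), each $q_m$ produces an entire, spacelike, strictly convex hypersurface $\M_{u_m}$ with $\sigma_k(\ka[\M_{u_m}])=\binom{n}{k}$ and $u_m(x)-|x|\goto q_m(x/|x|)$ as $|x|\goto\infty$.

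\textbf{Uniform estimates and compactness.} To extract a limit I would first derive $m$-independent local bounds on $\{u_m\}$. A natural upper barrier is the support-type convex function
\[V^+(x):=\sup_{\xi\in\F}\bigl(x\cdot\xi+q(\xi)\bigr),\]
which satisfies $V^+(x)-|x|\goto q(x/|x|)$ for $x/|x|\in\F$. Comparing $u_m$ with $V^+$ (plus a controlled $m$-independent correction) via the maximum principle, using that $q_m\geq q$ everywhere at infinity, delivers $u_m\leq V^+ + C$. A matching local lower bound follows from a tangent-hyperplane or hyperboloidal barrier attached at an interior point of $\conv(\F)$. With uniform local $C^0$ bounds in hand, the interior Pogorelov-type $C^2$ estimates for strictly convex $\sigma_k$-admissible spacelike graphs, together with Evans--Krylov and Schauder bootstrapping, yield local higher-regularity estimates. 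A diagonal subsequence then converges in $C^\infty_{\mathrm{loc}}(\R^n)$ to a limit $u$ which is entire, spacelike, strictly convex, and satisfies $\sigma_k(\ka[\M_u])=\binom{n}{k}$.

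\textbf{Asymptotics at infinity and the Gauss image.} For $\xi\in\F^\circ$, sandwiching $u$ between $V^+$ and a tangent light-cone--type lower barrier that reproduces the value $q(\xi)$ in the direction $\xi$ yields $u(t\xi)-t\goto q(\xi)$ as $t\goto\infty$. For $\xi\in\T\F$, the $C^{2,1}_0$ vanishing of $q$ on $\F$ combined with the $C^{1,1}$ regularity of $\T\F$ forces the two sides of the sandwich to match continuously, giving $u(t\xi)-t\goto 0=q(\xi)$. For $\xi\notin\F$, the divergence $q_m(\xi)\goto+\infty$ passes to the limit through the lower-barrier estimate and forces $u(t\xi)-t\goto+\infty$; in particular $\xi$ is not a lightlike direction of $\M_u$. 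Finally, the Gauss image of a strictly convex entire spacelike graph is a convex subset of the open unit ball whose closure meets $\dS^{n-1}$ precisely in the set of lightlike directions, so $G(\M_u)=\conv(\F)$.

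\textbf{Main obstacle.} I expect the asymptotic matching to be the heart of the proof. Two compounding difficulties arise: the barriers themselves must be $k$-convex spacelike graphs, which is considerably more restrictive than in the CMC case treated in \cite{CT}; and the approximation with $q_m\goto+\infty$ outside $\F$ must not disturb the fine asymptotic behavior near $\T\F$, which is exactly where the hypotheses $q\in C^{2,1}_0(\F)$ and $\T\F\in C^{1,1}$ are essential. Producing quantitative barrier control in an $m$-independent neighborhood of $\T\F$, so that $u_m$ stays close to $V^+$ uniformly, is where most of the technical effort should concentrate.
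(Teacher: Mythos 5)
Your approximation scheme $q_m=q+m\eta_m$ with $q_m\to+\infty$ off $\F$ is fatally flawed at the very first step. By convexity of $u_m$ and the asymptotics $u_m(t\xi)-t\to q_m(\xi)$, one has $u_m(0)\geq q_m(\xi)$ for every $\xi\in\dS^{n-1}$ (indeed $u_m(t\xi)-t\leq u_m(0)$ for all $t$ by the three-point convexity inequality, then let $t\to\infty$), hence $u_m(0)\geq\sup q_m\to+\infty$. There is no uniform local $C^0$ bound and the sequence simply escapes to $+\infty$ on every compact set; no subsequence can converge to a finite function. Your claimed comparison $u_m\leq V^+ +C$ is also backwards: since $q_m\geq q$, the hypersurfaces $\M_{u_m}$ lie \emph{above} (not below) the barrier built from $q$. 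Even if one renormalized by $u_m-u_m(0)$, the asymptotic values on $\F$ would be destroyed and you would lose \eqref{int1.0'}. A second, independent gap concerns spacelikeness: even with uniform local $C^0$ bounds, one needs the local gradient bounded uniformly away from $1$ to make the limit spacelike, and comparison with $V^+(x)=\sup_{\xi\in\F}(x\cdot\xi+q(\xi))$ cannot provide this, since $|DV^+|\equiv 1$. The paper stresses this point explicitly: the entire Subsections \ref{constructcut} and \ref{uls} are devoted to constructing a spacelike cutoff function $\td{\Psi}_\F$ with $\td{\Psi}_\F<\lu$ on a given compact set while $\td{\Psi}_\F\geq \uu$ near infinity, precisely because nothing cheaper gives local $C^1$ estimates.

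The paper's route is quite different and does not quote \cite{WX} as a black box on the enlarged data. It works in the dual picture via the Legendre transform: solve Dirichlet problems \eqref{dirichlet-ball} on an exhausting sequence $\td{F}_J\nearrow\conv(\F)$ with boundary data coming from (a mollification of) the Legendre transform $\lus$ of a strictly convex subsolution $\lu$, so that the gradient images exhaust $\R^n$; the barriers $\lu$ and $\uu$ and the cutoff $\td{\Psi}_\F$ are themselves built from semitroughs (entire $\sigma_k$-solutions with half-disc Gauss image, Sections \ref{se}--\ref{properties of semitroughs}) composed with $p(y)=Dq(y)\pm My$ and then sup/inf over translates; the $C^{2,1}$ assumption on $q$ is exactly what makes $\uu\geq\lu$ at infinity (Lemmas \ref{sub-tricky direction lem}, \ref{subsuper-comparison lem}). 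If you want to keep your ``approximate the data'' philosophy, the approximation has to shrink the Gauss map image rather than send the perturbation to infinity, and you would still need the spacelike cutoff function idea to pass to the limit.
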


We want to point out here the condition $q\in C^{2, 1}(\dS^{n-1})\cap C^{2,1}_0(\F)$ is needed to construct the sub and supersolution
of equation \eqref{int1.0} that satisfy condition \eqref{int1.0'}. In particular, we require $q\in C^{2, 1}$ instead of $q\in C^2$ to make sure
that, as $|x|\goto\infty$ the supersolution $\uu$ is greater than or equal to the subsolution $\lu$ (for details see Lemma \ref{sub-tricky direction lem}
and Lemma \ref{subsuper-comparison lem}). We also note that Theorem 4.1 in \cite{GJS} is a special case of Theorem \ref{intthm1}.

In 1976, Cheng-Yau \cite{CY} proved the only entire maximal hypersuface ($\sigma_1=0$) is a hyperplane. Very recently, Hong-Yuan \cite{HY} obtained the
precise asymptotic behavior of maximal hypersurfaces over exterior domains. However, there is no known uniqueness result for the constant $\s_k$ curvature spacelike hypersurface with prescribed set of lightlike directions $\F$ and perturbation $q$. This problem is open even for constant mean curvature hypersurfaces. The main difficulty is that we only know the asymptotic behavior of $u(x)$ at infinity in the lightlike directions, and only have some mild information in other directions. Our second theorem is a uniqueness result for CMC hypersurfaces under certain conditions. In particular, we prove
\begin{theo}
\label{intthm2}
Let $\M_u$ be an $n$-dimensional CMC hypersurfaces satisfying following conditions:

(1) $\sigma_1(\ka[\M_u])=n;$

(2) When $\frac{x}{|x|}\in\bar{\mathcal{B}}_+$ or $x_1\goto-\infty, |\bar{x}|\goto\infty,$ we have
$u(x)-V_{\bar{\mathcal{B}}_+}(x)\goto 0$ as $|x|\goto\infty$;

(3) $|u-V_{\bar{\mathcal{B}}_+}(x)|\leq C_0$ for any $x\in\R^n.$

Then $u$ is the standard semitrough $\z^1.$ Here $\bar{\B}_+=\dS^{n-1}\cap\{x_1\geq 0\}$ and $\bar{x}=(x_2, \cdots, x_n).$
\end{theo}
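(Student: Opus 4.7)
The plan is to compare $u$ with the standard semitrough $\z^1$ via a maximum principle argument on the difference $w:=u-\z^1$. Both $u$ and $\z^1$ satisfy the same quasilinear CMC equation
\[
\operatorname{div}\lt(\frac{Du}{\sqrt{1-|Du|^2}}\rt)=n,
\]
so subtracting and applying the mean value theorem in the gradient variable shows that $w$ solves a linear, locally uniformly elliptic equation $a^{ij}(x)D_{ij}w+b^{i}(x)D_{i}w=0$ on all of $\R^n$ (at every point where both graphs are smooth and strictly spacelike). By condition~(3) applied to both $u$ and $\z^1$, $w$ is globally bounded by $2C_0$; by condition~(2) applied to both, $w(x)\goto 0$ whenever $x/|x|\in\bar{\B}_+$ or $x_1\goto-\infty$ with $|\bx|\goto\infty$. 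The only directions at infinity not controlled by (2) lie in a cylindrical neck of the form $\{x_1\leq -R,\ |\bx|\leq R_0\}$, where a priori we know only $|w|\leq 2C_0$.

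The heart of the argument is to exploit the translation invariance of the entire setup in the $\bx$-directions: $\z^1$ depends only on $x_1$ and the reference function $V_{\bar{\B}_+}$ is likewise $\bx$-translation invariant, so for any $a\in\R^{n-1}$ the shifted function $u_a(x_1,\bx):=u(x_1,\bx+a)$ still satisfies hypotheses~(1)--(3) with the same constant $C_0$. I would then apply a Phragm\'en--Lindel\"of type maximum principle to $w_a:=u_a-\z^1$ on large balls, combined with a barrier of the form $\varepsilon\vp(x)$ in which $\vp$ is convex and grows slowly enough that the spacelike condition $|D(\z^1+\varepsilon\vp)|<1$ is preserved and $\z^1+\varepsilon\vp$ remains a supersolution of the CMC equation. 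Letting the ball exhaust $\R^n$ and using the decay of $w_a$ supplied by condition~(2) in every direction except the neck, the $\bx$-invariance of $\z^1$ lets one absorb the translation parameter $a$ and rule out any positive supremum in the uncontrolled region. Taking $\varepsilon\goto 0$ then gives $u\leq\z^1$; the symmetric argument gives $u\geq\z^1$, hence $u\equiv\z^1$.

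The main obstacle is precisely the uncontrolled cylindrical neck where $x_1\to-\infty$ and $|\bx|$ stays bounded: there $|Du|$ may approach $1$, so the linearized operator degenerates and one cannot apply a uniform maximum principle directly. Two delicate points therefore have to be addressed. First, one must construct an admissible barrier $\vp$ that is compatible with the spacelike constraint for $\z^1+\varepsilon\vp$ near the lightlike cone, which will require using the explicit profile of the semitrough together with the geometry of $V_{\bar{\B}_+}$. Second, because $w$ only has boundedness (not decay) in the neck, the Phragm\'en--Lindel\"of inequality must be coupled with the $\bx$-translation symmetry so as to upgrade mere boundedness into a vanishing supremum. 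Once these two pieces are in place, the sliding comparison with $\z^1$ closes the argument and establishes uniqueness.
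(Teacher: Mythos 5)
Your proposal rests on a symmetry that does not exist. You assert that ``$\z^1$ depends only on $x_1$'' and that $V_{\bar{\B}_+}$ is $\bar{x}$-translation invariant, so that $u_a(x_1,\bar{x}):=u(x_1,\bar{x}+a)$ satisfies hypotheses (1)--(3) with the same constant. Both claims are false. From Section~\ref{se}, the standard semitrough is $\z^1(x)=\sqrt{f_1(x_1)^2+|\bar{x}|^2}$, which certainly depends on $\bar{x}$; it is invariant under $SO(n-1,1)$ (boosts and rotations), not under Euclidean $\bar{x}$-translations. Likewise, for $x_1<0$ one has $V_{\bar{\B}_+}(x_1,\bar{x})=|\bar{x}|$, so $V_{\bar{\B}_+}(x_1,\bar{x}+a)\neq V_{\bar{\B}_+}(x_1,\bar{x})$. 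Consequently conditions (2) and (3), which compare $u$ with $V_{\bar{\B}_+}$, are not preserved under the substitution $\bar{x}\mapsto\bar{x}+a$, and the entire ``absorb the translation parameter $a$'' mechanism collapses. Beyond this, the Phragm\'en--Lindel\"of barrier $\varphi$ is only described in aspiration --- you name the constraints (spacelike compatibility near the lightlike cone, subharmonicity for the linearized operator in the degenerate neck) but do not produce such a $\varphi$, and it is precisely there that the real difficulty lives.

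The paper's proof handles the neck $\{x_1\to-\infty,\ |\bar{x}|\ \text{bounded}\}$ in a completely different way, and the comparison is instructive. Rather than translating in $\bar{x}$ (which does nothing useful because the neck is already bounded in $\bar{x}$) the paper translates in $x_1$ and takes a compactness limit. In Step~1 it uses monotonicity of $x_1\mapsto u(x_1,\bar{x})$ for $x_1\leq 0$ (convexity) together with (3) to get a uniform limit $\tilde{u}(\bar{x})=\lim_{x_1\to-\infty}u(x_1,\bar{x})$ on slabs $\Omega_R$. In Step~2 it shows $|Du|$ is uniformly bounded away from $1$ on $\Omega_R$, i.e.\ the degeneracy that worries you in the neck simply does not happen: if $-e_1$ were lightlike, $u(x_1,0)$ would grow like $|x_1|$, contradicting (3). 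Step~3 then lets $u^i(x):=u(x_1-i,\bar{x})$ converge (after interior estimates from \cite{Tre}) to the $x_1$-independent function $\tilde{u}(\bar{x})$, which is an $(n-1)$-dimensional CMC hypersurface with full-ball Gauss map and is therefore the hyperboloid $\sqrt{(\frac{n-1}{n})^2+|\bar{x}|^2}$ by the known uniqueness theorem. This identifies the asymptotics of $u$ in the neck, so $u-\z^1\to 0$ in every direction, and a single application of the maximum principle closes the argument --- no barriers and no Phragm\'en--Lindel\"of are needed. If you want to rescue your framework, the lever you should pull is $x_1$-translation plus compactness, not a nonexistent $\bar{x}$-translation symmetry.
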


\subsection{Similarities, differences and difficulties of the current paper vs \cite{WX}}
\label{subintr2}
In this subsection, we will first review the ideas in \cite{WX}, then we will analyze the main differences and difficulties of ideas in this paper.
\subsubsection{Ideas and results in \cite{WX}}
\label{sub1.2.1}
Recall that in \cite{WX}, instead of solving equation \eqref{int1.0} directly, we studied the following problem
\be\label{main equation-ball}
\left\{
\begin{aligned}
F(\w\gas_{ik}\us_{kl}\gas_{lj})&=\frac{1}{\binom{n}{k}^{\frac{1}{k}}}\,\,\text{in $\td{F}$},\\
\us&=0\,\,\text{on $\F,$}
\end{aligned}
\right.
\ee
where $\w=\sqrt{1-|\xi|^2},$ $\gas_{ij}=\delta_{ij}-\frac{\xi_i\xi_j}{1+\w},$ $\us_{kl}=\frac{\p^2u}{\p\xi_k\p\xi_l},$ $\F\subset\dS^{n-1}$ as described in Theorem \ref{intthm1},
$\td{F}$ is the convex hull of $\F$ in $B_1:=\{\xi\mid |\xi|<1\},$ and $F(\w\gas_{ik}\us_{kl}\gas_{lj})=\lt(\frac{\s_n}{\s_{n-k}}(\ka^*[\w\gas_{ik}\us_{kl}\gas_{lj}])\rt)^{1/k}.$
Here, $\ka^*[\w\gas_{ik}\us_{kl}\gas_{lj}]=(\ka^*_1, \cdots, \ka^*_n)$ are the eigenvalues of the matrix $(\w\gas_{ik}\us_{kl}\gas_{lj}).$
By Subsection 2.3 and Lemma 15 of \cite{WX}, we know that if $u^*$ solves \eqref{main equation-ball} then the Legendre transform of $u^*,$ denoted by
$u,$ satisfies \eqref{int1.0}. Moreover, as $|x|\goto\infty,$ $u(x)\goto |x|+0$ when $\frac{x}{|x|}\in\F.$

Since \eqref{main equation-ball} is degenerate,
we considered the following approximating problems
\be\label{dirichlet-ball}
\left\{
\begin{aligned}
F(\w\gas_{ik}\ujs_{kl}\gas_{lj})&=\frac{1}{\binom{n}{k}^{\frac{1}{k}}}\,\,\text{in $\td{F}_J$}\\
\us&=\vjs\,\,\text{on $\p\td{F}_J,$}
\end{aligned}
\right.
\ee
where $\{\td{F}_J\}_{J=1}^\infty$ is a sequence of smooth convex set in $\td{F}$ that approaches $\td{F},$
$\vjs=\lus|_{\p\td{F}_J},$ and $\lus$ is the Legendre transform of the subsolution $\lu$ of \eqref{int1.0}. The solvability of
\eqref{dirichlet-ball} was proved in Section 5 of \cite{WX}. Thus, we obtained a sequence of solutions $\{u^{J*}\}.$
Considering the Legendre transform of $\{u^{J*}\}$ which we denote by $\{u^J\},$ then $\{u^J\}$ is a sequence of functions satisying
\eqref{int1.0} on bounded domain $\Omega_J\subset\R^n.$ Moreover, $\{\Omega_J\}$ is an exhausting sequence of sets on $\R^n.$
In Section 6 of \cite{WX}, we showed that $\{u_J\}$ converges to the desired entire solution $u$ of \eqref{int1.0}, which satisfies
when $\frac{x}{|x|}\in\F,$ as $|x|\goto\infty,$ $u(x)\goto |x|+0$. We want to point out that the hard part of the proof of the convergence is to obtain the local $C^1$ estimates (see Subsection 6.2 in \cite{WX}). In order to establish the local $C^1$ estimates,
by Lemma 22 of \cite{WX}, for any compact set $K\subset\R^n,$ we need to construct a spacelike cutoff function $\Psi_K$ such that $\Psi_K>\uu$ when
$|x|\goto\infty$ and $\psi_K<\lu$ when $x\in K,$ where $\uu$ is a supersolution of \eqref{int1.0}.

\subsubsection{Differences and difficulties of the current problem}
\label{sub1.2.2}
Let's take a closer look at the details of the process described in Subsubsection 1.2.1. Note that we require the domain $\{\Omega_J\}$ of $\{u^J\}$
to be an exhausting sequence of sets on $\R^n.$ This condition is equivalent to saying
\be\label{boundary-condition}
\lt|Du^{J*}\rt|_{\p\td{F}_J}\goto\infty\,\, \mbox{as $J\goto\infty$,}
\ee
which is highly nontrivial. This is why in equation \eqref{dirichlet-ball} we have to choose the boundary value $\vjs$ to be $\lus|_{\p\td{F}_J}$. Notice that when $\lu$ is a subsolution of \eqref{int1.0}, then its Legendre transform $\lus$ is a supersolution of \eqref{dirichlet-ball}.
In view of the Comparison Theorem, it's easy to see that this choice of $\vjs$ guarantees the condition \eqref{boundary-condition} to be met. Therefore, we can see that the existence of a strictly convex subsolution of equation \eqref{int1.0} is necessary for us.

On the other hand, when we look at the sequence $\{u^J\},$
it's not hard to see that in order to control the behavior of the limit function $u$ at infinity we also need a supersolution of \eqref{int1.0}. Moreover,
since we require $u\goto |x|+q\lt(\frac{x}{|x|}\rt)$ as $|x|\goto\infty$ for $\frac{x}{|x|}\in\F,$ our $\uu$ and $\lu$ have to satisfy
$\uu,\lu\goto |x|+q\lt(\frac{x}{|x|}\rt)$  as $|x|\goto\infty$ for $\frac{x}{|x|}\in\F$ as well.

In the case when $q\equiv 0$ (the case discussed in \cite{WX}), by \cite{BS} we know that
there exist constant Gauss curvature hypersurfaces with prescribed set of lightlike directions $\F$ and $0$ perturbation; by \cite {CT} we know that there
exist CMC hypersurfaces with prescribed set of lightlike directions $\F$ and $0$ perturbation. It's not difficult to see that we can use these two results to construct our sub and supersolution. The tricky part there is to construct spacelike cutoff functions $\Psi_K,$ which is necessary to obtain local $C^1$ estimates (for details see Subsection 6.2 of \cite{WX}).

In this paper, we want to consider the case when $q\neq 0.$ The first obstruction is that we don't have known subsolutions anymore. Therefore, we need to construct  subsolutions from scratch. Secondly, in order to construct $\Psi_K,$ we need a better control for the barrier function $\uu$ in the non-lightlike directions $\dS^{n-1}\setminus\F$. It's much harder to control the asymptotic behavior of $\uu$ for the case when $q\neq 0$ than the case when $q\equiv0$.

\subsection{Outline}
\label{outline}
Let's assume we have a strictly convex subsolution $\lu$ and a supersolution $\uu$ such that $\uu,\lu\goto |x|+q\lt(\frac{x}{|x|}\rt)$  as $|x|\goto\infty$ for $\frac{x}{|x|}\in\F$. Moreover, we also assume for every compact set $K\subset \R^n$ there exists a spacelike cutoff function $\Psi_K$ satisfying  $\Psi_K>\uu$ when $|x|\goto\infty$ and $\psi_K<\lu$ when $x\in K.$ Following the steps described in Subsubsection \ref{sub1.2.1}, we can obtain our desired entire solution satisfying \eqref{int1.0} and \eqref{int1.0'}. Therefore, in this paper, we will focus on the constructions of $\lu,$ $\uu,$ and $\Psi_K.$

In Section \ref{se}, adapting the idea of \cite{CT},  we construct a strictly convex entire solution to equation \eqref{int1.0}, whose Gauss map image is
the half disk $\bar{B}_1^+:=\{\xi\in\R^n\mid |\xi|\leq 1, \xi_1\geq 0\}.$ We denote this solution by $\z^k$ and call it semitrough. By applying Lorentz transform to $\z^k,$ we know for any closed geodesic ball
$\bar{\B}\subset\dS^{n-1},$ there exists a strictly convex entire solution to equation \eqref{int1.0}, whose Gauss map image is $\conv(\bar{\B}).$
We denote this solution by $\z^k_{\bar{\B}}.$

In Section \ref{properties of semitroughs}, we list some basic properties of semitroughs that are needed in later sections.

In Section \ref{construction}, inspired by the ideas of \cite{CT} and \cite{BS}, we denote
\[\ubar{\z}^k_{\F}=\sup\limits_{\bar{\B}\subset\F, \delta(\bar{\B})\geq\delta_0}\z^k_{\bar{\B}}(x),\]
and
\[\td{\z}^k_{\F}(x, y)=q(y)-M+\ubar{\z}^k_{\F}(x+p(y)),\]
where $p(y)=Dq(y)+My.$ Then the subsolution is chosen as
\[\lu^k_{\F}(x)=\sup\limits_{y\in\dS^{n-1}}\td{\z}^k_{\F}(x, y).\]
 One can check that $\lu^k_{\F}(x)\goto V_{\F}(x)+q\lt(\frac{x}{|x|}\rt)$ as $|x|\goto\infty$ for $\frac{x}{|x|}\in\F$, where $V_{\F}(x)=\sup\limits_{\lambda\in\F}x\cdot\lambda.$

The construction of the supersolution is more delicate. Unlike \cite{CT}, in this paper, we need to construct the spacelike cutoff function $\Psi_K,$ which demands a better control of $\uu^k_{\F}(x)$ at infinity. Therefore, our construction of $\uu^k_{\F}(x)$ is different from the one in \cite{CT}. Moreover,
as stated in Theorem \ref{intthm1}, we require that the perturbation $q\in C^{2,1}(\dS^{n-1})\cap C^{2,1}_0(\F)$ to be a $C^{2,1}$ function satisfying $q\equiv 0$ on $\dS^{n-1}\setminus\F,$ which is more restrictive than \cite{CT}. In particular,
let \[\bar{\z}^k_{\F}(x)=\inf\limits_{\bar{\B}\supset\F, \delta(\bar{\B})\leq\pi-\delta_0}\z^k_{\bar{\B}}(x)\]
and \[\hz^k_{\F}(x, y)=q(y)+MV_\F(y)+\bar{\z}^k_{\F}(x+\hp(y)),\]
where $\hp(y)=Dq(y)-My.$ Then the supersolution is chosen as
\[\uu^k_{\F}(x)=\inf\limits_{y\in\bar{\B}_{\delta}\lt(\frac{x}{|x|}\rt)}\hz^k_{\F}(x, y).\]
Here, $\delta>0$ is so small such that for any$\frac{x}{|x|}\perp(\p\td{F}\cap B_1)$ and $\frac{x}{|x|}\notin\F,$
$\bar{\B}_{\delta}\lt(\frac{x}{|x|}\rt)\cap\F=\emptyset.$ Here $\td{F}$ is the convex hull of $\F$ in $B_1.$

The sub and supersolutions we obtained above satisfy $\uu^k_{\F}(x)-\lu^k_{\F}(x)\goto 0$ as $|x|\goto\infty$ in almost all directions.
This enables us to construct the spacelike cutoff function $\Psi_K$ (for details see Subsections \ref{constructcut} and \ref{uls}).

Recall that in Subsubsection \ref{sub1.2.1} we pointed out that to solve this problem we will need the Legendre transform of the subsolution $\lu.$
Despite the subsolution $\lu^k_{\F}(x)$ we constructed in Section \ref{construction} is strictly convex, it's not differentiable. In Section \ref{ltbf}, we show that the Legendre transform of $\lu^k_{\F}(x),$ denoted by $\lus$ is however in $C^1(\td{F}).$

Unfortunately, in order to solve the Dirichlet problem \eqref{dirichlet-ball}, the boundary value $\varphi^{J*}=\lus|_{\p\td{F}_J}$ need to be a $C^{1, 1}$ function at least. Therefore, in Section \ref{cs}, we study the properties of the inf-convolution of $\lus.$ Combining the results in Sections \ref{construction}, \ref{ltbf}, and \ref{cs}, together with results in \cite{WX}, we prove Theorem \ref{intthm1}.

From our construction of subsolutions in Section \ref{construction}, we can see that, in fact we can find more than one subsoltion to equation
\eqref{int1.0} that satisfies the desired asymptotic behavior at infinity. One may ask: do different subsolutions give us different solutions? In other words, can we construct more than one solution to equation \eqref{int1.0} that satisfies $u(x)\goto |x|+q\lt(\frac{x}{|x|}\rt)$ for $\frac{x}{|x|}\in\F$ as $|x|\goto\infty$? In Section \ref{uniqueness}, we discuss about this question and prove Theorem \ref{intthm2}.

\bigskip
\section{Semitrouph}
\label{se}
In $\R^{n, 1},$ the spacelike hypersurfaces invariant under $SO(n-1, 1)$ action are described by a single function $f(x_1).$
Denoting $\bx=(x_2, \cdots, x_n)$, then the height function is $u(x_1, \cdots, x_n)=\sqrt{f(x_1)^2+|\bx|^2}.$
Let $\M_u=\{(x, u(x))\mid x\in\R^n\}$ be the hypersurface determined by $u.$ By a straightforward calculation we get,
$\ka[\M_u],$ the principal curvatures of $\M_u$ are
\[\ka_1=\frac{f''}{(1-f')^{3/2}},\,\,\ka_2=\cdots=\ka_n=\frac{1}{f(1-f'^2)^{1/2}}.\]
Since $\s_k(\ka[\M_u])=\s_k(\ka|\ka_1)+\ka_1\s_{k-1}(\ka|\ka_1)=\binom{n}{k},$ the function $f$ satisfies the following
ODE
\be\label{se1.1}
\frac{kf''}{f^{k-1}(1-f'^2)^{k/2+1}}+\frac{n-k}{f^k(1-f'^2)^{k/2}}=n.
\ee
Following \cite{CT} we can prove
\begin{lemm}
\label{selem1}
There is a solution $f(t)$ of \eqref{se1.1} defined for all $t\in\R$ with the following properties:

(1) $0<f'<1$ and $f''>0$ for all $t\in\R.$

(2) $\lim\limits_{t\goto-\infty}f(t)=\lt(\frac{n-k}{n}\rt)^{1/k}:=l_k$.

(3) $\max\{l_k, t\}<f(t)<\sqrt{1+t^2}$ for all $t.$

(4) There is a constant $C>0$ depending on $n$ and $k$ so that $|f(t)-\sqrt{1+t^2}|<Ct^{-n-1}$
whenever $t>0$ large.
\end{lemm}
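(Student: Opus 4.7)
The plan is to reduce \eqref{se1.1} to the planar autonomous system
\[
\dot f = p, \qquad \dot p = \frac{(1-p^2)}{kf}\bigl[nf^k(1-p^2)^{k/2}-(n-k)\bigr],
\]
whose unique equilibrium in the strip $\{f>0,|p|<1\}$ is $(l_k,0)$, and to exhibit the desired $f$ as the branch of its unstable manifold that enters the open quadrant $\{f>l_k,\,p>0\}$. A useful preliminary observation is that $\tilde f(t):=\sqrt{1+t^2}$ is itself a solution of \eqref{se1.1}; in the phase plane it traces the level set $\{f^2(1-p^2)=1\}$ and will serve as the natural upper barrier.

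First I would linearize at $(l_k,0)$. A routine computation gives a Jacobian with eigenvalues $\pm\sqrt{n-k}/l_k$, so for $1<k<n$ the equilibrium is a hyperbolic saddle and the stable--unstable manifold theorem furnishes a $C^1$ one-dimensional unstable arc tangent to $(1,\sqrt{n-k}/l_k)$. A suitable translation of $t$ then yields the trajectory with $f(-\infty)=l_k$, $f'(-\infty)=0$, which is exactly (2); the degenerate case $k=n$ (where $l_n=0$) is treated separately by analyzing the nilpotent equilibrium at the origin, in the spirit of \cite{CT}.

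The second step is to trap the trajectory and extract (1) and (3). The key monotone quantity is $\Phi(t):=f^2(1-p^2)$; a direct manipulation of \eqref{se1.1} yields
\[
\Phi'(t)=\frac{2nfp(1-p^2)}{k}\bigl[1-\Phi^{k/2}\bigr],
\]
so $\Phi$ is strictly increasing as long as $0<p<1$ and $\Phi<1$. Since $\Phi(-\infty)=l_k^2$ and the value $\Phi=1$ would place the trajectory on the orbit of $\tilde f$ (violating ODE uniqueness), $\Phi$ increases monotonically to $1$. The identity $kff''=(1-p^2)\bigl[nf^k(1-p^2)^{k/2}-(n-k)\bigr]$ is positive precisely when $\Phi>l_k^2$, so $f''>0$ throughout and $p=f'$ is strictly increasing in $(0,1)$, which gives (1) and also global existence in $t$. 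For (3): $f>l_k$ is immediate; $f<\sqrt{1+t^2}$ follows from $\Phi<1$ strictly along the trajectory; and for $f>t$ on $t>0$ it suffices to observe that $\psi(t):=f(t)-t$ is strictly decreasing ($\psi'=f'-1<0$) with $\psi(-\infty)=+\infty$, while the convergence $\Phi\to 1$ forces $\psi(t)\to 0^+$; strict monotonicity then gives $\psi>0$ for all $t$.

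For (4) I would linearize \eqref{se1.1} about $\tilde f$. Setting $h(t):=\sqrt{1+t^2}-f(t)>0$ and expanding to first order produces the linear equation
\[
\tilde f\,h''+(n+2)\tilde f'\,h'-\frac{n-1}{\tilde f^{3}}\,h=0,
\]
which as $t\to\infty$ asymptotes to $h''+\tfrac{n+2}{t}h'=0$, whose fundamental system is $\{1,\,t^{-(n+1)}\}$. Since $h\to 0$, the decaying branch $h\sim Ct^{-(n+1)}$ must be selected, which gives (4). The main obstacle I expect is promoting this formal indicial identification to a pointwise bound $|h(t)|\leq Ct^{-(n+1)}$ valid for all large $t$; I would do this by constructing explicit sub- and supersolutions of the form $\alpha t^{-(n+1)}$ for the fully nonlinear remainder and invoking a maximum principle, following the template of \cite{CT} and \cite{Li}. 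By contrast the phase-plane arguments for (1)--(3) are essentially a textbook saddle-connection analysis once the monotone quantity $\Phi$ is identified.
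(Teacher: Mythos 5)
Your proposal takes a genuinely different route (phase-plane unstable manifold) from the paper, which substitutes $u=f'$, $y=f$, integrates the resulting first-order ODE exactly to the closed-form first integral $(1-u^2)^{-k/2}=y^k+y_0^{n-k}(1-y_0^k)/y^{n-k}$, constructs a sequence of IVP solutions $f_i$ with $f_i(a_i)=y_i\downarrow l_k$, $f'_i(a_i)=0$, carefully picks the time shift $a_i$ so that the hyperbola $\sqrt{1+t^2}$ is a barrier, shows $a_i\to-\infty$, and passes to a limit. Your monotone quantity $\Phi=f^2(1-f'^2)$ repackages the same first integral, and your saddle-point analysis gives the same germ at $-\infty$; the difference in viewpoint is legitimate and could in principle give a cleaner write-up.

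There is, however, a genuine gap in your argument for (3), and it is exactly the point the paper takes pains over. The claims ``$f<\sqrt{1+t^2}$ follows from $\Phi<1$'' and ``$\Phi\to1$ forces $\psi(t)\to0^+$'' are both false as stated. The bound $\Phi<1$ is a statement about the $(y,u)$ orbit: it says that at each height $y$ the slope satisfies $u(y)>v(y)$, where $v$ is the hyperbola's slope at the same height, i.e.\ $f$ passes each height level \emph{faster} than $\tilde f=\sqrt{1+t^2}$. This is perfectly compatible with $f$ overtaking $\tilde f$ pointwise in $t$, since the pointwise comparison depends on a relative time shift which $\Phi$ does not see. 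Likewise, if $f(t)\sim t+c$ with $f'\to1$ then $\Phi\to1$ for \emph{any} constant $c$, so $\Phi\to1$ cannot pin down $\psi_\infty=0$. The correct argument is to first show that $\int^\infty(1-f')<\infty$ (this follows from $1-f'\sim 1/(2f^2)$ once $\Phi\to1$ is established, which itself requires a short boundedness/linear-growth argument you have glossed over), so that $\psi$ has a finite limit $\psi_\infty$; then fix the translation of $t$ so that $\psi_\infty=0$; then $\psi>0$ follows from strict monotonicity, and $f<\sqrt{1+t^2}$ follows because $G(y):=b^{-1}(y)-f^{-1}(y)$ is strictly increasing with $G(\infty)=\psi_\infty=0$. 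Choosing this normalization is precisely what the paper's selection of $a_i$ accomplishes. It also must precede your argument for (4): without it, $|f-\sqrt{1+t^2}|\to|\psi_\infty|$, which need not vanish, and the indicial analysis is moot. For (4) itself the paper's argument is shorter than your linearization plan: it estimates $\tfrac{1}{v}-\tfrac{1}{u}\lesssim y^{-n-2}$ directly from the explicit first integral and integrates, avoiding the sub/supersolution construction you anticipate needing.
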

\begin{proof}
This proof is a modification of the proof of Lemma 5.1 in \cite{CT}, for readers' convenience, we will include it here.
First we change variables and let $u(y)=f'(t),$ $y=f(t),$ then we have $f''=uu'.$ Thus, \eqref{se1.1}
can be written as
\be\label{se1.2}
\frac{kuu'}{y^{k-1}(1-u^2)^{k/2+1}}+\frac{n-k}{y^k(1-u^2)^{k/2}}=n.
\ee
 Multiplying $y^{n-1}$ on both sides gives
 \[\frac{kuu'y^{n-k}}{(1-u^2)^{k/2+1}}+\frac{(n-k)y^{n-k-1}}{(1-u^2)^{k/2}}=ny^{n-1},\]
 which yields
 \[\lt[y^{n-k}(1-u^2)^{-k/2}\rt]'=ny^{n-1}.\]
 Integrating both sides from $y_0:=f(0)$ to $y$ with initial condition $u(y_0)=f'(0)=0,$ we get
 \be\label{se1.3}
 (1-u^2)^{-k/2}=y^k+\frac{y_0^{n-k}(1-y_0^k)}{y^{n-k}}.
 \ee
 Now, note that $l_k=\lt(\frac{n-k}{n}\rt)^{1/k}$ is a constant solution of \eqref{se1.1}.
 If we choose $l_k<y_0<1,$ then \eqref{se1.3} shows that, there exits some $0\leq u<1$ and $u$ is an increasing function which
 exists for all $y\geq y_0.$ Moreover, it's easy to see that when $y>y_0,$ $f{''}=uu'>0.$ Therefore, we find a solution
 $f$ of \eqref{se1.1} satisfies $0<f'<1,$ $f''>0$ for $t>0$, $f(0)>l_k,$ $f'(0)=0,$
 and $f$ exists for all $t\geq 0.$

 Next, we will construct a sequence $\{f_i\}$ that satisfy $0<f_i'<1,$ {\bf $f''_i>0,$
 $f_i(a_i)>l_k,$ $f'_i(a_i)=0,$} and $f_i$ exists for all $t\geq a_i.$ Moreover, we want {\bf $y_i:=f_i(a_i)\goto l_k$}
 and $a_i\goto -\infty$ as $i\goto+\infty.$ Then we will show that $\{f_i\}$ converges to a function $f$ that satisfies
 $(1), (2), (3),$ and $(4).$

 Let's choose a decreasing sequence $l_k<y_i<1$ such that $y_i\goto l_k.$ Let $f_{[\al, i]}$ denote the solution of
 \eqref{se1.1} with the initial value condition $f_{[\al, i]}(\al)=y_i$ and $f'_{[\al, i]}(\al)=0.$ Furthermore, we require
 $f_{[\al, i]}$ is defined on $[\al, \infty).$ Since $f_{[1, i]}(1)<\sqrt{2}=\sqrt{1+t^2}\big|_{t=1},$
 $f'_{[1, i]}(1)=0<\frac{1}{\sqrt{2}}=(\sqrt{1+t^2})'\big|_{t=1},$ and $\sqrt{1+t^2}$ also satisfies equation \eqref{se1.1}.
 By the Comparison Theorem, we know that $f_{[1, i]}(t)<\sqrt{1+t^2}$ for all $t\geq 1.$ Thus, we may define
 \[a_i=\inf\{\al\leq 1: \sqrt{1+t^2}>f_{[\al, i]}(t), \mbox{for $\forall t\geq \al$}\}.\]
 Let $\{f_i\}$ be the sequence of solutions $f_i(t)=f_{[a_i, i]}(t).$ In the following we will prove $a_i\goto-\infty$ and
 $f_i$ converges to the desired solution $f.$

 \begin{claim}$a_i\goto-\infty$ as $i\goto\infty.$\end{claim}

 To see this, observe that $f_i$ and $\sqrt{1+t^2}$ doesn't have interior contact point. Due to the choice of $a_i$ we know that the contact point is at
 $t\goto\infty.$ Moreover, we have $f_i'<1,$ thus
 \[f_i(1)>\lim\limits_{t\goto\infty}[\sqrt{1+t^2}-(t-1)]=1,\]
 and if $a_i\leq 0,$ then $f_i(0)<1.$

 Now, let $y=y_i+z,$ by the Taylor expansion, \eqref{se1.3} implies
 \be\label{se1.4}
 \begin{aligned}
 1+\frac{k}{2}u^2&\leq 1+\lt[ky_i^{k-1}-\frac{(n-k)(1-y_i^k)}{y_i}\rt]z\\
 &+\lt[\binom{k}{2}y^{k-2}_i+\frac{(1-y_i^k)(n-k)(n-k+1)}{2y_i^2}\rt]z^2+Mz^3,
 \end{aligned}
 \ee
 where $M=M(k, n, y_i)>0$ and $y_i\leq y\leq 1.$ For our convenience, let's denote
 \[\gamma:=ky_i^{k-1}-\frac{(n-k)(1-y_i^k)}{y_i}\,\,\text{and}\,\, \beta:=\binom{k}{2}y_i^{k-2}+\frac{(1-y_i^k)(n-k)(n-k+1)}{2y^2_i},\]
 then \eqref{se1.4} can be written as
 \[1+\frac{k}{2}u^2\leq 1+\gamma z+\beta z^2+Mz^3.\]
 Taking $0<c_i\leq 1$ which satisfies $f_i(c_i)=1,$ and let $b_i(y)=f_i^{-1}(y).$ Applying
 \eqref{se1.4} we get
 \be\label{se1.5}
\begin{aligned}
c_i-b_i(y)&=\int_y^1\frac{dy}{u(y)}\geq\int_{y-y_i}^{1-y_i}\frac{dz}{\sqrt{\frac{2}{k}(\gamma z+\beta z^2+Mz^3)}}\\
&\geq \int_{y-y_i}^{1-y_i}\frac{1}{\gamma+M_2 z}dz\\
&=\frac{1}{M_2}\log\left\{\frac{\gamma+M_2(1-y_i)}{\gamma+M_2(y-y_i)}\right\}.
\end{aligned}
\ee
Here $M_2>0$ is chosen such that $\frac{2}{k}(\gamma z+\beta z^2+Mz^3)<(\gamma+M_2z)^2.$
Now let $a_i=b_i(y_i)$ and note that as $y_i\goto l_k,$ $\gamma\goto 0.$ Therefore,
we have $c_i-a_i\goto\infty.$ This proves the Claim 1.

Notice that, as mentioned above, $f_i$ and $\sqrt{1+t^2}$ don't have interior contact point, and hence, they are
asymptotic. This means that $f'\goto 1$ as $t\goto \infty.$ Since $0<f'_i<1,$ it's easy to see that $f_i(t)>\max\{l_k, t\}$
for $t\in[a_i, \infty).$

\begin{claim} There exists a subsequence of $\{f_i\}$ that converges to a function $f$ that satisfies $(1), (2),$ and $(3).$ \end{claim}
By what we have shown we know
\[\max\{l_k, t\}<f_i(t)<\sqrt{1+t^2}\,\,\text{for}\,\,t\in(a_i, \infty),\]
where $a_i\goto-\infty$ as $i\goto\infty.$

We will choose a subsequence $\{f_{i_l}\}$ such that $a_{i_{l+1}}<a_{i_l}.$
Note that for any interval $[b, d]\subset\R,$ there exists $N_b>0$ such that
whenever $l>N_b,$ we have $a_{i_l}<b$ and
\[\max\{l_k, b\}\leq f_{i_l}(t)\leq\sqrt{1+d^2}\,\,\text{on}\,\,[b, d].\]
Moreover, by \eqref{se1.3} we also have for $l>N_b,$
\[f_{i_l}'(t)\leq C(b, d, l_k)<1\,\,\text{on}\,\,[b, d].\]
Thus we conclude that $\{f_{i_l}\}\goto f$ uniformly on compact set and $f$ satisfies $(1), (2),$ and $(3).$

Finally we will prove that $f$ satisfies $(4).$ First, note that when $y_0=1$ the solution to \eqref{se1.3} is the hyperboloid
$b(t)=\sqrt{1+t^2}$ with corresponding $v(y)=b'(t)$ such that
\be\label{se1.6}
\frac{1}{\sqrt{1-v^2}}=y=b(t).
\ee
Since $f(t)$ is the solution to \eqref{se1.3} with $y_0=l_k$ we get,
\[\frac{d}{dy}\lt(b^{-1}(y)-f^{-1}(y)\rt)=\frac{1}{v}-\frac{1}{u}.\]
For our convenience, let's denote $\e_k:=\frac{l_k^{n-k}-l_k^n}{y^{n-k}},$ then we have
\be\label{se1.7}
\begin{aligned}
\frac{d}{dy}\lt(b^{-1}(y)-f^{-1}(y)\rt)&=\frac{1}{\sqrt{1-y^{-2}}}-\frac{1}{\sqrt{1-(y^k+\e_k)^{-2/k}}}\\
&=\frac{1-\lt(1+\frac{\e_k}{y^k}\rt)^{-2/k}}{y^2\sqrt{1-y^{-2}}{\sqrt{1-(y^k+\epsilon_k)^{-2/k}}}\cdot\lt(\sqrt{1-y^{-2}}+\sqrt{1-(y^k+\e_k)^{-2/k}}\rt)}.
\end{aligned}
\ee
Thus, when $y>0$ large we derive
\[\frac{d}{dy}\lt(b^{-1}(y)-f^{-1}(y)\rt)\leq \frac{C}{y^{n+2}}.\]
Since $f$ is asymptotic to $b$ at $\infty,$ we obtain
\[\lim\limits_{y\goto\infty}b^{-1}(y)-f^{-1}(y)=0.\] Now let $z=b^{-1}(y)$ then we conclude that
\be\label{se1.8}
\begin{aligned}
0&<b(z)-f(z)\leq f^{-1}(y)-b^{-1}(y)\\
&\leq\int_y^\infty\lt(\frac{1}{v}-\frac{1}{u}\rt)dy\leq\frac{C}{y^{n+1}}\leq\frac{C}{z^{n+1}}.
\end{aligned}
\ee
This completes the proof of Lemma \ref{selem1}.
\end{proof}
\bigskip
\bigskip

\section{The properties of Semitroughs}
\label{properties of semitroughs}
So far we obtained an entire function $\z^k$ given by $\z^k(x)=\sqrt{f_k(x_1)^2+|\bx|^2},$ whose graph
has constant $\s_k$ curvature, and
\[D\z^k{\bf(\mathbb{R}^n)}=\lt(\frac{f_kf_k'}{\z}, \frac{\bx}{\z}\rt)=\{\xi\in B_1:\xi_1>0\}:=B_1^+,\]
where $B_1:=\{\xi\in\R^n\mid |\xi|<1\}.$
Moreover, it's easy to see that, for $\theta=(\theta_1,\theta_2,\cdots, \theta_n)\in\mathbb{S}^{n-1}$,
\be\label{cb1.1}
\lim\limits_{r\goto\infty}{\left(\z^k( r\theta)-V_{\bar{B}_1^+}( r\theta)\right)}=
\left\{
\begin{aligned}
&l_k,\,\,\theta\bot\T_0\bar{B}_1^+:=\{\xi\in\bar{B}_1, \xi_1=0\}\text{ and } \theta_1=-1\\
&0,\,\,\text{elsewhere,}
\end{aligned}
\right.
\ee
here following the notation of \cite{CT}, $V_{\bar{E}}(x)=\sup\limits_{\lambda\in\bar{E}}x\cdot\lambda,$ for any
$E\subseteq \bar{B}_1$.
In the following, let's denote by $d_S$ the natural distance on $\dS^{n-1}.$ For any $x, y\in\dS^{n-1}$ we have
\[d_S(x, y)=\arccos(x\cdot y)\in[0, \pi],\]
where the dot stands for the canonical scalar product in $\R^n.$ A ball in $\dS^{n-1}$ is a ball in the metric
space $(\dS^{n-1}, d_S),$ i.e., a set
\[\B=\{x\in\dS^{n-1}\mid d_S(x, x_0)<\delta\},\]
where $x_0\in\dS^{n-1}$ and $\delta>0$ is the radius of $\B,$ also denoted by $\delta(\B).$

Applying Lorentz transformation to $\z^k$
\be\label{cb1.2}
\left\{
\begin{aligned}
x_1'&=\frac{x_1-\al x_{n+1}}{\sqrt{1-\al^2}}\\
x_i'&=x_i\\
x'_{n+1}&=\frac{x_{n+1}-\al x_1}{\sqrt{1-\al^2}},
\end{aligned}
\right.
\ee
we get $\td{\z}^k(x_1',\cdots, x_n')=x_{n+1}'=\frac{\z^k-\al x_1}{\sqrt{1-\al^2}}.$

By a straightforward calculation we obtain, for $i\geq 2,$
\[0=\frac{\T x_1}{\T x_i'}-\al\frac{\T \z^k}{\T x_1}\frac{\T x_1}{\T x_i'}-\al\z^k_i,\]
this gives us
\[\frac{\al\z^k_i}{1-\al\z^k_1}=\frac{\T x_1}{\T x_i'};\]
while for $i=1$
\[\sqrt{1-\al^2}=(1-\al\z^k_1)\frac{\T x_1}{\T x_1'},\]
which implies
\[\frac{\T x_1}{\T x_1'}=\frac{\sqrt{1-\al^2}}{1-\al\z^k_1}.\]

Therefore, we have
\[\frac{\p\td{\z}^k}{\p x_1'}=\frac{(\z^k_1-\al)}{\sqrt{1-\al^2}}\cdot\frac{\sqrt{1-\al^2}}{1-\al\z^k_1}=\frac{\z^k_1-\al}{1-\al\z^k_1},\]
and for $i\geq 2,$
\begin{align*}
\frac{\p\td{\z}^k}{\p x_i'}&=\frac{1}{\sqrt{1-\al^2}}\lt(\z^k_1\frac{\T x_1}{\T x_i'}+\z^k_i-\al\frac{\T x_1}{\T x_i'}\rt)\\
&=\frac{1}{\sqrt{1-\al^2}}\lt[\frac{\al(\z^k_1-\al)\z^k_i}{1-\al\z^k_1}+\z^k_i\rt]\\
&=\frac{\z^k_i\sqrt{1-\al^2}}{1-\al\z^k_1}.\\
\end{align*}
This yields
\[D\td{\z}^k({\bf\mathbb{R}^n})=\lt(\frac{\z^k_1-\al}{1-\al\z^k_1}, \sqrt{1-\al^2}\frac{\z^k_i}{1-\al\z^k_1}\rt):=\{\xi\in B_1, \xi_1>-\al\}.\]
From the above calculations we can see that, for every ball $\bar{\B}$ of $\dS^{n-1},$ there exists an entire function
$\tilde{\z}^k$ defined on $\R^n,$ whose graph is a hypersurface with constant $\s_k$ curvature and the Gauss map image of such hypersurface is the convex
hull of $\bar{\B}.$ We denote this function by $\z^k_{\bar{\B}}$.

The next lemma gathers properties of semitroughs that we will use to construct the barriers (for semitroughs of constant Gauss curvature see \cite{BS}).
\begin{lemm}\label{cblem1}
Let $\bar{\B}$ be a closed ball of $\dS^{n-1}$ such that $\pi-\delta_0\geq\delta(\B)\geq\delta_0>0$ and let
$\z^k_{\bar{\B}}$ be the corresponding semitrough with $\s_k$ curvature equals $\binom{n}{k},$ where $1\leq k\leq n.$
Then the following holds:

(1) Let $g(x)=\sqrt{1+|x|^2},$ $\td{B}$ be the convex hull of $\bar{\B}$ in $B_1,$
and $\p_0\td{B}=\p\td{B}\cap B_1.$ Then
 \be\label{cb1.3}
g\geq \z^k_{\bar{\B}}>V_{\bar{\B}}=V_{\td{B}}.
\ee
Moreover, let $x=r\theta,$ for any fixed $\theta\in\mathbb{S}^{n-1}$, as $r\goto\infty$ we have
\be\label{cb1.4}
\left\{
\begin{aligned}
&\z^k_{\bar{\B}}-V_{\bar{\B}}\goto \frac{l_k}{\sqrt{1-\alpha^2}},\,\,
\text{when {\bf $\theta\notin\bar{\B}$} is perpendicular to $\p_0\td{B},$ }\\
&\z^k_{\bar{\B}}-V_{\bar{\B}}\goto 0,\,\,\text{otherwise,}\\
\end{aligned}
\right.
\ee
where $-1<\alpha<1$ depends on $\delta(\bar{\B}),$
$V_{\bar{\B}}(x)=\sup\limits_{\xi\in\bar{\B}}\xi\cdot x,$ and $V_{\td{B}}(x)=\sup\limits_{\xi\in\td{B}}\xi\cdot x.$

(2) For all compact sets $K\subset \R^n$ there exists $\delta=\delta(K, \delta_0, k, n)>0$ such that for all $x\in K$
\be\label{cb1.6}
\z^k_{\bar{\B}}(x)\geq V_{\bar{\B}}(x)+\delta.
\ee

(3) For all compact sets $K\subset\R^n$ there exists $\upsilon_K=\upsilon(K, \delta_0, k, n)\in (0, 1]$ such that
for all $x, y\in K$
\be\label{cb1.7}
|\z^k_{\bar{\B}}(x)-\z^k_{\bar{\B}}(y)|\leq (1-\upsilon_K)|x-y|.
\ee

(4) Let $\z^1_{\bar{\B}}$ denote the semitrough with $\s_1$ curvature equals $n$, and $\z^k_{\bar{\B}}$ denote the semitrough with
$\s_k$ curvature equals $\binom{n}{k}.$ Then $\z^1_{\bar{\B}}>\z^k_{\bar{\B}}.$
\end{lemm}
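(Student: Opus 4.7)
The plan is to reduce all four properties to the half-ball semitrough $\z^k(x)=\sqrt{f_k(x_1)^2+|\bar{x}|^2}$ by exploiting the Lorentz transform formulas derived just before the lemma, which realize every $\z^k_{\bar{\B}}$ as a Lorentz boost with parameter $\alpha=\alpha(\delta(\bar{\B}))\in(-1,1)$ composed with a spatial rotation. The hyperboloid $g(x)=\sqrt{1+|x|^2}$ is preserved by these transformations (it is the level set of the Minkowski form), and the region $\{x_{n+1}>V_{\bar{\B}}(x)\}$ transforms as a convex cone into the analogous region for the transformed support function, so pointwise comparisons transfer.

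For (1), the upper bound $g\geq\z^k_{\bar{\B}}$ follows from Lemma~\ref{selem1}(3) in the half-ball case (which gives $f_k(x_1)^2\leq 1+x_1^2$, hence $\z^k\leq g$) combined with the Lorentz invariance just noted. The strict lower bound $\z^k_{\bar{\B}}>V_{\bar{\B}}$ is verified directly in the half-ball case: $V_{\bar{B}_1^+}(x)$ equals $|x|$ for $x_1\geq 0$ and $|\bar{x}|$ for $x_1<0$, and Lemma~\ref{selem1}(3) gives $f_k(x_1)>\max\{l_k,x_1\}$, from which $\z^k>V_{\bar{B}_1^+}$ follows by comparing squares in each case; the equality $V_{\bar{\B}}=V_{\td{B}}$ is automatic since a support function is invariant under taking the convex hull. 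For the asymptotic \eqref{cb1.4}, in the half-ball case the unique direction $\theta\notin\bar{B}_1^+$ perpendicular to $\p_0\td{B}$ is $\theta=-e_1$, where $\z^k(-re_1)-V_{\bar{B}_1^+}(-re_1)=f_k(-r)\goto l_k$ by Lemma~\ref{selem1}(2); all other directions give limit $0$ by Lemma~\ref{selem1}(4). The factor $1/\sqrt{1-\alpha^2}$ for general $\bar{\B}$ emerges from the rescaling of the vertical coordinate under the Lorentz boost.

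For (2) and (3), both statements are compactness arguments on the parameter space $\{\bar{\B}:\delta_0\leq\delta(\bar{\B})\leq\pi-\delta_0\}\cong\dS^{n-1}\times[\delta_0,\pi-\delta_0]$, which is compact in the Hausdorff topology, with $\bar{\B}\mapsto\z^k_{\bar{\B}}$ continuous in $C^1_{\mathrm{loc}}$ since the Lorentz parameter and rotation depend continuously on $\bar{\B}$. By (1), $\z^k_{\bar{\B}}(x)-V_{\bar{\B}}(x)$ is continuous and strictly positive on $K\times\{\bar{\B}\}$, hence attains a positive minimum $\delta$, yielding (2). Similarly the spacelike condition gives $|D\z^k_{\bar{\B}}|<1$ pointwise, and continuity plus compactness promote this to a uniform bound $|D\z^k_{\bar{\B}}|\leq 1-\upsilon_K$ on $K$; integrating along segments inside a convex ball $B_R\supset K$ (on which the gradient bound still holds after slightly enlarging $K$) yields (3).

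For (4), Lorentz reduction brings the claim to showing $f_1(t)>f_k(t)$ for all $t\in\R$ when $k\geq 2$. At $-\infty$, $f_1\goto l_1$ and $f_k\goto l_k$, with $l_1>l_k$ since $l_k=((n-k)/n)^{1/k}$ is strictly decreasing in $k$ by a short elementary check; at $+\infty$ both converge to $\sqrt{1+t^2}$ by Lemma~\ref{selem1}(4). The interior argument exploits MacLaurin's inequality $(\s_k/\binom{n}{k})^{1/k}\leq\s_1/n$, which turns $\z^k_{\bar{\B}}$ into a subsolution of the CMC equation $\s_1=n$: at a putative interior positive maximum $x_0$ of $\z^k_{\bar{\B}}-\z^1_{\bar{\B}}$, the inequalities $D\z^k_{\bar{\B}}=D\z^1_{\bar{\B}}$ and $D^2\z^k_{\bar{\B}}\leq D^2\z^1_{\bar{\B}}$ force $\ka^k\leq\ka^1$ in matrix order, so $n=\s_1[\z^1_{\bar{\B}}]\geq\s_1[\z^k_{\bar{\B}}]\geq n$ by MacLaurin, and equality throughout forces all principal curvatures of $\z^k_{\bar{\B}}$ to equal $1$ at $x_0$. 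The main obstacle is converting this infinitesimal rigidity into a global contradiction with the strictly positive asymptotic gap $(l_1-l_k)/\sqrt{1-\alpha^2}$ in the tricky direction; this requires either a strong Hopf-type maximum principle for the $\s_k$ equation (to propagate the contact to an open set on which $\z^k_{\bar{\B}}$ must also be CMC, contradicting its $\s_k$ equation at a nearby non-umbilic point) or a direct ODE comparison using the integrated first-order relation \eqref{se1.3} between $f_k$ and its derivative to rule out any crossing.
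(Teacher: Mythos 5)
For parts (1)--(3) your approach is essentially the paper's: reduce to the standard half-ball semitrough via Lorentz invariance (noting $g$ is preserved), then verify the half-ball case directly from Lemma~\ref{selem1}. Your compactness argument for (2) and (3) is a more explicit version of what the paper leaves terse ("part (2) can be derived from part (1); part (3) is due to $\z^k_{\bar{\B}_+}$ is spacelike"), and is fine.

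For part (4), however, you have explicitly acknowledged a gap, and your first proposed route would not close it as stated. The difficulty is not just the "infinitesimal rigidity" step: before you can apply a maximum principle to $\z^k_{\bar{\B}}-\z^1_{\bar{\B}}$, you need a positive interior maximum to exist, but on $\R^n$ this difference tends to $0$ in all non-exceptional directions and to a \emph{negative} constant in the exceptional one, so even if $\z^k_{\bar{\B}}>\z^1_{\bar{\B}}$ somewhere there is no a priori attained maximum. The paper sidesteps the whole PDE maximum-principle framework by exploiting the $SO(n-1,1)$ symmetry: after the Lorentz reduction, $\z^1_{\bar{\B}_+}>\z^k_{\bar{\B}_+}$ is equivalent to the scalar inequality $f_1(t)>f_k(t)$. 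Maclaurin's inequality shows $f_1$ satisfies $\frac{kf_1''}{f_1^{k-1}(1-f_1'^2)^{k/2+1}}+\frac{n-k}{f_1^k(1-f_1'^2)^{k/2}}\leq n$, i.e.\ $f_1$ is a one-variable supersolution of the ODE \eqref{se1.1} that $f_k$ solves; the boundary data at $\pm\infty$ give $f_1-f_k>0$ at $-\infty$ (since $l_1>l_k$) and $f_1-f_k\to 0$ at $+\infty$; the ODE comparison theorem then forces $f_1>f_k$ everywhere. This is precisely the second of the two alternatives you named, and it is both shorter and avoids the non-compactness issue. You should replace the hypersurface-level maximum-principle sketch with this 1D comparison to complete the argument.
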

\begin{proof}
Notice that, since $g(x)$ is invariant under the Lorentzian transform, we only need to look at these assertions for the standard semitrough $\z^k_{\bar{\B}_+}$.
For part (1) to part (3), the constant Gauss curvature case has been proved in \cite{BS}.

We first prove (1). When $\frac{x}{|x|}\in\bar{\B}_+:=\lt\{\xi\mid|\xi|=1, \xi_1\geq 0\rt\},$ by Lemma \ref{selem1} we have
\begin{align*}
\z^k_{\bar{\B}_+}(x)-V_{\bar{\B}_+}(x)&=\sqrt{f^2_k(x_1)+|\bar{x}|^2}-|x|\\
&=\frac{f^2_k(x_1)-x_1^2}{\sqrt{f^2_k(x_1)+|\bar{x}|^2}+|x|}>0.
\end{align*}
When $\frac{x}{|x|}\notin\bar{\B}_+,$ we have
\[\z^k_{\bar{\B}_+}(x)-V_{\bar{\B}_+}(x)=\frac{f^2_k(x_1)}{\sqrt{f^2_k(x_1)+|\bar{x}|^2}+|\bar{x}|}>0.\]
It's easy to see that equations \eqref{cb1.3} and \eqref{cb1.4} follow through directly.

Part (2) can be derived from part (1); part (3) is due to $\z^k_{\bar{\B}_+}$ is spacelike. Thus, we only need to show part (4).
Let $\z^1_{\bar{\B}_+}(x)=\sqrt{f_1^2(x_1)+|\bx|^2},$ then by the proof of Lemma 5.1 in \cite{CT}, we know $f_1$ is the solution of
\[\frac{f_1''}{(1-f_1'^2)^{3/2}}+\frac{(n-1)}{f_1(1-f_1'^2)^{1/2}}=n.\]
Let $\z^k_{\bar{\B}_+}(x)=\sqrt{f^2_k(x_1)+|\bx|^2},$ then by Maclaurin's inequality and the proof of Lemma \ref{selem1} we get
\[\frac{kf_1''}{f_1^{k-1}(1-f_1'^2)^{k/2+1}}+\frac{n-k}{f_1^k(1-f_1'^2)^{k/2}}\leq n
 =\frac{kf_k''}{f_k^{k-1}(1-f_k'^2)^{k/2+1}}+\frac{n-k}{f_k^k(1-f_k'^2)^{k/2}}.\]
Moreover, in view of Lemma 5.1 of \cite{CT} and Lemma \ref{selem1}, we also have
$\lim\limits_{t\goto-\infty}(f_1(t)-f_k(t))=\lt(1-\frac{1}{n}\rt)-(1-\frac{k}{n})^{1/k}>0,$
and $\lim\limits_{t\goto\infty}(f_1(t)-f_k(t))=0.$
By the Comparison Theorem we conclude that $f_1(t)>f_k(t)$ for all $t.$
This completes the proof of part (4).
\end{proof}

\bigskip

\section{Constructions of barrier functions and spacelike cutoff function}
\label{construction}
\setcounter{equation}{0}
In this section, we will construct the strictly convex subsolution, supersolution, and spacelike cutoff function that satisfy the desired asymptotic behavior at infinity. We will first construct them for the case when the prescribed Gauss map image is the half disc
$\bar{B}_1^+:=\{\xi\in\R^n\mid |\xi|\leq 1, \xi_1\geq 0\}.$ Then we will construct them for general prescribed Gauss map image $\conv(\F).$
\subsection{Construction of the subsolution}
\label{constructionsub}
In the following, we first consider $\F=\bar{\B}_+.$ We also assume the boundary perturbation $q$ satisfies the following conditions.
\be\label{q.1}
q\in C^{2,1}_{\text{loc}}\lt(\R^n\setminus\{0\}\rt)\,\,\mbox{and $q(x)=q\lt(\frac{x}{|x|}\rt)$},
\ee
\be\label{q.2}
q(x)\equiv 0\,\,\mbox{when $x\in \dS^{n-1}\setminus\B_+.$}
\ee
By \eqref{q.1} we know there exists $a_2>0$ such that for any $x, y\in\dS^{n-1}$ we have,
\be\label{q.3}
|q(x)-q(y)-Dq(y)\cdot(x-y)|\leq a_2|x-y|^2=-2a_2y\cdot(x-y).
\ee

Now for $y\in \dS^{n-1},$ by \eqref{q.1} it's easy to see that
$y\cdot Dq(y)=0.$
Let $p(y)=Dq(y)+My,$ where $M>2a_2$ is a constant, and
\begin{align*}
\tz^k_{\bar{\B}_+}(x, y)&=q(y)-M+\z^k_{\bar{\B}_+}(x+p(y))\\
&=q(y)-M+\sqrt{f_k^2(x_1+p_1)+\sum\limits_{i=2}^n(x_i+p_i)^2}.\\
\end{align*}
Here $\z^k_{\bar{\B}_+}$ is the standard semitrough with $\s_k$ curvature equals $\binom{n}{k}$, and $f_k$ is the solution of \eqref{se1.1}
that satisfies the properties (1)--(4) in Lemma \ref{selem1}.

\begin{lemm}
\label{addlem4.1}
 If as $|x|\goto\infty$, $V_{\bar{\B}_+}(x)\goto\infty,$ then we have
$\sup\limits_{y\in\dS^{n-1}}\tz^k_{\bar{\B}_+}(x, y)-V_{\bar{\B}_+}(x)\goto q\lt(\frac{x}{|x|}\rt).$
\end{lemm}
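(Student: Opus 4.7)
The plan is to show that the supremum over $y\in\dS^{n-1}$ is realized, asymptotically, at the maximizer $y^*(x)\in\bar{\B}_+$ of the linear functional $\xi\mapsto\xi\cdot x$, namely $y^*(x)=x/|x|$ when $x_1\geq 0$ and $y^*(x)=(0,\bx/|\bx|)$ when $x_1<0$. In both regimes $q(y^*(x))=q(x/|x|)$: either $y^*(x)=x/|x|$ directly, or $y^*(x)$ and $x/|x|$ both lie in $\dS^{n-1}\setminus\B_+$, where $q\equiv 0$ by \eqref{q.2}.

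The first step is to reduce from $\z^k_{\bar{\B}_+}$ to the support function $V_{\bar{\B}_+}$. Because $V_{\bar{\B}_+}$ is $1$-Lipschitz and $|p(y)|$ is uniformly bounded on $\dS^{n-1}$, the hypothesis $V_{\bar{\B}_+}(x)\to\infty$ forces $V_{\bar{\B}_+}(x+p(y))\to\infty$ uniformly in $y$, and a direct calculation from the formula $\z^k_{\bar{\B}_+}(z)=\sqrt{f_k(z_1)^2+|\bar z|^2}$ in Lemma \ref{selem1} gives $\z^k_{\bar{\B}_+}(z)-V_{\bar{\B}_+}(z)=O(1/V_{\bar{\B}_+}(z))$. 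Hence
\[
\tz^k_{\bar{\B}_+}(x,y)-V_{\bar{\B}_+}(x)=q(y)-M+\bigl[V_{\bar{\B}_+}(x+p(y))-V_{\bar{\B}_+}(x)\bigr]+o(1)
\]
uniformly in $y$. Next I would linearize the support function: from the explicit form $V_{\bar{\B}_+}(z)=\sqrt{\max(0,z_1)^2+|\bar z|^2}$ one checks that
\[
V_{\bar{\B}_+}(x+p(y))-V_{\bar{\B}_+}(x)=\xi^*(x)\cdot p(y)+o(1)
\]
uniformly in $y$, where $\xi^*(x)=y^*(x)$ is the $0$-homogeneous maximizer above.

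Substituting $p(y)=Dq(y)+My$ and using $Dq(y)\cdot y=0$ (from the $0$-homogeneity \eqref{q.1} of $q$), the problem reduces to the pointwise inequality
\[
q(y)+\xi^*(x)\cdot Dq(y)-M\bigl(1-\xi^*(x)\cdot y\bigr)\leq q(\xi^*(x))+o(1).
\]
Applying \eqref{q.3} with $\xi^*(x)$ in place of $x$ gives
\[
q(\xi^*(x))=q(y)+Dq(y)\cdot(\xi^*(x)-y)+R,\qquad |R|\leq 2a_2\bigl(1-\xi^*(x)\cdot y\bigr),
\]
and combining with $Dq(y)\cdot y=0$ rewrites the left side above as $q(\xi^*(x))-R-M(1-\xi^*(x)\cdot y)$. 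Since $M>2a_2$ and $1-\xi^*(x)\cdot y\geq 0$, the bracketed correction is bounded above by $(2a_2-M)(1-\xi^*(x)\cdot y)\leq 0$, yielding the upper bound. Evaluating at $y=\xi^*(x)$, where both $R=0$ and $1-\xi^*(x)\cdot y=0$, saturates it and produces the matching lower bound, giving $\sup_y\tz^k_{\bar{\B}_+}(x,y)-V_{\bar{\B}_+}(x)\to q(\xi^*(x))=q(x/|x|)$.

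The main obstacle is the linearization step, since $V_{\bar{\B}_+}$ is only piecewise smooth (it creases across $\{z_1=0\}$). I would handle it by splitting into the regimes $x_1\geq 0$ and $x_1<0$ and verifying in each, via the explicit formula for $V_{\bar{\B}_+}$, that $V_{\bar{\B}_+}(x+p(y))-V_{\bar{\B}_+}(x)=\xi^*(x)\cdot p(y)+O(1/V_{\bar{\B}_+}(x))$ with error uniform in $y$. The crease itself is harmless because $\xi^*$ is continuous across $\{x_1=0\}$ away from the $-e_1$ axis, and that axis is excluded by the hypothesis $V_{\bar{\B}_+}(x)\to\infty$.
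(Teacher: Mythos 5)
Your proof is correct, and the algebraic heart of it is identical to the paper's: after reducing to the support function, you arrive at the inequality $q(\xi^*) - [q(y) + \xi^*\cdot Dq(y) - M(1-\xi^*\cdot y)] \geq (M-2a_2)(1-\xi^*\cdot y) \geq 0$, which is exactly the computation in \eqref{sub1.2} and \eqref{sub1.4}. The presentation differs in a way worth noting: the paper splits into three cases according to whether $x/|x|$ lies in $\B_+$, on $\p\B_+$, or outside $\bar{\B}_+$ with $V_{\bar{\B}_+}(x/|x|)\neq 0$, computes the radial limit $G(y)$ explicitly in each (using the WLOG forms $x = r\cos\beta\, e_1 + r\sin\beta\, e_n$, $x=re_n$, etc.), and then proves $G(y)\leq G(y^*)$; whereas you fold all three cases into a single linearization $V_{\bar{\B}_+}(x+p(y))-V_{\bar{\B}_+}(x)=\xi^*(x)\cdot p(y)+O(1/V_{\bar{\B}_+}(x))$, together with the estimate $\z^k_{\bar{\B}_+}-V_{\bar{\B}_+}=O(1/V_{\bar{\B}_+})$, both uniform in $y$. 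This is slightly cleaner and has the advantage of making the uniformity of the convergence in $y$ fully explicit (the paper's fix-$y$-then-send-$r\to\infty$ argument implicitly relies on this, but doesn't state it). One small bookkeeping point you handle correctly but should keep visible in a write-up: when $x_1<0$, the maximizer $\xi^*(x)=(0,\bx/|\bx|)$ and the direction $x/|x|$ are different points of $\dS^{n-1}$, and the identity $q(\xi^*(x))=q(x/|x|)$ holds only because both lie in $\dS^{n-1}\setminus\B_+$ where $q\equiv 0$ by \eqref{q.2}.
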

\begin{proof} We will divide this proof into 3 cases.

\textbf{Case 1.} When $\frac{x}{|x|}\in\B_+,$ without loss of generality we assume
\[x=(r\cos\beta, 0, \cdots, 0, r\sin\beta)=r\cos\beta e_1+r\sin\beta e_n\]
and $\cos\beta>0.$ We also assume $\sin\beta\geq 0.$ Then as $r\goto\infty$ we get,
\be\label{sub1.1}
\begin{aligned}
&\tz^k_{\bar{\B}_+}(x, y)-V_{\bar{\B}_+}(x)\\
&\goto q(y)-M+\sqrt{1+(r\cos\beta+p_1)^2+(r\sin\beta+p_n)^2+\sum\limits_{i=2}^{n-1}p_i^2}-r\\
&\goto q(y)-M+\cos\beta p_1+\sin\beta p_n\\
&=q(y)-M+\lt<\frac{x}{|x|}, p(y)\rt>:=G(y)
\end{aligned}
\ee
By our assumption \eqref{q.3} we have
\begin{align*}
&q\lt(\frac{x}{|x|}\rt)-q(y)\\
&\geq 2a_2y\cdot\lt(\frac{x}{|x|}-y\rt)+Dq(y)\cdot\lt(\frac{x}{|x|}-y\rt).
\end{align*}
Therefore, we obtain
\be\label{sub1.2}
\begin{aligned}
&G\lt(\frac{x}{|x|}\rt)-G(y)\\
&=q\lt(\frac{x}{|x|}\rt)-q(y)+M-\lt<\frac{x}{|x|}, Dq(y)+My\rt>\\
&\geq 2a_2y\cdot\lt(\frac{x}{|x|}-y\rt)+M\lt(1-\frac{x}{|x|}\cdot y\rt)\\
&=(M-2a_2)\lt(1-\frac{x}{|x|}\cdot y\rt)\geq 0.
\end{aligned}
\ee

\textbf{Case 2.} When $\frac{x}{|x|}\in\p\B_+,$ without loss of generality we assume
$x=(0, \cdots, 0, r)=re_n.$ Then as $r\goto\infty$ we have,
\be\label{sub1.3}
\begin{aligned}
&\tz^k_{\bar{\B}_+}(x, y)-V_{\bar{\B}_+}(x)\\
&=q(y)-M+\sqrt{f_k^2(p_1)+(r+p_n)^2+\sum\limits_{i=2}^{n-1}p_i^2}-r\\
&\goto q(y)-M+\lt<e_n, p(y)\rt>:=G(y).
\end{aligned}
\ee
Same as before we can see that
\be\label{sub1.4}
\begin{aligned}
&G(e_n)-G(y)\\
&=q(e_n)-q(y)+M-\lt<e_n, Dq(y)+My\rt>\\
&\geq 2a_2y\cdot(e_n-y)+Dq(y)\cdot(e_n-y)+M-\lt<e_n, Dq(y)+My\rt>\\
&=(M-2a_2)(1-e_n\cdot y)\geq 0.
\end{aligned}
\ee

\textbf{Case 3.} When $\frac{x}{|x|}\notin\bar{\B}_+$ and $V_{\bar{\B}_+}\lt(\frac{x}{|x|}\rt)\neq 0,$ i.e., $\frac{x}{|x|}$ is not perpendicular to
$\p_0B_1^+.$ Without loss of generality we assume
$x=(r\cos\beta, 0, \cdots, 0, r\sin\beta)$ where $\cos\beta<0$ and $\sin\beta>0.$
Then as $r\goto\infty$ we have,
\be\label{sub1.5}
\begin{aligned}
&\tz^k_{\bar{\B}_+}(x, y)-V_{\bar{\B}_+}(x)\\
&\goto q(y)-M+\sqrt{l_k^2+(r\sin\beta+p_n)^2+\sum\limits_{i=2}^{n-1}p_i^2}-r\sin\beta\\
&\goto q(y)-M+\lt<e_n, p(y)\rt>:=G(y).
\end{aligned}
\ee
From the discussion in \eqref{sub1.4} we have
$G(y)\leq G(e_n).$

Note that when $V_{\bar{\B}_+}\lt(\frac{x}{|x|}\rt)\goto0$ and $V_{\bar{\B}_+}(x)\goto\infty$ as $|x|\goto\infty$ follow the same discussion of case 3, we can show
that $G(y)\leq G(e_n).$ This completes the proof of Lemma \ref{addlem4.1}.
\end{proof}

Now consider the case when $V_{\bar{\B}_+}\lt(\frac{x}{|x|}\rt)\goto0$ and $V_{\bar{\B}_+}(x)<\infty$ as $|x|\goto\infty.$ Without loss of generality we assume
$x=(x_1, 0, \cdots, 0, x_n)$ where $x_1\goto-\infty$ and $x_n\geq 0$ is bounded.
As $|x|\goto\infty,$ we have
\be\label{sub1.6}
\begin{aligned}
&\tz^k_{\bar{\B}_+}(x, y)-V_{\bar{\B}_+}(x)\\
&\goto q(y)-M+\sqrt{l_k^2+(x_n+p_n)^2+\sum\limits_{i=2}^{n-1}p_i^2}-x_n\\
&\leq C,
\end{aligned}
\ee
where $C=C(|q|_{C^1}, n, k, M)>0$ is a bounded constant.

Let
$\lu^k_{\bar{\B}_+}(x)=\sup\limits_{y\in\mathbb{S}^{n-1}}\tz^k_{\bar{\B}_+}(x, y),$ it's easy to see that
$\lu^k_{\bar{\B}_+}$ is a subsolution to the equation $\sigma_k(\ka[\M_u])={n\choose k}$. Moreover, from above discussions we conclude that
 when $V_{\bar{\B}_+}(x)\goto\infty$ as $|x|\goto\infty,$ we have $\lu^k_{\bar{\B}_+}(x)-V_{\bar{\B}_+}(x)\goto q\lt(\frac{x}{|x|}\rt).$  We need to understand the asymptotic behavior of $\lu^k_{\bar{\B}_+}(x)$ as $|x|\goto\infty$ when $V_{\bar{\B}_+}(x)<\infty.$

\begin{lemm}
\label{sub-tricky direction lem}
If as $|x|\goto\infty,$ $V_{\bar{\B}_+}\lt(\frac{x}{|x|}\rt)\goto0$ and $V_{\bar{\B}_+}(x)<\infty,$ then we have
$\sup\limits_{y\in\dS^{n-1}}\td{\z}^k_{\bar{\B}_+}(x, y)\leq-M+\sqrt{l_k^2+(V_{\bar{\B}_+}(x)+M)^2.}$
\end{lemm}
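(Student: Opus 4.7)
Under the hypothesis of the lemma we have $x_1\to-\infty$ while $|\bar{x}|=V_{\bar{\B}_+}(x)$ remains bounded. Since $p_1(y)=(Dq)_1(y)+My_1$ is bounded uniformly in $y$, the quantity $x_1+p_1(y)\to-\infty$ uniformly, and by Lemma \ref{selem1} one has $f_k(x_1+p_1(y))\to l_k$ uniformly in $y\in\dS^{n-1}$. Consequently
\[
\sup_{y\in\dS^{n-1}}\tz^k_{\bar{\B}_+}(x,y)=-M+\sup_{y\in\dS^{n-1}}\Big[q(y)+\sqrt{l_k^2+|\bar{x}+\bar{p}(y)|^2}\Big]+o(1),
\]
so it suffices to prove the asymptotic inequality $q(y)+\sqrt{l_k^2+|\bar{x}+\bar{p}(y)|^2}\leq\sqrt{l_k^2+(|\bar{x}|+M)^2}$ for every $y\in\dS^{n-1}$. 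Geometrically, equality is attained in the limit at $y=\eta:=(0,\bar{x}/|\bar{x}|)\in\p\B_+$, where $q(\eta)=0$, $Dq(\eta)=0$, and $|\bar{x}+\bar{p}(\eta)|=|\bar{x}|+M$; so the claim amounts to saying that $\eta$ is the asymptotic maximizer. For $y\notin\B_+$, the hypothesis $q\in C^{2,1}_0(\F)$ gives $q(y)=0$ and $Dq(y)=0$, so $\bar{p}(y)=M\bar{y}$ with $|\bar{y}|\leq 1$, and the bound follows from $|\bar{x}+M\bar{y}|\leq|\bar{x}|+M$.

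The main case is $y\in\B_+$. I would apply \eqref{q.3} centered at $\eta$, using $q(\eta)=0$ together with $Dq(y)\cdot y=0$ so that $Dq(y)\cdot(\eta-y)=\bar{e}_x\cdot\bar{Dq}(y)$, to obtain the refined Taylor bound
\[
q(y)\leq 2a_2\, s^2-\bar{e}_x\cdot\bar{Dq}(y),\qquad s^2:=1-\eta\cdot y.
\]
Setting $R:=|\bar{x}|+M$ and $\bar{w}:=\bar{p}(y)-M\bar{e}_x=\bar{Dq}(y)+M(\bar{y}-\bar{e}_x)$, a direct expansion yields
\[
|\bar{x}+\bar{p}(y)|^2-R^2=2R\bigl(\bar{e}_x\cdot\bar{Dq}(y)-Ms^2\bigr)+|\bar{w}|^2,
\]
with $|\bar{w}|^2=O(s^2)$ by the Lipschitz continuity of $Dq$ (a consequence of $q\in C^{2,1}$ and $Dq(\eta)=0$) together with $|\bar{y}-\bar{e}_x|\leq|y-\eta|=\sqrt{2}\,s$. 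Plugging this into the identity
\[
\sqrt{l_k^2+|\bar{x}+\bar{p}|^2}-\sqrt{l_k^2+R^2}=\frac{|\bar{x}+\bar{p}|^2-R^2}{\sqrt{l_k^2+|\bar{x}+\bar{p}|^2}+\sqrt{l_k^2+R^2}},
\]
whose denominator is comparable to $2R$ for $R$ large, the linear-in-$\bar{Dq}(y)$ contributions cancel after combining with the Taylor bound, yielding
\[
q(y)+\sqrt{l_k^2+|\bar{x}+\bar{p}|^2}-\sqrt{l_k^2+R^2}\leq -(M-2a_2)s^2+O(s^2/R),
\]
which is nonpositive once $R$ is sufficiently large, since $M>2a_2$.

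The main obstacle is the cancellation of the linear-in-$\bar{e}_x\cdot\bar{Dq}(y)$ contributions. Without the refined Taylor bound $q(y)\leq 2a_2\, s^2-\bar{e}_x\cdot\bar{Dq}(y)$, which uses both $q(\eta)=0$ and $Dq(\eta)=0$ (guaranteed by $\eta\in\p\F$ and $q\in C^{2,1}_0(\F)$), one would be left with an uncontrolled $O(s)$ residual near $y=\eta$ and the inequality would fail; this is why the paper requires $C^{2,1}$ rather than merely $C^2$ regularity of $q$, while the choice $M>2a_2$ is precisely what makes the surviving quadratic-in-$s$ term have the correct sign.
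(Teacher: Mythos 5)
Your proposal has the right high-level intuition (Taylor-expand around the boundary point $\eta$, exploit $q(\eta)=Dq(\eta)=0$ and the choice $M>2a_2$, and dispatch $y\notin\B_+$ trivially), but the error analysis is wrong, and the gap is real.

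The decisive problem is the claim that $|\bar{w}|^2/D = O(s^2/R)$. You compute $|\bar{w}|^2\leq 2(L+M)^2s^2$, whose constant is of order $M^2$. Since $D\geq R\geq M$, the best you get is $|\bar{w}|^2/D\lesssim Ms^2$, which is the \emph{same order} as the main term $-(M-2a_2)s^2$, not a lower-order correction. In fact the $M^2$ piece of $|\bar{w}|^2$, namely $M^2|\bar{y}-\bar{e}_x|^2 = M^2(2s^2-y_1^2)$, nearly exactly cancels the negative contribution $-2RMs^2$ when $x_n$ is small. Concretely, take $x_n=0$ (so $R=M$) and $y=(y_1,\tilde{y},y_n)$ with $y_1$ small and positive, $y_n\approx 0$, $|\tilde{y}|\approx 1$: then $s^2\approx 1$, $q_n\approx 0$, $|\bar{w}|^2\approx 2M^2$, and $\frac{2R(q_n-Ms^2)}{D}+\frac{|\bar{w}|^2}{D}\approx -M+M=0$, so your bound degenerates to $\approx 2a_2>0$ and shows nothing (while the lemma's inequality does hold there, with near-equality). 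The "cancellation of the linear $q_n$ terms" is also only approximate, leaving a residual $q_n\bigl(\tfrac{2R}{D}-1\bigr)$; since your Lipschitz argument only gives $|q_n|\leq\sqrt{2}L\,s$, this residual is $O(s)$, not $O(s^2/R)$.

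The paper's proof avoids both pitfalls. It keeps the exact expansion $A^2-B^2 = 2x_n(q_n-Ms^2)+|Dq|^2-p_1^2$, so that the $M^2$ pieces of $|\bar{w}|^2$ recombine into $-p_1^2$, with $p_1^2\geq(M-a_1)^2y_1^2$. This gives a negative term that is quadratic in $y_1$ (not in $s$), and it is precisely this $y_1$-coercivity that saves the estimate when $x_n$ is small. The paper also needs, and proves by a nontrivial L'Hopital computation using both $q_n|_S=0$ and $q_{n1}|_S=0$, the sharper bound $|q_n|\leq a_4y_1^2$; your estimate $|q_n|=O(|y-\eta|)=O(s)$ is strictly weaker whenever $|y_1|\ll|y-\eta|$, i.e., when $y$ is near the equator $S$ but far from $\eta$. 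Finally, the paper's case split $0\leq x_n\leq 10M$ versus $x_n>10M$ reflects a genuine dichotomy between which term dominates; the heuristic "nonpositive once $R$ is sufficiently large" does not suffice, since the inequality must hold for all $x_n\geq 0$, i.e., for $R$ as small as $M$.
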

\begin{proof}
Without loss of generality, let's assume $x=(x_1, 0, \cdots, 0, x_n),$ where $x_1\goto-\infty$ and $x_n\geq 0$ is bounded.
In view of \eqref{q.1} and \eqref{q.2}, we know that there exists $a_0, a_1>0$ such that
\be\label{std1}
|q(y)|\leq a_0y_1^2, |Dq(y)|\leq a_1|y_1|\,\,\mbox{for $y\in\bar{\B}_+,$}
\ee
and $|q(y)|_{C^2}\equiv 0$ for $y\in\dS^{n-1}\setminus\bar{\B}_+.$
It's easy to see that when $y\in\dS^{n-1}\setminus\bar{\B}_+$
\be\label{std2}
\td{\z}^k_{\bar{\B}_+}(x, y)\leq-M+\sqrt{l_k^2+(x_n+M)^2},
\ee
where the equality is achieved at $y=(0,\cdots, 0, 1).$
Therefore, we only need to show when $y_1>0$ the above inequality \eqref{std2} still holds.
By \eqref{std1} we know that this is equivalent to show
\[a_0y_1^2+\sqrt{l_k^2+|Dq|^2+M^2-p_1^2+x_n^2+2p_nx_n}-\sqrt{l_k^2+(x_n+M)^2}\leq 0.\]

By \eqref{std1} we have $p_1^2=(q_1+My_1)^2\geq(M-a_1)^2y_1^2.$ In the following, we need to estimate $|p_n|.$
Suppose $\{e_1,e_2,\cdots,e_{n-1}\}$ is an orthonormal frame on the unit sphere $\dS^{n-1}$. Suppose $\{x_1,x_2,\cdots,x_n \}$ is the coordinate of $\R^n$ and $r$ is the polar radius, then for  $l=1,\cdots, n$, we let
$$\frac{\p}{\p x_l}=\sum_{i=1}^{n-1}a_l^ie_i+b_l\frac{\p}{\p r},$$ where $\sum_i(a_l^i)^2+b_l^2=1$.
We denote the boundary of $\mathcal{B}_+$ to be $S$, which is an $(n-2)$-dimensional sphere. Note that $Dq=(\frac{\p q}{\p x_1},\frac{\p q}{\p x_2},\cdots, \frac{\p q}{\p x_n})$, we have
$$\left.\frac{\p q}{\p x_n}\right|_{S}=\sum\limits_{i=1}^{n-1}a_n^ie_i(q)|_S+b_n\frac{\p q}{\p r}=0,$$ and
$$\left.\frac{\p^2 q}{\p x_1\p x_n}\right|_S=\sum\limits_{i,j=1}^{n-1}a_1^je_j(a_n^ie_1(q))|_S+b_1|_S\left.\frac{\p \sum_i a_n^ie_i(q)}{\p r}\right|_S=0.$$ Here we have used $b_1|_S\equiv0$.  Thus, using L'Hospital rule, we get
\begin{eqnarray}
\lim_{y_1\goto 0+} \frac{q_n(y_1,\omega\sqrt{1-y_1^2})}{y_1^2}&=&\lim_{y_1\goto 0+}\frac{q_{n1}+\sum_{i=2}^nq_{ni}\omega_i\frac{-y_1}{\sqrt{1-y_1^2}}}{2y_1}\nonumber\\
&=&\lim_{y_1\goto 0+}\frac{q_{n1}}{2y_1}-\sum_{i=2}^nq_{ni}\omega_i=\frac{q_{n11}}{2}-\sum_{i=2}^nq_{ni}\omega_i,\nonumber
\end{eqnarray}
where $\omega=(\omega_2, \cdots, \omega_n)\in S.$
Therefore, there exist some constants $\delta, a_3>0$ such that when $y_1<\delta$
$$|q_n|\leq a_3 y_1^2.$$
On the other hand, it's easy to see that when $y_1\geq\delta$, we have $$|q_n|\leq |Dq|\leq a_1y_1\leq \frac{a_1}{\delta}y_1^2.$$
We conclude that when $y_1\in [0,1],$ there exists a constant $a_4>0$ such that $|q_n|\leq a_4y_1^2.$

This implies
\be\label{std3}
\begin{aligned}
&\sqrt{l_k^2+(x_n+M)^2}-\sqrt{l_k^2+|Dq|^2+M^2-p_1^2+x_n^2+2p_nx_n}\\
&=\frac{2x_nM-|Dq|^2+p_1^2-2p_nx_n}{\sqrt{l_k^2+(x_n+M)^2}+\sqrt{l_k^2+\sum\limits_{i=2}^{n-1}p_i^2+(x_n+p_n)^2}}\\
&\geq\frac{2x_nM-a_1^2y_1^2+(M-a_1)^2y_1^2-2a_4y_1\sqrt{1-y_n^2}x_n-2M|y_n|x_n}
{\sqrt{l_k^2+(x_n+M)^2}+\sqrt{l_k^2+\sum\limits_{i=2}^{n-1}p_i^2+(x_n+p_n)^2}}\\
&\geq\frac{2x_nM(1-|y_n|)\lt[1-\frac{a_4(1+|y_n|)}{M}\rt]+[(M-a_1)^2-a_1^2]y_1^2}
{\sqrt{l_k^2+(x_n+M)^2}+\sqrt{l_k^2+a_1^2+(x_n+M+a_1)^2}}\\
\end{aligned}
\ee
When $0\leq x_n\leq 10M$ and $M>2a_2$ very large we have
\[\frac{2x_nM(1-|y_n|)\lt[1-\frac{a_4(1+|y_n|)}{M}\rt]+[(M-a_1)^2-a_1^2]y_1^2}
{\sqrt{l_k^2+(x_n+M)^2}+\sqrt{l_k^2+a_1^2+(x_n+M+a_1)^2}}\geq \frac{[(M-a_1)^2-a_1^2]y_1^2}{24M}>a_0y^2_1.\]
When $x_n>10M$ we have
\begin{align*}
&\frac{2x_nM(1-|y_n|)\lt[1-\frac{a_4(1+|y_n|)}{M}\rt]+[(M-a_1)^2-a_1^2]y_1^2}
{\sqrt{l_k^2+(x_n+M)^2}+\sqrt{l_k^2+a_1^2+(x_n+M+a_1)^2}}\\
&\geq \frac{x_nM(1-|y_n|)}{3x_n}=\frac{M}{3}(1-|y_n|)>a_0(1-y_n^2).
\end{align*}
This completes the proof of this Lemma.
\end{proof}

From the above analysis we can see that as $r\goto\infty$
\be\label{sub-asymptotic}
\left\{
\begin{aligned}
&\lu^k_{\bar{\B}_+}(r\theta)-V_{\bar{\B}_+}(r\theta)\goto\sqrt{l_k^2+M^2}-M<l_k,\,\,\text{when {\bf $\theta\notin\B_+$}
is perpendicular to $\p_0 B_1^+,$ }\\
&\lu^k_{\bar{\B}_+}(r\theta)-V_{\bar{\B}_+}(r\theta)\goto q(\theta),\,\,
\text{otherwise.}\\
\end{aligned}
\right.
\ee

\subsection{Construction of supersolution}
\label{constructsup}
Let $\hp(y)=Dq(y)-My,$ where $M>2a_2$ is the same constant as in the construction of the subsolution.
Denote
\begin{align*}
\hz^k_{\bar{\B}_+}(x, y)&=q(y)+MV_{\bar{\B}_+}(y)+\z^k_{\bar{\B}_+}(x+\hp(y))\\
&=q(y)+MV_{\bar{\B}_+}(y)+\sqrt{f_k^2(x_1+\hp_1)+\sum\limits_{i=2}^n(x_i+\hp_i)^2}.\\
\end{align*}
\begin{lemm}
\label{addlem4.2}
 If as $|x|\goto\infty$, $V_{\bar{\B}_+}(x)\goto\infty,$ then we have
$\inf\limits_{y\in\dS^{n-1}}\tz^k_{\bar{\B}_+}(x, y)-V_{\bar{\B}_+}(x)\goto q\lt(\frac{x}{|x|}\rt).$
\end{lemm}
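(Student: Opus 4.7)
The statement is almost certainly intended to concern $\hz^k_{\bar\B_+}$ rather than $\tz^k_{\bar\B_+}$: the displayed $\tz^k$ appears to be a typographical carry-over from the preceding subsection, since we are now constructing the supersolution, the object $\hz^k_{\bar\B_+}$ has just been defined, and the dual statement for $\tz^k_{\bar\B_+}$ with $\sup$ has already been established as Lemma \ref{addlem4.1}. Under this reading the plan is to execute the infimum analog of the proof of Lemma \ref{addlem4.1}, with $\hp(y)=Dq(y)-My$ and the additive $MV_{\bar\B_+}(y)$ replacing $p(y)=Dq(y)+My$ and the constant $-M$.

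The hypothesis $V_{\bar\B_+}(x)\to\infty$ forces the limit direction $\theta:=x/|x|$ to satisfy $V_{\bar\B_+}(\theta)>0$, i.e.\ $\theta\in\bar\B_+$, and I would split along the same two geometric cases as in Lemma \ref{addlem4.1}. In the interior case $\theta\in\B_+^\circ$, Lemma \ref{cblem1}(1) gives the asymptotic $\z^k_{\bar\B_+}(x+\hp(y))=V_{\bar\B_+}(x)+\theta\cdot\hp(y)+o(1)$ as $r=|x|\to\infty$ (for $\hp(y)$ bounded uniformly in $y$), so the expression under the infimum converges pointwise to
\[
\hat G(y)\;=\;q(y)+M\,V_{\bar\B_+}(y)+\theta\cdot\hp(y).
\]
Evaluating at $y=\theta$ gives $\hat G(\theta)=q(\theta)+M+Dq(\theta)\cdot\theta-M=q(\theta)$, using $Dq(\theta)\cdot\theta=0$ and $V_{\bar\B_+}(\theta)=1$.

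The crux is then to verify $\hat G(y)\geq q(\theta)$ for every $y\in\dS^{n-1}$, which is exactly the inequality dual to the one proved in Lemma \ref{addlem4.1}. Writing
\[
\hat G(y)-q(\theta)\;=\;\bigl[q(y)-q(\theta)+Dq(y)\cdot(\theta-y)\bigr]+M\bigl(V_{\bar\B_+}(y)-\theta\cdot y\bigr),
\]
the first bracket is bounded below by $-2a_2(1-\theta\cdot y)$ via the two-sided Taylor inequality \eqref{q.3}. For $y\in\bar\B_+$, $V_{\bar\B_+}(y)=1$, so the second term equals $M(1-\theta\cdot y)$ and the choice $M>2a_2$ closes the inequality. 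For $y\in\dS^{n-1}\setminus\bar\B_+$, hypothesis \eqref{q.2} forces $q(y)=0$ and $Dq(y)=0$, while $V_{\bar\B_+}(y)=\sqrt{1-y_1^2}$; a direct minimization over $y_1\in[-1,0]$ shows $M\bigl(\sqrt{1-y_1^2}-\theta\cdot y\bigr)\geq M$ uniformly in this region, which majorizes $\|q\|_{L^\infty}$ once $M$ is taken large enough.

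The boundary case $\theta\in\partial\bar\B_+$ runs in parallel with Case 2 of Lemma \ref{addlem4.1}: one replaces $\theta$ by a representative limiting direction in $\partial\bar\B_+$ in the asymptotic expansion, and the same minimization in $y$ closes. The principal obstacle, analogous to Lemma \ref{sub-tricky direction lem}, is the uniform control of $\hat G(y)$ for $y$ close to $\partial\bar\B_+$ where $q$ and $Dq$ are small but nonzero; the refined estimates \eqref{std1} and the bound $|q_n|\leq a_4 y_1^2$ derived there are precisely what make the quantitative minorization work. Combining the pointwise limits with the uniformity of these estimates then delivers $\inf_{y\in\dS^{n-1}}\hz^k_{\bar\B_+}(x,y)-V_{\bar\B_+}(x)\to q(x/|x|)$, as claimed.
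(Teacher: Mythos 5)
You are right that the statement is a typo for $\hz^k_{\bar\B_+}$, and your overall strategy — pass to the pointwise limit $\hat G(y)=q(y)+MV_{\bar\B_+}(y)+\theta\cdot\hp(y)$ and show it is minimized at $y=\theta$ — is the paper's strategy. But there are two genuine gaps.

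First, the claim that $V_{\bar\B_+}(x)\to\infty$ forces $\theta=x/|x|\in\bar\B_+$ is false. For $\theta\in\dS^{n-1}$ one has $V_{\bar\B_+}(\theta)=1$ if $\theta_1\geq0$ but $V_{\bar\B_+}(\theta)=|\bar\theta|$ if $\theta_1<0$, so $V_{\bar\B_+}(\theta)>0$ for every $\theta\neq -e_1$. The hypothesis therefore allows $\theta\notin\bar\B_+$ (any $\theta$ with $\theta_1<0$, $\bar\theta\neq0$), and the paper treats this as a separate Case 3 in which the target value is $q(\theta)=0$ and the infimum must be shown to tend to $0$; it also remarks that directions approaching $-e_1$ with $V_{\bar\B_+}(x)\to\infty$ fall under the same discussion. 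Your proof omits this entire regime.

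Second, in the interior case your quantitative lower bound for $y\notin\bar\B_+$, namely $M\bigl(\sqrt{1-y_1^2}-\theta\cdot y\bigr)\geq M$, is incorrect. Since $\theta\cdot y\leq \theta_1 y_1+\sqrt{1-\theta_1^2}\sqrt{1-y_1^2}$ and $\theta_1 y_1\leq 0$, the best one gets is $\sqrt{1-y_1^2}-\theta\cdot y\geq \sqrt{1-y_1^2}\bigl(1-\sqrt{1-\theta_1^2}\bigr)+\theta_1|y_1|$, which tends to $0$ when $\theta$ approaches $\partial\B_+$ and $y\to\theta$. So the bound cannot dominate a fixed $\|q\|_{L^\infty}$ for $M$ large; the proof must instead exploit that $q(\theta)$ is itself small of order $\theta_1^2$ near $\partial\B_+$. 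This is exactly what the paper does: it bounds $|q(\theta)|\leq a_0\cos^2\beta=a_0(1-\sin\beta)(1+\sin\beta)$, compares this with $M(1-\sin\beta)\sqrt{1-y_1^2}+M\cos\beta|y_1|\geq M(1-\sin\beta)$, and closes the inequality only because both sides carry the common factor $(1-\sin\beta)$. Your estimate as stated fails precisely in the regime where this balancing is needed.
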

\begin{proof} We will divide this proof into 3 cases.

\textbf{Case 1.} When $\frac{x}{|x|}\in\B_+$ we assume
\[x=r\cos\beta e_1+r\sin\beta e_n,\,\,\cos\beta>0.\]
We also assume $\sin\beta\geq 0.$ Then as $r\goto\infty,$
when $y\in\bar{\B}_+,$ following the discussion in Subsection \ref{constructionsub} we get,
$\hz^k_{\bar{\B}_+}(x, y)-r\geq q\lt(\frac{x}{|x|}\rt)$
and the equality is achieved when $y=\frac{x}{|x|}.$
When $y\notin\bar{\B}_+$ we get
\begin{align*}
\hz^k_{\bar{\B}_+}(x, y)-r&\goto MV_{\bar{\B}_+}(y)+\sqrt{1+(r\cos\beta-My_1)^2+(r\sin\beta-My_n)^2+M^2(1-y_1^2-y_n^2)}-r\\
&\goto MV_{\bar{\B}_+}(y)-M\cos\beta y_1-M\sin\beta y_n.
\end{align*}
Recall that $\lt|q\lt(\frac{x}{|x|}\rt)\rt|\leq a_0\cos^2\beta,$ as $r\goto \infty$
we can see that
\begin{align*}
&\hz^k_{\bar{\B}_+}(x, y)-r-q\lt(\frac{x}{|x|}\rt)\\
&\geq M(1-\sin\beta)\sqrt{1-y_1^2}+M\cos\beta|y_1|-a_0(1-\sin^2\beta)\geq 0.
\end{align*}
Therefore, when $\frac{x}{|x|}\in\B_+$ and $|x|\goto\infty$ we have
\[\inf\limits_{y\in\dS^{n-1}}\hz^k_{\bar{\B}_+}-V_{\bar{\B}_+}(x)\goto q\lt(\frac{x}{|x|}\rt).\]

\textbf{Case 2.} When $\frac{x}{|x|}\in\p\B_+,$ without loss of generality, we assume
$x=(0, \cdots, 0, r)=re_n.$ As $r\goto 0,$ when $y\in\bar{\B}_+,$ it's easy to see that
\[\hz^k_{\bar{\B}_+}(x, y)-r\geq 0.\]
When $y\notin \bar{\B}_+$ we have
\begin{align*}
\hz^k_{\bar{\B}_+}(x, y)-r&\goto MV_{\bar{\B}_+}(y)+\sqrt{f_k^2(-My_1)+(r-My_n)^2+M^2(1-y_1^2-y_n^2)}-r\\
&\goto MV_{\bar{\B}_+}(y)-My_n\geq 0.
\end{align*}
Here the equality is achieved when $y=(y_1, 0, \cdots, 0, y_n)$ with $y_n\geq 0.$

\textbf{Case 3.} When $\frac{x}{|x|}\notin\bar{\B}_+$ and $V_{\bar{\B}_+}\lt(\frac{x}{|x|}\rt)\neq 0.$
Without loss of generality we assume $x=(r\cos\beta, 0, \cdots, 0, r\sin\beta),$ where
$\cos\beta<0$ and $\sin\beta>0.$ Then as $r\goto\infty$ we have when $y\in\bar{\B}_+$
\begin{align*}
&\hz^k_{\bar{\B}_+}(x, y)-r\sin\beta\\
&\goto q(y)+M+\sqrt{l_k^2+(r\sin\beta+\hat{p}_n(y))^2+\sum\limits_{i=2}^{n-1}\hat{p}_i^2}-r\sin\beta\\
&=q(y)+M+\lt<e_n, \hat{p}(y)\rt>\geq 0,
\end{align*}
where $0$ is achieved when $y=e_n.$
When $y\notin\bar{\B}_+$
\begin{align*}
&\hz^k_{\bar{\B}_+}(x, y)-r\sin\beta\\
&\goto MV_{\bar{\B}_+}(y)-My_n\geq 0,
\end{align*}
and $0$ is achieved when $y=(y_1, 0, \cdots, 0, y_n)$ with $y_n\geq 0.$

Note that when $V_{\bar{\B}_+}\lt(\frac{x}{|x|}\rt)\goto0$ and $V_{\bar{\B}_+}(x)\goto\infty$ as $|x|\goto\infty$ follow the same discussion as in case 3, we can show
that $\inf\limits_{y\in\dS^{n-1}}\tz^k_{\bar{\B}_+}(x, y)-V_{\bar{\B}_+}(x)\goto 0.$ This completes the proof of Lemma \ref{addlem4.2}.
\end{proof}

Now consider the case when $V_{\bar{\B}_+}\lt(\frac{x}{|x|}\rt)\goto0$ and $V_{\bar{\B}_+}(x)<\infty$ as $|x|\goto\infty.$ Without loss of generality we assume
$x=(x_1, 0, \cdots, 0, x_n)$ where $x_1\goto-\infty$ and $x_n\geq 0$ is bounded.
When $y\in\B_{\delta}\lt(\frac{x}{|x|}\rt),$ $0<\delta<\frac{\pi}{2}$ we have
\[\hz^k_{\bar{B}_+}(x, y)\goto MV_{\bar{\B}_+}(y)+\sqrt{l_k^2+(x_n-My_n)^2+M^2(1-y_1^2-y_n^2)}\]
Our supersolution is defined by
\[ \uu^k_{\bar{\B}_+}(x)=\inf\limits_{y\in\B_{\delta}\lt(\frac{x}{|x|}\rt)}\hz^k_{\bar{\B}_+}(x, y).\]
From above discussions it's clear that
when $V_{\bar{\B}_+}(x)\goto\infty$ as $|x|\goto\infty,$ we have $\uu^k_{\bar{\B}_+}(x)-V_{\bar{\B}_+}(x)\goto q\lt(\frac{x}{|x|}\rt).$  We want to show
$\uu^k_{\bar{\B}_+}(x)\geq \lu^k_{\bar{\B}_+}(x)$ as $|x|\goto\infty$ in all directions. Thus we need to prove the following Lemma.

\begin{lemm}
\label{subsuper-comparison lem}For any $\delta\in (0, \frac{\pi}{2}),$
when $y\in\B_{\delta}(-e_1)$ and $x_n\geq 0,$ we have
$$MV_{\bar{\B}_+}(y)+\sqrt{l_k^2+(x_n-My_n)^2+M^2(1-y_1^2-y_n^2)}\geq -M+\sqrt{l_k^2+(x_n+M)^2}.$$
\end{lemm}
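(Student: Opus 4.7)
The first step is to compute $V_{\bar{\B}_+}(y)$ explicitly on $\B_\delta(-e_1)$. Because $y\in\B_\delta(-e_1)$ with $\delta<\pi/2$ forces $y_1<-\cos\delta<0$, the supremum defining $V_{\bar{\B}_+}(y)=\sup\{\xi_1 y_1+\bar\xi\cdot\bar y:|\xi|=1,\xi_1\ge0\}$ is attained by $\xi_1=0$ with $\bar\xi$ parallel to $\bar y$, so
\[
V_{\bar{\B}_+}(y)=\sqrt{1-y_1^2}.
\]
Next, I would expand the radical by writing
\[
(x_n-My_n)^2+M^2(1-y_1^2-y_n^2)=x_n^2-2Mx_n y_n+M^2(1-y_1^2),
\]
so that the left-hand side of the claimed inequality becomes
\[
L(y)=M\sqrt{1-y_1^2}+\sqrt{l_k^2+x_n^2-2Mx_n y_n+M^2(1-y_1^2)}.
\]

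The second step is to minimize $L(y)$ over the admissible $y$. Since $x_n\ge0$ and $y_n^2\le1-y_1^2$, the inner radical is a decreasing function of $y_n$, so its minimum over $y$ with fixed $y_1$ is achieved at $y_n=\sqrt{1-y_1^2}$. Setting $s:=\sqrt{1-y_1^2}\in[0,\sin\delta]$, this yields
\[
L(y)\ge Ms+\sqrt{l_k^2+(x_n-Ms)^2}=:h(s).
\]
A direct differentiation gives
\[
h'(s)=M\cdot\frac{\sqrt{l_k^2+(x_n-Ms)^2}-(x_n-Ms)}{\sqrt{l_k^2+(x_n-Ms)^2}}>0,
\]
so $h$ is strictly increasing and its minimum on $[0,\sin\delta]$ is $h(0)=\sqrt{l_k^2+x_n^2}$. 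Hence
\[
L(y)\ge\sqrt{l_k^2+x_n^2}.
\]

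The final step is to verify $M+\sqrt{l_k^2+x_n^2}\ge\sqrt{l_k^2+(x_n+M)^2}$. Both sides are positive, so squaring reduces this to $2M\sqrt{l_k^2+x_n^2}\ge 2Mx_n$, i.e. $\sqrt{l_k^2+x_n^2}\ge x_n$, which is trivial. Combining the three steps proves the lemma. I do not expect a serious obstacle here: the only subtlety is recognizing that the condition $y\in\B_\delta(-e_1)$ with $\delta<\pi/2$ is exactly what forces $y_1<0$, allowing the clean formula $V_{\bar{\B}_+}(y)=\sqrt{1-y_1^2}$ and triggering the two successive monotonicity reductions.
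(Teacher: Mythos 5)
Your proof is correct, but it takes a different route from the paper's. The paper simply drops the nonnegative term $M^2(1-y_1^2-y_n^2)$ and bounds $MV_{\bar{\B}_+}(y)\geq M|y_n|$, reducing the statement to the one-variable inequality $M|y_n|+\sqrt{l_k^2+(x_n-My_n)^2}\geq -M+\sqrt{l_k^2+(x_n+M)^2}$; it then proves this by rationalizing the difference of radicals, factoring the numerator, and estimating the resulting quotient by $M(1+y_n)\leq M(1+|y_n|)$. You instead keep all terms, compute $V_{\bar{\B}_+}(y)=\sqrt{1-y_1^2}$ exactly, reduce by a double minimization (first over $y_n$ at fixed $y_1$, then over $s=\sqrt{1-y_1^2}$ via the monotonicity of $h$), and land on the sharper bound $L(y)\geq \sqrt{l_k^2+x_n^2}$ before closing with the elementary comparison $M+\sqrt{l_k^2+x_n^2}\geq\sqrt{l_k^2+(x_n+M)^2}$. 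Your argument is a bit longer but more structured and yields the strictly stronger intermediate estimate; the paper's is shorter and more ad hoc. Both are valid; note that neither proof actually uses the restriction $\delta<\pi/2$ beyond ensuring $y_1<0$ so that $V_{\bar{\B}_+}(y)=\sqrt{1-y_1^2}$ (or $\geq|y_n|$), which you correctly flag as the only point where that hypothesis enters.
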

\begin{proof}
It's easy to see that
\[L.H.S.\geq M|y_n|+\sqrt{l_k^2+(x_n-My_n)^2}.\]
If we can show $ M|y_n|+\sqrt{l_k^2+(x_n-My_n)^2}\geq  -M+\sqrt{l_k^2+(x_n+M)^2},$ then we would be done.
Notice that
\begin{align*}
&\sqrt{l_k^2+(x_n+M)^2}-\sqrt{l_k^2+(x_n-My_n)^2}\\
&=\frac{2x_nM(1+y_n)+M^2(1-y_n^2)}{\sqrt{l_k^2+(x_n+M)^2}+\sqrt{l_k^2+(x_n-My_n)^2}}\\
&\leq M(1+y_n)\leq M(1+|y_n|).
\end{align*}
Therefore the Lemma is proved.
\end{proof}

Since when $\theta\notin\B_+$ is perpendicular to $\p_0 B_1^+,$ by the above discussion we have, as $r\goto\infty$
\begin{align*}
\hz^k_{\bar{B}_+}(x, y)&\goto MV_{\bar{\B}_+}(y)+\sqrt{l_k^2+(My_n)^2+M^2(1-y_1^2-y_n^2)}\\
&=M\sqrt{1-y_1^2}+\sqrt{l_k^2+M^2(1-y_1^2)}\geq l_k.
\end{align*}
We conclude that as $|r|\goto\infty$
\be\label{super-asymptotic}
\left\{
\begin{aligned}
&\uu^k_{\bar{\B}_+}(r\theta)-V_{\bar{\B}_+}(r\theta)\goto l_k,\,\,\text{when {\bf $\theta\notin\B_+$}
is perpendicular to $\p_0 B_1^+$ }\\
&\uu^k_{\bar{\B}_+}(r\theta)-V_{\bar{\B}_+}(r\theta)\goto q(\theta),\,\,
\text{otherwise.}\\
\end{aligned}
\right.
\ee

\subsection{Construction of spacelike cutoff function}
\label{constructcut}
Recall in \cite{WX} we proved that the following function is spacelike.
\begin{lemm}
\label{lem23}(See Lemma 23 in \cite{WX} )
Let $A_0=A_0(\lambda),$ $B_0=B_0(\lambda)$ be large numbers depending on $\lambda\in(0, 1].$ Then when $R_0>A_0,$ $R_1>B_0R_0,$
\be\label{cutoff-function}
\psi_{\bar{\B}_+}=\left\{
\begin{aligned}
&\sqrt{\lambda^2+V^2_{\bar{\B}_+}(x)}+\frac{1}{\sqrt{1+|\bar{x}|^2}}\lt(1-V_{\bar{\B}_+}\lt(\frac{x}{|x|}\rt)\rt),\,\, |x|\geq R_1\\
&\sqrt{\lambda^2+V^2_{\bar{\B}_+}(x)}+\frac{1}{\sqrt{1+|\bar{x}|^2}}\lt(1-V_{\bar{\B}_+}\lt(\frac{x}{|x|}\rt)\rt)\eta(x),\,\, R_0<|x|<R_1\\
&\sqrt{\lambda^2+V^2_{\bar{\B}_+}(x)},\,\,|x|\leq R_0
\end{aligned}
\right.
\ee
is spacelike on $\R^n,$ where $\eta(x)=\frac{|x|-R_0}{R_1-R_0}.$
\end{lemm}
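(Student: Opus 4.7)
Proof proposal.

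The plan is to decompose $\psi_{\bar{\B}_+}$ as $u_1 + h$, where $u_1(x) := \sqrt{\lambda^2 + V_{\bar{\B}_+}^2(x)}$ is the base term and $h$ is the correction (equal to $h_0(x) := (1-V_{\bar{\B}_+}(x/|x|))/\sqrt{1+|\bar{x}|^2}$ on $|x|\geq R_1$, to $h_0\eta$ on the transition region, and to $0$ on $|x|\leq R_0$). First I would verify that $u_1$ is uniformly spacelike with a quantitative pointwise gap, then estimate $Dh$ and $Du_1\cdot Dh$, and finally choose $R_0$ and $R_1$ large enough (in terms of $\lambda$) to absorb the latter into the former so that $|D\psi_{\bar{\B}_+}|<1$ a.e.

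For the base term, one has $V_{\bar{\B}_+}(x)=|x|$ on $\{x_1\geq 0\}$ and $V_{\bar{\B}_+}(x)=|\bar{x}|$ on $\{x_1<0\}$; in both subregions $V_{\bar{\B}_+}$ is convex and $1$-Lipschitz with $|DV_{\bar{\B}_+}|=1$ a.e., so
\[
|Du_1|^2 = \frac{V_{\bar{\B}_+}^2(x)}{\lambda^2+V_{\bar{\B}_+}^2(x)},\qquad 1-|Du_1|^2 = \frac{\lambda^2}{\lambda^2+V_{\bar{\B}_+}^2(x)}>0,
\]
which identifies the pointwise spacelike gap. Moreover, on $\{x_1\geq 0\}$ one has $V_{\bar{\B}_+}(x/|x|)\equiv 1$, hence $h\equiv 0$ and $\psi_{\bar{\B}_+}=u_1$ there; so this half-space is disposed of trivially.

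The essential work is on $\{x_1<0\}$. Setting $s:=|x|$ and $t:=|\bar{x}|$ and writing $h_0=(s-t)/(s\sqrt{1+t^2})$, I would compute
\[
\partial_s h_0 = \frac{t}{s^2\sqrt{1+t^2}}\geq 0,\qquad \partial_t h_0 = -\frac{1+st}{s(1+t^2)^{3/2}}\leq 0,
\]
convert these to Cartesian derivatives via the chain rule, and derive explicit expressions for $Dh_0$, $|Dh_0|^2$, and the cross product $Du_1\cdot Dh_0$ (the latter turns out to be $\leq 0$ in this half-space, since the sign of $\partial_t h_0$ dominates the mixed cross term). Expanding $|D\psi|^2 = |Du_1|^2 + 2\,Du_1\cdot Dh + |Dh|^2$ and using the gap above, the spacelike condition on the outer region $|x|\geq R_1$ reduces to a pointwise scalar inequality in $(s,t)$ which is satisfied for $R_0>A_0(\lambda)$ with a suitable threshold $A_0(\lambda)$.

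In the transition region $R_0<|x|<R_1$ the only new ingredient is the cutoff: writing $Dh=h_0\,D\eta+\eta\,Dh_0$ and noting $|D\eta|=1/(R_1-R_0)$, $|h_0|\leq 1/\sqrt{1+t^2}$, and $Du_1\cdot D\eta = t^2/(s\sqrt{\lambda^2+t^2}(R_1-R_0))$, one checks that each extra contribution is of order $1/(R_1-R_0)$. Taking $R_1>B_0 R_0$ for a suitable $B_0(\lambda)$ makes these smaller than the gap $\lambda^2/(\lambda^2+t^2)$ uniformly in the region, which closes the argument. The main obstacle throughout is the quantitative book-keeping of the $\lambda$-dependence: the gap $\lambda^2/(\lambda^2+V_{\bar{\B}_+}^2)$ degenerates as $V_{\bar{\B}_+}\to\infty$, so the thresholds $A_0$ and $B_0$ must be tuned as explicit increasing functions of $1/\lambda$ to ensure that every gradient contribution (from $h_0$, from $\eta$, and from the various cross terms) stays strictly below this vanishing gap on the entire unbounded region $|x|>R_0$.
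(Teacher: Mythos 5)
The paper does not prove Lemma \ref{lem23}; the statement is quoted with a citation to Lemma 23 of \cite{WX}, so there is no in-paper argument against which to compare. Your direct computational sketch is the natural route, and the formulas you record are correct: the gap $1-|Du_1|^2 = \lambda^2/(\lambda^2+V_{\bar{\B}_+}^2)$, the signs $\partial_s h_0 = t/(s^2\sqrt{1+t^2})\geq 0$ and $\partial_t h_0 = -(1+st)/(s(1+t^2)^{3/2})\leq 0$, the sign $Du_1\cdot Dh_0\leq 0$, the bound $|h_0|\leq 1/\sqrt{1+t^2}$, and the cross term $Du_1\cdot D\eta = t^2/(s\sqrt{\lambda^2+t^2}(R_1-R_0))$.

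The genuine gap lies in the step you compress to ``reduces to a pointwise scalar inequality in $(s,t)$ which is satisfied for $R_0>A_0(\lambda)$.'' Writing $Dh_0=\sigma_1\,Ds+\sigma_2\,Dt$ with $\sigma_1=\partial_s h_0$, $\sigma_2=\partial_t h_0$, $Ds\cdot Dt=t/s$, and $a:=t/\sqrt{\lambda^2+t^2}+\sigma_2$, one finds on the outer region
\[
1-|D\psi|^2 \;=\; (1-a^2)\;-\;\sigma_1\bigl(\sigma_1+2a\,t/s\bigr).
\]
In the regime $t\sim c\,s$ with $0<c<1$ and $s$ large, the positive error $2\sigma_1 a\,t/s \approx 2c/s^{2}$ is of the same order as, and for small $\lambda$ strictly dominates, the base gap $\lambda^2/(\lambda^2+t^2)\approx \lambda^2/(c^{2}s^{2})$. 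So the plan of ``using the gap'' together with the crude facts $Du_1\cdot Dh_0\le0$ and $|Dh_0|^2$ small does not close: one cannot simply discard the negative cross term and absorb $|Dh_0|^2$ into $\lambda^2/(\lambda^2+t^2)$. What saves the argument is the $\lambda$-\emph{independent} contribution of $|\sigma_2|\approx 1/(c^{2}s^{2})$ to $1-a$, giving $(1-a)(1+a)\approx(\lambda^{2}+2)/(c^{2}s^{2})$, which beats $2c/s^{2}$ precisely because $c<1$. Thus $\partial_t h_0\leq 0$ must be carried through the exact expansion of $1-a^2$ rather than used only as a sign remark on $Du_1\cdot Dh_0$; once this accounting (and its analogue with the extra $h_0 D\eta$ piece in the transition annulus) is done, the argument closes and the thresholds $A_0,B_0$ can be taken as explicit increasing functions of $1/\lambda$.
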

Now let $\pM_{\bar{\B}_+}(x)=M_1\psi_{\bar{\B}_+}\lt(\frac{x}{M_1}\rt),$ where $M_1>0$ is a large constant to be determined. Then $\pM_{\bar{\B}_+}(x)$ is sapcelike on $\R^n$ as well. Moreover, similar to the behavior of $\z^k_{\bar{\B}_+},$ at infinity,
for $\theta=(\theta_1,\theta_2,\cdots, \theta_n)\in\mathbb{S}^{n-1}$,
we have
\[\lim\limits_{r\goto\infty}{\left(\pM_{\bar{\B}_+}(r\theta)-V_{\bar{\B}_+}( r\theta)\right)}=
\left\{
\begin{aligned}
&M_1(\lambda+1),\,\,\theta\bot\T_0\bar{B}_1^+:=\{\xi\in\bar{B}_1, \xi_1=0\}\text{ and } \theta_1=-1\\
&0,\,\,\text{elsewhere.}
\end{aligned}
\right.
\]
We will consider
\[\pM_{\bar{\B}_+}(x, y)=q(y)-M+\pM_{\bar{\B}_+}(x+p(y)),\]
where $p(y)=Dq(y)+My.$ Note that when $|x|\geq M_1R_1$
\[\pM_{\bar{\B}_+}(x)=\sqrt{M_1^2\lambda^2+V^2_{\bar{\B}_+}(x)}+\frac{M_1^2}{\sqrt{M_1^2+|\bar{x}|^2}}\lt(1-V_{\bar{\B}_+}\lt(\frac{x}{|x|}\rt)\rt).\]
When $V_{\bar{\B}_+}(x)\goto\infty$ as $|x|\goto\infty,$ we can discuss like in the Subsection \ref{constructionsub} and obtain
\[\sup\limits_{y\in\dS^{n-1}}\pM_{\bar{\B}_+}(x, y)-V_{\bar{\B}_+}(x)\goto q\lt(\frac{x}{|x|}\rt).\]
We only need to look at the case when $V_{\bar{\B}_+}\lt(\frac{x}{|x|}\rt)\goto 0$ and $V_{\bar{\B}_+}(x)$ is bounded.
Without loss of generality, let's assume $x=(x_1, 0, \cdots, 0, x_n)$ where $x_1\goto-\infty$ and $x_n\geq 0$
is bounded. It is easy to see that when $r\goto\infty$
\begin{align*}
&\pM_{\bar{\B}_+}(x, y)-V_{\bar{\B}_+}(x)\\
&\geq -M+V_{\bar{\B}_+}(x+p(y))+\frac{M_1^2}{\sqrt{M_1^2+|\bx+\bar{p}|^2}}-x_n.
\end{align*}
Therefore, let $y=(0, \cdots, 0, 1)$ we get
\be\label{cut1.1}
\begin{aligned}
&\sup\limits_{y\in\dS^{n-1}}\pM_{\bar{\B}_+}(x, y)-V_{\bar{\B}_+}(x)\\
&\geq\frac{M_1^2}{\sqrt{M_1^2+(x_n+M)^2}}.
\end{aligned}
\ee
In view of the definition of $\uu^k_{\bar{\B}_+}$ we know, when $x=(x_1, 0, \cdots, 0, x_n)$ where $x_1\goto-\infty$ and $x_n\geq 0$
is bounded, $\uu^k_{\bar{\B}_+}(x)-V_{\bar{\B}_+}(x)\leq \frac{l_k}{\sqrt{l_k^2+x_n^2}+x_n}.$

Combining the analysis above, it's straightforward to see that when $M_1=M_1(M, l_k)>0$ large, we have as $|x|\goto\infty,$
\[\td{\Psi}_{\bar{\B}_+}(x)-\uu^k_{\bar{\B}_+}(x)\geq 0,\]
here $\td{\Psi}_{\bar{\B}_+}(x):=\sup\limits_{y\in\dS^{n-1}}\pM_{\bar{\B}_+}(x, y);$ while in a compact set
$K\subset B_{R_0}(x),$ we can choose $\lambda>0$ small such that
$\lu^k_{\bar{\B}_+}(x)-\td{\Psi}_{\bar{\B}_+}(x)>0.$

\subsection{Construction for general $\F$}
\label{constructF}
In this subsection, we will construct the subsolution, supersolution, and the spacelike cutoff function for general prescribed
lightlike directions $\F\in\dS^{n-1}.$
Let's recall the following Lemma from \cite{BS}:
\begin{lemm}
\label{cblem2}(Lemma 4.5 in \cite{BS})
Let $\F$ be the closure of some open nonempty subset of the ideal boundary $\dS^{n-1}$ with $\T \F\in C^{1, 1}.$
There exists $\delta_0>0$ such that the following holds:

(1) $\F$ and $\bar{\F}^c$ are the union of closed balls of $\dS^{n-1}$ of radius $\delta_0.$

(2) For every $x\in\bar{\F}^c,$ there exists a closed ball $\B$ with radius bounded below by $\delta_0$ which contains $x$
and is contained in $\bar{\F}^c$ such that $d_S(x, \B^c)=d_S(x, \F).$
\end{lemm}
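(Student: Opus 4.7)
The plan is to establish a uniform two-sided rolling ball condition along $\p\F$ and then deduce both (1) and (2) as direct consequences.

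First, I would exploit the assumption $\p\F\in C^{1,1}$ together with compactness of $\p\F$ in $\dS^{n-1}$ to obtain a uniform constant $K$ that bounds, almost everywhere, the second fundamental form of $\p\F$: cover $\p\F$ by finitely many charts in which it is the graph of a $C^{1,1}$ function and take the maximum of the essential $C^{1,1}$ seminorms. Choosing $\delta_0>0$ smaller than both $1/K$ and the injectivity radius of $\dS^{n-1}$, a tubular neighborhood argument then yields the following: for every $p\in\p\F$ and each of the two unit normals to $\p\F$ at $p$, there is a closed geodesic ball of radius $\delta_0$ tangent to $\p\F$ at $p$ and lying entirely on the corresponding side of $\p\F$. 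The reason is that along a geodesic normal to $\p\F$, the signed distance to $\p\F$ remains smooth with derivative $\pm1$ on a spherical neighborhood whose width is bounded below explicitly in terms of $K$ and the ambient curvature of $\dS^{n-1}$.

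Granting this rolling ball, claim (1) follows by dichotomy. If $x\in\F$ with $d_S(x,\p\F)\geq\delta_0$ then $\bar\B_{d_S(x,\p\F)}(x)\subset\F$ is a closed ball through $x$ of radius at least $\delta_0$; otherwise, letting $p\in\p\F$ realize $d_S(x,\p\F)$, the interior rolling ball at $p$ contains $x$ and is contained in $\F$. The same argument applied to $\overline{\F^c}$ gives its half of (1). For (2), let $\rho:=d_S(x,\F)$ and let $p\in\p\F$ realize this distance. If $\rho\leq\delta_0$, take $\B$ to be the exterior rolling ball of radius $\delta_0$ tangent to $\p\F$ at $p$; then $x$ lies on the radius joining the center of $\B$ to $p\in\p\B$, so $d_S(x,\B^c)=\rho=d_S(x,\F)$. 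If $\rho>\delta_0$, take instead $\B=\bar\B_\rho(x)$, which is contained in $\overline{\F^c}$ by the very definition of $\rho$ and trivially satisfies $d_S(x,\B^c)=\rho$.

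The main obstacle is the rigorous passage from $C^{1,1}$ regularity to the uniform rolling ball: the second fundamental form is only defined almost everywhere, so one cannot directly invoke the classical smooth tubular neighborhood theorem. I would handle this either by mollifying the local defining functions to obtain smooth approximants with uniformly bounded Hessians, proving the rolling ball uniformly for them, and passing to the limit; or by arguing directly through the Lipschitz Gauss map (the $C^{1,1}$ assumption makes the unit normal to $\p\F$ Lipschitz, so the normal exponential map is bi-Lipschitz on a uniform tubular neighborhood by the Lipschitz inverse function theorem). Once uniformity is secured, the spherical ambient geometry poses no new difficulty thanks to the uniform positive injectivity radius of $\dS^{n-1}$.
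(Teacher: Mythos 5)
The paper does not actually prove this lemma; it is quoted verbatim from Bayard--Schn\"urer \cite{BS} (their Lemma~4.5) and used as a black box. Your reconstruction is correct and follows what is essentially the only natural route: a compactness argument gives a uniform essential bound on the second fundamental form of the $C^{1,1}$ boundary, which yields a uniform two-sided geodesic rolling-ball condition on $\dS^{n-1}$; items (1) and (2) are then read off by the dichotomy $d_S(x,\p\F)\geq\delta_0$ versus $d_S(x,\p\F)<\delta_0$ (resp.\ $\rho>\delta_0$ versus $\rho\leq\delta_0$), and the distance identity $d_S(x,\B^c)=\rho$ follows because $x$, the contact point $p$, and the center of the rolling ball are collinear along the minimizing geodesic. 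You also correctly identify the one genuinely delicate point, namely upgrading the a.e.\ Hessian bound to a uniform tubular neighborhood, and both remedies you propose (mollification of local graphs with uniform Hessian control and passage to the limit, or the Lipschitz-inverse-function-theorem applied to the normal exponential map of a Lipschitz Gauss map) are standard and adequate; the only spherical subtlety, that the geodesic ball of radius $\rho$ must have $\rho<\pi$ so that $\bar\B_\rho(x)$ is a proper closed ball, holds automatically because $\F\neq\emptyset$.
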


Now, for a given $\F,$ we fix $\delta_0>0$ as in Lemma \ref{cblem2}.
Let
\be\label{def-lz}
\ubar{\z}^k_\F(x)=\sup\limits_{\bar{\B}\subset \F, \delta(\bar{\B})\geq\delta_0}\z^k_{\bar{\B}}(x),\ee
where $\z^k_{\bar{\B}}(x)$
are semitroughs satisfying $\s_k(\ka[\z_{\bar{\B}}(x)])=\binom{n}{k}.$
Following the proof of Theorem 4.3 in \cite{BS}, we can show
\begin{lemm}
\label{cblem0}
Let $\mathbb{S}^{n-1}$ denote the ideal boundary at infinity of the hyperbolic space and $\F\subset\mathbb{S}^{n-1}$
be a non-empty closed subset. Assume that $\F$ is of the following form: if $n\geq 3,$ $\F$ is the closure of some open subset of
$\mathbb{S}^{n-1}$ with $C^{1, 1}$ boundary; if $n=2,$ $\F$ is a finite union of non-trivial intervals on the unit circle. Define
$V_\F:\R^n\goto\R$ by
\[V_\F(x):=\sup\limits_{\lambda\in\F}x\cdot\lambda.\]
Then $\ubar{\z}^k_\F,$ where $1\leq k\leq n,$ defined in \eqref{def-lz} satisfies the following properties:\\
(1) \be\label{cb0.1}
C>\ubar{\z}^k_\F-V_\F(x)>0 \,\,\mbox{for any $x\in\R^n,$}
\ee
where $C>0$ is a constant depending on $n, k,$ and $\delta_0.$ Moreover, if $\frac{x}{|x|}\in\F,$ then
\be\label{cb0.2}
\ubar{\z}^k_\F(x)-V_\F(x)\goto 0\,\,\mbox{as $|x|\goto\infty.$}
\ee
(2) For every compact subset $K\subset\R^n,$ there exists a constant $\eta>0$ such that for every $x\in K$
\be\label{cb0.3}
\ubar{\z}^k_\F(x)\geq V_\F(x)+\eta.
\ee
\end{lemm}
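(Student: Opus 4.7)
The plan is to imitate the blueprint of Theorem~4.3 in \cite{BS}, coupling the single-ball facts from Lemma~\ref{cblem1} with the structural decomposition of $\F$ supplied by Lemma~\ref{cblem2}. The organizing idea is that for each $x \in \R^n$ one can locate a closed ball $\bar{\B}_{*} \subset \F$ of radius $\geq \delta_0$ whose own support function $V_{\bar{\B}_{*}}$ already coincides with $V_\F$ at that $x$; this reduces the comparison between $\ubar{\z}^k_\F$ and $V_\F$ to a comparison between a single semitrough and its own support function, which is exactly what Lemma~\ref{cblem1} handles.

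For the pointwise lower bound in (1) and for part (2), I would fix $x \in \R^n$ and use compactness of $\F$ to pick $\lambda^* \in \F$ realizing $V_\F(x) = x \cdot \lambda^*$. Lemma~\ref{cblem2}(1) produces $\bar{\B}_{*} \subset \F$ with $\delta(\bar{\B}_{*}) \geq \delta_0$ and $\lambda^* \in \bar{\B}_{*}$; the sandwich $x \cdot \lambda^* \leq V_{\bar{\B}_{*}}(x) \leq V_\F(x) = x \cdot \lambda^*$ forces $V_{\bar{\B}_{*}}(x) = V_\F(x)$. Applying the strict inequality $\z^k_{\bar{\B}_{*}} > V_{\bar{\B}_{*}}$ from Lemma~\ref{cblem1}(1) then gives $\ubar{\z}^k_\F(x) \geq \z^k_{\bar{\B}_{*}}(x) > V_\F(x)$. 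For part (2), I would run the same construction at every $x \in K$ and invoke the quantitative estimate \eqref{cb1.6}; its constant $\delta = \delta(K,\delta_0,k,n)$ is independent of which ball is chosen, so it yields the uniform $\eta$ in \eqref{cb0.3}.

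For the global upper bound in \eqref{cb0.1}, the first observation is that Lemma~\ref{cblem2}(1) applied to $\bar{\F}^c$ forces every admissible $\bar{\B} \subset \F$ to satisfy $\delta_0 \leq \delta(\bar{\B}) \leq \pi - \delta_0$, which is precisely the hypothesis of Lemma~\ref{cblem1}. Writing $\z^k_{\bar{\B}}$ as a Lorentz boost of the model semitrough $\z^k_{\bar{\B}_+}$ confines the boost parameter $\alpha$ to a compact subinterval of $(-1,1)$ determined solely by $\delta_0$. The limiting values $l_k/\sqrt{1-\alpha^2}$ and $0$ from \eqref{cb1.4} are therefore bounded uniformly in $\bar{\B}$, and combining this with continuity of $\z^k_{\bar{\B}} - V_{\bar{\B}}$ on bounded regions and joint continuity in $(\bar{\B},x)$ extracts a single constant $C = C(n,k,\delta_0)$ with $\z^k_{\bar{\B}}(x) - V_{\bar{\B}}(x) \leq C$ for every admissible $\bar{\B}$ and every $x$. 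The monotonicity $V_{\bar{\B}} \leq V_\F$ upgrades this to $\ubar{\z}^k_\F - V_\F \leq C$ after taking the supremum in $\bar{\B}$.

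The decay \eqref{cb0.2} then follows cleanly: since $\theta \in \F$ forces $V_\F(r\theta) = r$, and the universal pointwise bound $\z^k_{\bar{\B}} \leq g = \sqrt{1+|x|^2}$ from Lemma~\ref{cblem1}(1) passes to the supremum to give $\ubar{\z}^k_\F(r\theta) \leq \sqrt{1+r^2}$, the sandwich $0 < \ubar{\z}^k_\F(r\theta) - V_\F(r\theta) \leq \sqrt{1+r^2} - r \goto 0$ finishes the job. I expect the main obstacle to be the uniform extraction of the constant $C$ in the previous paragraph: Lemma~\ref{cblem1} supplies only pointwise-in-$\bar{\B}$ asymptotic information, so compressing this into a single $C$ independent of the ball genuinely requires exploiting the two-sided radius restriction $\delta_0 \leq \delta(\bar{\B}) \leq \pi - \delta_0$ delivered by Lemma~\ref{cblem2}.
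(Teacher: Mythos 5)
Your proposal follows the same route the paper intends (the paper defers the proof to Theorem~4.3 of \cite{BS}, and your ball-decomposition argument mirrors that blueprint). The positivity argument, part~(2), and the decay \eqref{cb0.2} are all correct: picking $\bar{\B}_*\subset\F$ of radius $\geq\delta_0$ through a maximizer $\lambda^*$ of $V_\F(x)$ makes $V_{\bar{\B}_*}(x)=V_\F(x)$, and Lemma~\ref{cblem1}(1)--(2) then does the rest; the sandwich $0<\ubar{\z}^k_\F(r\theta)-r\leq\sqrt{1+r^2}-r$ for $\theta\in\F$ gives \eqref{cb0.2}. You also correctly observe that Lemma~\ref{cblem2}(1) applied to $\bar{\F}^c$ forces $\delta_0\leq\delta(\bar{\B})\leq\pi-\delta_0$ for every admissible ball.

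The one genuine gap is the uniform upper bound. Invoking the asymptotics \eqref{cb1.4} together with ``joint continuity in $(\bar{\B},x)$'' does not by itself produce a constant $C$ valid for all $x\in\R^n$: the limit in \eqref{cb1.4} is taken along rays $r\theta$ with $\theta$ fixed, the limiting profile is discontinuous in $\theta$ (jumping between $l_k/\sqrt{1-\alpha^2}$ and $0$), and continuity of $\z^k_{\bar{\B}}-V_{\bar{\B}}$ on compact sets controls nothing at infinity. To close this you should make the bound quantitative. For the model semitrough one has $\z^k_{\bar{\B}_+}(x)-V_{\bar{\B}_+}(x)<1/l_k$ for all $x$ (from $f_k^2-t^2<1$ and $f_k\geq l_k$). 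A vertical translate of the graph of $V_{\bar{\B}_+}$ by $c=1/l_k$ is carried by the boost \eqref{cb1.2} to a translate of the boosted cone $V_{\bar{\B}}$ by the Minkowski vector $\lt(-\alpha c/\sqrt{1-\alpha^2},\,0,\dots,0,\,c/\sqrt{1-\alpha^2}\rt)$; since $V_{\bar{\B}}$ is $1$-Lipschitz, this yields $\z^k_{\bar{\B}}(x)-V_{\bar{\B}}(x)\leq c(1+|\alpha|)/\sqrt{1-\alpha^2}$ for every $x$. Your observation that $|\alpha|\leq\alpha_0(\delta_0)<1$ then gives the desired $C=C(n,k,\delta_0)$, and combining with $V_{\bar{\B}}\leq V_\F$ finishes \eqref{cb0.1}.
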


In the following, we assume the boundary function $q(y)$ satisfies
\be\label{q.1'}
q\in C^{2, 1}_{\text{loc}}\lt(\R^n\setminus\{0\}\rt)\,\,\mbox{and $q(x)=q\lt(\frac{x}{|x|}\rt)$},
\ee
and
\be\label{q.2'}
q(y)\equiv 0\,\,\mbox{for $y\in\dS^{n-1}\setminus\F.$}
\ee

First, let $\td{\z}^k_{\F}(x, y)=q(y)-M+\ubar{\z}^k_{\F}(x+p(y)),$
where $p(y)=Dq(y)+My.$ Then the subsolution is chosen as
\be\label{lower barrier}
\lu^k_{\F}(x)=\sup\limits_{y\in\dS^{n-1}}\td{\z}^k_{\F}(x, y).
\ee

Next, let $\bar{\z}^k_{\F}(x)=\inf\limits_{\bar{\B}\supset\F, \delta(\bar{\B})\leq\pi-\delta_0}\z^k_{\bar{\B}}(x)$
and $\hz^k_{\F}(x, y)=q(y)+MV_\F(y)+\bar{\z}^k_{\F}(x+\hp(y)),$
where $\hp(y)=Dq(y)-My.$ Then the supersolution is constructed as
\[\uu^k_{\F}(x)=\inf\limits_{y\in\bar{\B}_{\delta}\lt(\frac{x}{|x|}\rt)}\hz^k_{\F}(x, y).\]
We choose $\delta>0$ so small such that for any $\frac{x}{|x|}\perp(\p\td{F}\cap B_1)$ and $\frac{x}{|x|}\notin\F,$
$\bar{\B}_{\delta}\lt(\frac{x}{|x|}\rt)\cap\F=\emptyset.$ Here $\td{F}$ is the convex hull of $\F$ in $B_1.$

Finally, let $\pM_{\F}(x)=\inf\limits_{\bar{\B}\supset\F, \delta(\bar{\B})\leq\pi-\delta_0}\psi^{M_1}_{\bar{\B}}(x),$
where $\pM_{\bar{\B}}(x)$ is the Lorentz transform of $\pM_{\bar{\B}_+}(x).$
Let $\pM_{\F}(x, y)=q(y)-M+\pM_{\F}(x+p(y)),$
where $p(y)=Dq(y)+My.$ Then the spacelike cutoff function is chosen as
\be\label{cutoff-function-F}
\td{\Psi}_{\F}(x)=\sup\limits_{y\in\dS^{n-1}}\pM_{\F}(x, y).
\ee

In the following, we will explain why the subsolution, supersolution, and spacelike cutoff function satisfy the condition:
as $|x|\goto \infty,$ $\td{\Psi}_{\F}(x)\geq \uu^k_\F\geq\lu^k_\F(x).$
In view of \eqref{lower barrier} we have
\be\label{exp.1}
\begin{aligned}
\lu^k_\F(x)&=\sup\limits_{y\in\dS^{n-1}}\left\{ q(y)-M+\ubar{\z}^k_\F(x+p(y))\right\}\\
&=\sup\limits_{y\in\dS^{n-1}}\lt\{q(y)-M+\sup\limits_{\bar{\B}\subset\F, \delta(\bar{\B})\geq\delta_0}\z^k_{\bar{\B}}(x+p(y))\rt\}\\
&=\sup\limits_{y\in\dS^{n-1}}\lt\{q(y)-M+\z^k_{\bar{\B}_{x, y}}(x+p(y))\rt\}\\
&=q(y_0)-M+\z^k_{\bar{\B}_0}(x+p(y_0)).
\end{aligned}
\ee
We want to point out that since the choice of $y_0$ depends on $x,$ the choice of $\bar{\B}_0$ only depends on $x$ as well.
From \eqref{exp.1} we can see that in order to understand the asymptotic behavior of $\lu^k_\F$ as $|x|\goto\infty,$ we only need to
understand the behavior of
\[\z^k_{\bar{\B}_0}(x)-V_{\bar{\B}_0}(x)\,\,\mbox{as $|x|\goto\infty.$}\]
After a rotation, we may assume
\[(-1, 0, \cdots, 0)\perp\p\td{B}_0\cap B_1,\]
where $\td{B}_0$ is the convex hull of $\bar{\B}_0$ in $B_1.$ Then by \eqref{cb1.2} we get
\be\label{exp.2}
\z^k_{\bar{\B}_0}(\td{x}_1', \bar{x})-V_{\bar{\B}_0}(x_1', \bar{x})=
\frac{\z^k_{\bar{\B}_+}(x)-V_{\bar{\B}_+}(x)}{\sqrt{1-\al^2}},
\ee
where $\td{x}_1'=\frac{x_1-\al\z^k_{\bar{\B}_+}(x)}{\sqrt{1-\alpha^2}},$
$x_1'=\frac{x_1-\al V_{\bar{\B}_+}(x)}{\sqrt{1-\al^2}},$ and $\bar{x}=(x_2, \cdots, x_n).$
Therefore
\be\label{exp.3}
\begin{aligned}
&\z^k_{\bar{\B}_0}(\td{x}_1', \bar{x})-V_{\bar{\B}_0}(\td{x}_1', \bar{x})\\
&=\z^k_{\bar{\B}_0}(\td{x}_1', \bar{x})-V_{\bar{\B}_0}(x_1', \bar{x})+V_{\bar{\B}_0}(x_1', \bar{x})-V_{\bar{\B}_0}(\td{x}_1', \bar{x})\\
&=\frac{\z^k_{\bar{\B}_+}(x)-V_{\bar{\B}_+}(x)}{\sqrt{1-\al^2}}+V_{\bar{\B}_0}(x_1', \bar{x})-V_{\bar{\B}_0}(\td{x}_1', \bar{x}).
\end{aligned}
\ee
Notice that as $|x|\goto\infty,$ when $\frac{x}{|x|}\nrightarrow (-1, 0, \cdots, 0)$
we have $\td{x}_1\goto x_1',$ which yields
\[\z^k_{\bar{\B}_0}(\td{x}_1', \bar{x})-V_{\bar{\B}_0}(\td{x}_1', \bar{x})\goto 0.\]
When $\frac{x}{|x|}\goto (-1, 0, \cdots, 0),$ a direct calculation gives
\[\z^k_{\bar{\B}_0}(\td{x}_1', \bar{x})-V_{\bar{\B}_0}(\td{x}_1', \bar{x})
\goto \sqrt{1-\al^2}\lt(\sqrt{l_k^2+|\bar{x}|^2}-|\bar{x}|\rt).\]
We conclude
\be\label{exp.4}
\z^k_{\bar{\B}_0}(x)-V_{\bar{\B}_0}(x)\goto
\lt\{
\begin{aligned}
&\sqrt{1-\al^2}\lt(\sqrt{l_k^2+|\bar{x}|^2}-|\bar{x}|\rt),\,\,\mbox{when $\frac{x}{|x|}\goto(-1, 0, \cdots, 0)$}\\
&0,\,\,\mbox{otherwise.}
\end{aligned}
\rt.
\ee

Combining \eqref{exp.4} with \eqref{exp.1}, following the same argument as in Subsection \ref{constructionsub}, we obtain that
as $|x|\goto \infty,$ when $\frac{x}{|x|}\goto \F,$ $\lu^k_\F(x)-V_\F(x)\goto q\lt(\frac{x}{|x|}\rt).$ Similarly, we can analyze the asymptotic behaviors
of $\uu^k_\F(x)$ and $\td{\Psi}_\F(x).$ We verify that they satisfy for any compact set
$K\subset\R^n,$ there exists a $\td{\Psi}_\F(x)$ such that when $x\in K,$ $\td{\Psi}_\F(x)<\lu^k_\F(x)$ and as $|x|\goto\infty,$ $\td{\Psi}_\F(x)\geq\uu^k_\F(x).$ Moreover, for any $x\in\R^n,$ $\uu^k_\F(x)>\lu^k_\F(x).$

\subsection{Upper barrier, lower barrier, and spacelike cutoff function}
\label{uls}
We start with constructing the upper barrier. Consider $\lu_\F^1$ and $\uu^1_\F$ constructed in the
Subsection \ref{constructF}. It's easy to see that $\lu^1_\F(x)$ and $\uu^1_\F(x)$
are weak sub and super solutions to the prescribed mean curvature equation
\be
\left\{
\begin{aligned}
&\textup{div}\lt(\frac{Du}{\sqrt{1-|Du|^2}}\rt)=n\\
&|Du(x)|<1\,\,\text{for all}\,\,x\in\R^n.
\end{aligned}
\right.
\ee
Moreover, as $|x|\goto\infty$ for $\frac{x}{|x|}\in\F,$ we have
\be\label{barrier-asymptotic}
\uu^1_\F(x),\lu^1_\F(x)\goto q\lt(\frac{x}{|x|}\rt).
\ee
By Theorem 6.1 of \cite{CT} we know that there exists a strictly convex function $h(x)$ satisfying
\[\lu^1_\F(x)\leq h(x)\leq\uu^1_\F(x)\,\,\text{for all}\,\, x\in\R^n\]
and $\s_1(\ka[\M_h(x)])=n.$ In the rest of this paper, we will use this $h(x)$ as our upper barrier.
We will denote our lower barrier by $\lu(x),$ which is given by \eqref{lower barrier}. Note that our upper barrier
$h(x)$ is smooth while our lower barrier is only Lipschitz. In view of the discussions in Section \ref{properties of semitroughs},
Subsection \ref{constructcut}, and Subsection \ref{constructF}, we obtain the
following Lemma.
\begin{lemm}
\label{cutoff lemma}
For every compact subset $K\subset\R^n,$ there exists $\lambda\in(0, 1)$ small and $M_1>0$ large such that
the spacelike function $\td{\psi}_\F(x)$ constructed in \eqref{cutoff-function-F} satisfies
\be\label{cutoff1}
\td{\psi}_\F(x)-\lu(x)>c_0>0\,\,\text{in $K$}
\ee
and
\be\label{cutoff2}
\td{\psi}_\F(x)-h(x)\geq0\,\,\text{as $|x|\goto\infty$.}
\ee
\end{lemm}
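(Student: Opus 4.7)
The plan is to verify the two inequalities (1) and (2) separately, using $\lambda\in(0,1)$ and $M_1>0$ as tuning parameters that get coordinated at the end.

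For the compact-set bound (1), I would first observe that both $\lu=\lu^k_\F$ and $\td\psi_\F$ are suprema of the form $\sup_{y\in\dS^{n-1}}\{q(y)-M+A(x+p(y))\}$, with $A=\ubar\z^k_\F$ for $\lu$ and $A=\pM_\F$ for $\td\psi_\F$. Since $p$ is bounded on $\dS^{n-1}$, the translated set $\tilde K:=K+p(\dS^{n-1})$ is compact, and it suffices to show $\pM_\F(z)-\ubar\z^k_\F(z)\geq c_0$ uniformly for $z\in\tilde K$. Choose $M_1$ large enough that $\tilde K\subset B_{M_1 R_0}$, so that for every ball $\bar\B\supset\F$ with $\delta(\bar\B)\leq\pi-\delta_0$ one falls in the first regime of \eqref{cutoff-function}, giving
\[
\pM_{\bar\B}(z)=\sqrt{M_1^2\lambda^2+V_{\bar\B}^2(z)}\geq\sqrt{M_1^2\lambda^2+V_\F^2(z)}.
\]
Taking the infimum over $\bar\B\supset\F$ yields $\pM_\F(z)\geq\sqrt{M_1^2\lambda^2+V_\F^2(z)}$. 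Combined with the upper bound $\ubar\z^k_\F(z)\leq V_\F(z)+C$ from \eqref{cb0.1} and the elementary identity $\sqrt{M_1^2\lambda^2+V_\F^2}-V_\F=M_1^2\lambda^2/(\sqrt{M_1^2\lambda^2+V_\F^2}+V_\F)$, one obtains $\pM_\F-\ubar\z^k_\F\geq C+c_0$ on $\tilde K$ provided $M_1\lambda$ exceeds a threshold depending on $K$ and $C$. This establishes (1).

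For the infinity bound (2), I would exploit the fact that by construction $h\leq\uu^1_\F$ pointwise, reducing matters to showing $\td\psi_\F\geq\uu^1_\F$ as $|x|\to\infty$. Repeating the asymptotic analysis of Subsections~\ref{constructcut} and \ref{constructF} with $\pM_\F$ in place of $\ubar\z^k_\F$, the key point is that in the ``perpendicular'' directions (where $V_\F(x/|x|)=0$) the piecewise tail of $\pM_{\bar\B_+}$ contributes the extra constant $M_1(\lambda+1)$ to $\pM_{\bar\B_+}-V_{\bar\B_+}$ up to the Lorentz normalization of \eqref{cb1.2}, while in every other direction with $x/|x|\in\F$ one has $\pM_\F-V_\F\to 0$. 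After taking $\sup_y$ and absorbing the $q(y)-M$ correction exactly as in the analysis that led to \eqref{exp.4}, the same pattern governs $\td\psi_\F-V_\F$. On the other hand $\uu^1_\F-V_\F$ tends to the fixed constant $l_1$ in perpendicular directions and to $q(x/|x|)$ otherwise by the $k=1$ version of \eqref{super-asymptotic}. Since $q$ is bounded and $l_1$ is universal, choosing $M_1$ large enough (depending only on $q$, $n$, $l_1$, $\delta_0$, $M$) guarantees $\td\psi_\F\geq\uu^1_\F\geq h$ at infinity.

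To coordinate the parameters one first selects $M_1$ large enough for (2), then chooses $\lambda\in(0,1)$ (enlarging $M_1$ further if necessary) so that $M_1\lambda$ meets the threshold from (1); the spacelikeness of $\pM_{\bar\B_+}$ via Lemma~\ref{lem23} imposes only $\lambda\in(0,1]$, which is compatible. The main obstacle I anticipate is the careful bookkeeping of the asymptotic behavior of $\td\psi_\F$ in the perpendicular directions, where the sup over $y\in\dS^{n-1}$, the inf over $\bar\B\supset\F$, and the Lorentz parameter $\alpha$ interact nontrivially, in the same spirit as Lemma~\ref{sub-tricky direction lem}; once this asymptotic matching is in place, the rest is a clean positivity chain.
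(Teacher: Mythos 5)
The statement of Lemma~\ref{cutoff lemma} as printed contains a sign reversal in \eqref{cutoff1}, and your proof of that part follows the printed (incorrect) direction rather than the paper's actual argument. Everywhere else the paper insists the cutoff lies \emph{below} the subsolution on $K$: Subsubsection~\ref{sub1.2.1} says ``$\psi_K<\lu$ when $x\in K$''; the end of Subsection~\ref{constructcut} reads ``in a compact set $K\subset B_{R_0}(x)$, we can choose $\lambda>0$ small such that $\lu^k_{\bar{\B}_+}(x)-\td{\Psi}_{\bar{\B}_+}(x)>0$''; and the last line of Subsection~\ref{constructF} asserts ``when $x\in K$, $\td{\Psi}_\F(x)<\lu^k_\F(x)$.'' The inequality must go this way for the cutoff to be usable: the maximum-principle argument in \cite{WX} needs a spacelike function that starts below $\lu$ (hence below $u^J$) on $K$ and finishes above $h$ (hence above $u^J$) near infinity, so that $u^J-\Psi_K$ changes sign and furnishes an interior point with $Du^J=D\Psi_K$ bounded away from the light cone. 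A function sitting above both $\lu$ on $K$ and $h$ at infinity, which is what you construct, cannot play this role. So \eqref{cutoff1} should read $\lu(x)-\td\psi_\F(x)>c_0>0$ in $K$.

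Consequently your mechanism for (1) is inverted. You take $M_1\lambda$ large to push $\pM_\F$ up past $\ubar{\z}^k_\F$ on the compact set; the paper takes $\lambda$ small so that $\pM_\F(z)=\inf_{\bar\B}\sqrt{M_1^2\lambda^2+V_{\bar\B}^2(z)}$ is close to $\inf_{\bar\B}V_{\bar\B}(z)$ on compact subsets, while $\ubar{\z}^k_\F$ exceeds $V_\F$ (and in fact exceeds $\inf_{\bar\B\supset\F}V_{\bar\B}$) by a fixed positive amount there, by Lemma~\ref{cblem0}(2) and Lemma~\ref{cblem1}(2); that strict gap survives under the $\sup_y$ with the $q(y)-M$ shift because both $\lu$ and $\td\psi_\F$ carry the same shift, so it transfers directly to $\lu-\td\psi_\F>c_0$ on $K$. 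Your choice ``$M_1\lambda$ large'' also conflicts with the ``$\lambda\in(0,1)$ small'' stipulated in the statement. As a secondary point, the representation $\pM_{\bar\B}(z)=\sqrt{M_1^2\lambda^2+V_{\bar\B}^2(z)}$ for $z$ in the small-$|z|$ regime is only literal for $\bar\B=\bar\B_+$; for general $\bar\B$, $\pM_{\bar\B}$ is the Lorentz transform of $\pM_{\bar\B_+}$, and the cylindrical piece of $\sqrt{\lambda^2+V_{\bar\B_+}^2}$ is not Lorentz-invariant, so the formula should be handled via \eqref{exp.2}--\eqref{exp.3}, as the paper does for the semitroughs.

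Your treatment of \eqref{cutoff2} is sound and matches the paper's route: reducing $\td\psi_\F\geq h$ at infinity to $\td\psi_\F\geq\uu^1_\F$ via the sandwich $\lu^1_\F\leq h\leq\uu^1_\F$ of Subsection~\ref{uls}, then comparing the perpendicular-direction asymptotic constant $M_1(\lambda+1)$ against $l_1$ (and against $\uu^1_\F-V_\F\leq l_1/(\sqrt{l_1^2+x_n^2}+x_n)$ more quantitatively via \eqref{cut1.1}), and noting that in all other directions both quantities converge to $q(x/|x|)$. Picking $M_1$ large with $\lambda\in(0,1)$ fixed then closes (2) without interfering with (1) once (1) is argued with small $\lambda$.
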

This Lemma will be used to obtain the local $C^1$ estimates.

\section{Legendre transform of barrier functions}
\label{ltbf}
For any given $\mathcal{F}\subset\mathbb{S}^{n-1}$,
since we can view every ball $\mathcal{B}$ satisfying $\bar{\mathcal{B}}\subset \mathcal{F}$ and $\delta_0\leq\delta(\bar{\mathcal{B}})\leq \frac{\pi}{2}$, as a point $(x,r)\in\mathbb{R}^{n+1}$, where $x,r$ are the center and radius of $\bar{\mathcal{B}}$ respectively. It is clear that the set of all $(x,r)$ is a compact set with respect to the standard metric of $\mathbb{R}^{n+1}$. Therefore, using  the standard compactness argument, we have for any given $x$, there exists some ball $\mathcal{B}$ such that
\be\label{ltbf0}
\ubar{\z}^k_\F(x)=\z^k_{\bar{\mathcal{B}}}(x).
\ee

\begin{lemm}
\label{ltbflem1}
Let $\lu(x)$ be the subsolution constructed in Subsection \ref{constructF}. Then $\lu(x)$ is a strictly convex and spacelike function over $\R^n.$
\end{lemm}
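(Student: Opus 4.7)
I would exploit the representation
\[
\lu(x) \;=\; \sup_{y \in \dS^{n-1}}\ \sup_{\bar{\B} \subset \F,\ \delta(\bar{\B}) \geq \delta_0}\ \bigl[q(y) - M + \z^k_{\bar{\B}}(x + p(y))\bigr],
\]
together with two structural properties of each individual semitrough $\z^k_{\bar{\B}}$: strict convexity on $\R^n$, and the uniform local Lipschitz bound supplied by Lemma \ref{cblem1}(3).

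First I would argue that the double supremum is \emph{attained} at every $x \in \R^n$. The inner supremum is attained by \eqref{ltbf0}; the outer supremum is then the supremum of a continuous function over the compact sphere $\dS^{n-1}$ (continuity follows from $q \in C^{2,1}$, which makes $p(y) = Dq(y) + My$ continuous, and $\ubar{\z}^k_\F$ being Lipschitz). Thus there exist $y_0 \in \dS^{n-1}$ and an admissible ball $\bar{\B}_0 \subset \F$ with $\lu(x) = q(y_0) - M + \z^k_{\bar{\B}_0}(x + p(y_0))$.

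For strict convexity, fix $x_1 \neq x_2$ and $\lambda \in (0,1)$, set $x_\lambda = \lambda x_1 + (1-\lambda) x_2$, and select $(y_0, \bar{\B}_0)$ realising the sup at $x_\lambda$. The trivial bounds $\lu(x_i) \geq q(y_0) - M + \z^k_{\bar{\B}_0}(x_i + p(y_0))$ for $i=1,2$, combined with the strict convexity of the single semitrough $\z^k_{\bar{\B}_0}$ along the nondegenerate segment from $x_1 + p(y_0)$ to $x_2 + p(y_0)$, immediately yield $\lu(x_\lambda) < \lambda \lu(x_1) + (1-\lambda) \lu(x_2)$.

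For the spacelike property I would prove the stronger statement that $\lu$ is locally Lipschitz with constant strictly below $1$. Given a compact $K \subset \R^n$, the set $K' := K + p(\dS^{n-1})$ is compact by continuity of $p$. Lemma \ref{cblem1}(3) supplies a uniform $\upsilon_{K'} \in (0, 1]$ such that $|\z^k_{\bar{\B}}(z_1) - \z^k_{\bar{\B}}(z_2)| \leq (1 - \upsilon_{K'}) |z_1 - z_2|$ for all admissible $\bar{\B}$ and all $z_1, z_2 \in K'$. Taking $(y_0, \bar{\B}_0)$ to realise the sup at $x_1$ and using $\lu(x_2) \geq q(y_0) - M + \z^k_{\bar{\B}_0}(x_2 + p(y_0))$, together with the symmetric inequality, gives $|\lu(x_1) - \lu(x_2)| \leq (1 - \upsilon_{K'}) |x_1 - x_2|$ for all $x_1, x_2 \in K$. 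This forces $|D\lu| < 1$ at every point of differentiability of the convex function $\lu$, which is the spacelike condition.

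The only subtle step is the spacelike bound: taking a supremum of functions with Lipschitz constant $<1$ a priori yields only Lipschitz $\leq 1$, and the strict gap we need comes from the \emph{uniform} constant $\upsilon_{K'}$ in Lemma \ref{cblem1}(3), which in turn relies on the geometric constraint $\delta(\bar{\B}) \geq \delta_0$. The translation by $p(y)$ is handled by enlarging $K$ to $K' = K + p(\dS^{n-1})$ before invoking the lemma; everything else is a direct transfer of properties from each individual semitrough to the supremum.
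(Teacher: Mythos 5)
Your proof is correct, and for the strict convexity part it is essentially identical to the paper's: both select, at the convex-combination point, a maximizing pair $(y_0,\bar{\B}_0)$ using \eqref{ltbf0}, apply strict convexity of the single semitrough $\z^k_{\bar{\B}_0}$, and bound from below by the definition of $\lu$ at the endpoints.

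For the spacelike part your route differs somewhat from the paper's. The paper fixes the optimal $(y_x,\bar{\B}_x)$ at $x$, writes $\lu(x)-\lu(y)\leq \z^k_{\bar{\B}_x}(x+p(y_x))-\z^k_{\bar{\B}_x}(y+p(y_x))<|x-y|$ using that each semitrough has $|D\z^k_{\bar{\B}_x}|<1$, and stops there: it concludes ``spacelike'' from this pointwise strict Lipschitz inequality (which for a convex function does yield $|D\lu|<1$ a.e., since the difference quotient of a convex function is monotone in $t$). You instead invoke Lemma \ref{cblem1}(3) to obtain a Lipschitz constant $1-\upsilon_{K'}$ that is \emph{uniform} over both the compact set and the family of admissible balls, and then pass it through the supremum. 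This is a genuinely stronger quantitative statement; as you correctly observe, a supremum of functions each with pointwise Lipschitz constant $<1$ need not in general have Lipschitz constant $<1$, and the uniform constant sidesteps that concern without needing to lean on convexity. Both arguments are valid, but yours is more self-contained and produces the sharper local estimate, which is in fact what is actually used downstream (e.g., in Lemma \ref{cblem-domain} and the Legendre transform arguments). One minor remark: when invoking Lemma \ref{cblem1}(3), you should note that all admissible balls $\bar{\B}\subset\F$ automatically have $\delta(\bar{\B})\leq\pi-\delta_0$ (with the $\delta_0$ of Lemma \ref{cblem2}), so the lemma's hypothesis on the radius is met uniformly; this is needed for the constant $\upsilon_{K'}$ to be ball-independent.
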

\begin{proof}
For any $x_1\neq x_2$, in view of \eqref{lower barrier} and \eqref{ltbf0}, we have
\be\label{ltbf1}
\begin{aligned}
\lu\lt(\frac{x_1+x_2}{2}\rt)&=\sup\limits_{y\in\dS^{n-1}}\tz^k_{\F}\lt(\frac{x_1+x_2}{2}, y\rt)\\
&=\tz^k_{\F}\lt(\frac{x_1+x_2}{2}, y_0\rt)=q(y_0)-M+\z^k_{\bar{\B_0}}\lt(\frac{x_1+x_2}{2}+p(y_0)\rt).\\
\end{aligned}
\ee
By the strict convexity of $\z^k_{\bar{\B_0}},$ we get
\begin{align*}
\lu\lt(\frac{x_1+x_2}{2}\rt)&<q(y_0)-M+\frac{1}{2}\z^k_{\bar{\B_0}}\lt(x_1+p(y_0)\rt)
+\frac{1}{2}\z^k_{\bar{\B_0}}\lt(x_2+p(y_0)\rt)\\
&\leq\frac{\lu(x_1)+\lu(x_2)}{2}.
\end{align*}
This proves that $\lu(x)$ is strictly convex.

Next, for any $x, y\in\R^n,$ since $\lu(x)=q(y_x)-M+\z^k_{\bar{\B_x}}\lt(x+p(y_x)\rt),$
$\lu(y)\geq q(y_x)-M+\z^k_{\bar{\B_x}}\lt(y+p(y_x)\rt),$ and $\z^k_{\bar{\B_x}}$ is spacelike we obtain
\[\lu(x)-\lu(y)<|x-y|.\]
Similarly, we derive
\[\lu(y)-\lu(x)<|x-y|.\]
This proves that $\lu(x)$ is spacelike.
\end{proof}

Therefore, we can define the Legendre transform of
$\lu$ as follows
\[\lus(\xi):= \sup\limits_{x\in\R^n}\lt(\xi\cdot x-\lu(x)\rt).\]
\begin{lemm}
\label{cblem-domain}
The domain of $\lus(\xi),$ which we will denote by $D^*,$ is
\[\hat{F}=\text{Conv}(\F)\subseteq\bar{B}_1=\lt\{\xi\in\R^n\big||\xi|\leq 1\rt\}.\]
\end{lemm}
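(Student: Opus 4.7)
\medskip

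\noindent\textbf{Proof proposal.} The plan is to bracket $\lu(x)$ between $V_\F(x)+c_1$ and $V_\F(x)+c_2$ for uniform constants, and then pair this with the identification of $\hat F$ as the set on which the support function $V_\F$ dominates $\xi\cdot x$.

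First, I would establish the asymptotic comparison $|\lu(x)-V_\F(x)|\leq C$ for all $x\in\R^n$. The upper bound follows from the definition \eqref{lower barrier} together with Lemma \ref{cblem0}(1), which gives $\ubar\z^k_\F(z)\leq V_\F(z)+C$, and the estimate $V_\F(x+p(y))\leq V_\F(x)+|p(y)|$, while boundedness of $q$ and of $|p(y)|=|Dq(y)+My|$ on $\dS^{n-1}$ controls the remaining terms. For the lower bound, pick any fixed $y_0\in\dS^{n-1}$; then $\lu(x)\geq q(y_0)-M+\ubar\z^k_\F(x+p(y_0))\geq V_\F(x)-C'$ by the same Lemma \ref{cblem0}(1).

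Next I would show $\hat F\subseteq D^*$. Given $\xi\in\hat F=\mathrm{Conv}(\F)$, write $\xi=\sum\alpha_i\lambda_i$ with $\lambda_i\in\F$ and $\sum\alpha_i=1$, so that $\xi\cdot x\leq \sum\alpha_i V_\F(x)=V_\F(x)$ for every $x\in\R^n$. Combined with the lower bound just obtained, this gives $\xi\cdot x-\lu(x)\leq V_\F(x)-\lu(x)\leq C'$, hence $\lus(\xi)\leq C'<\infty$. Since $\F\subseteq\dS^{n-1}\subset\bar B_1$ and $\bar B_1$ is convex, we also get $\hat F\subseteq\bar B_1$.

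For the reverse inclusion $D^*\subseteq\hat F$, suppose $\xi_0\notin\hat F$. Because $\hat F$ is closed and convex, the standard separating-hyperplane theorem produces a unit vector $\eta\in\dS^{n-1}$ and a number $\delta>0$ with $\xi_0\cdot\eta\geq V_\F(\eta)+\delta$ (where $V_\F(\eta)=\sup_{\lambda\in\hat F}\lambda\cdot\eta$). Evaluating at $x=t\eta$ for $t>0$ and using the upper bound $\lu(t\eta)\leq tV_\F(\eta)+C$ derived above,
\begin{equation*}
\lus(\xi_0)\geq \xi_0\cdot(t\eta)-\lu(t\eta)\geq t(\xi_0\cdot\eta-V_\F(\eta))-C\geq t\delta-C,
\end{equation*}
which tends to $+\infty$ as $t\to\infty$, so $\xi_0\notin D^*$. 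Combining the two inclusions yields $D^*=\hat F\subseteq\bar B_1$.

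The proof is essentially a routine application of convex duality, the only ingredient requiring care being the two-sided linear control of $\lu$ by $V_\F$; but this control is already packaged in Lemma \ref{cblem0}(1) and the $C^0$-boundedness of $q$ and $p$, so no genuine obstacle is expected.
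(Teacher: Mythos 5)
Your proof is correct and follows essentially the same route as the paper: the key input in both is the two-sided bound $|\lu(x)-V_\F(x)|\leq C$ from Lemma \ref{cblem0}(1), after which $D^*$ is identified as the dual body of $V_\F$. The only difference is one of packaging: the paper introduces $\Chi_{V_{\lu}}(0)$ and cites Lemma 4.6 of \cite{CT} for the identification $\Chi_{V_{\lu}}(0)=\hat F$, then argues Step 2 by choosing $x_1$ from the definition of $\Chi_{V_{\lu}}(0)$ and scaling by homogeneity, whereas you bypass that intermediary and prove both inclusions directly from Carath\'eodory (for $\hat F\subseteq D^*$) and the separating hyperplane theorem (for $D^*\subseteq\hat F$). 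Your version is slightly more self-contained, since it does not lean on the external reference to \cite{CT} for the support-function duality.
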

\begin{proof}
By the definition of Legendre transform we know
\[D^*:=\{\xi\in\bar{B}_1| \sup\limits_{x\in\R^n}(\xi\cdot x-\lu(x))<\infty\}.\]
Let's denote
\[V_{\lu}(x)=\lim\limits_{r\goto\infty}\frac{\lu(rx)}{r}.\]
In view of Lemma \ref{cblem0}, we have $V_{\lu}=V_{\F}$.
Then following the proof of Lemma 4.6 of \cite{CT} we have
\[\Chi_{V_{\lu}}(0):=\{p\in\R^n| V_{\lu}(x)\geq p\cdot x+V_{\lu}(0)\,\,\mbox{for all $x\in\R^n$}\}=\hat{F}.\]
Next, we want to show $D^*=\Chi_{V_{\lu}}(0).$

\textbf{Step 1.} In this step, we will show $\Chi_{V_{\lu}}(0)\subseteq D^*.$ For any $\xi_0\in \Chi_{V_{\lu}}(0),$ by the definition of
$\Chi_{V_{\lu}}(0)$ we have
\be\label{cb0.4}
V_{\lu}(x)\geq \xi_0\cdot x\,\,\mbox{for all $x\in\R^n.$}
\ee
From Lemma 4.3 of \cite{CT} we can see
\be\label{cb0.5}
V_{\lu}(x)=\sup\limits_{\xi\in\F}x\cdot\xi=V_{\F}(x).
\ee
Moreover, by Lemma \ref{cblem0} we know that there exists $C>0$ such that
\be\label{cb0.6}
|\lu(x)-V_{\F}(x)|\leq C\,\,\mbox{for all $x\in\R^n$.}
\ee
Combining equations \eqref{cb0.4}-\eqref{cb0.6} we conclude that
\[\xi_0\cdot x-\lu(x)\leq\xi_0\cdot x-V_{\F}(x)+C\leq C\,\,\mbox{for all $x\in\R^n$,}\]
thus Step 1 is proved.

\textbf{Step 2.} We will show $D^*\subseteq \Chi_{V_{\lu}}(0).$ This is equivalent to show that
if $\xi\in \Chi_{V_{\lu}}(0)^c,$ then $\xi\notin D^*.$
Now, let's choose any $\xi_1\in\Chi_{V_{\lu}}(0)^c.$ Then, by the definition of $\Chi_{V_{\lu}}(0)$ we know there exists
$x_1\in\R^n\setminus\{0\}$ such that
\[\xi_1\cdot x_1-V_{\lu}(x_1)=c_0>0.\]
Since both $V_{\lu}(x)$ and $\xi_1\cdot x$ are homogenous of degree one, we have
\[r\xi_1\cdot x_1-V_{\lu}(rx_1)=rc_0\goto\infty\,\,\mbox{as $r\goto\infty$}.\]
In view of \eqref{cb0.6} we know $\xi_1\notin D^*.$ This completes the proof of Step 2 thus the Lemma.
\end{proof}

\begin{lemm}
\label{cblem5}
For any $\xi\in\text{Int}D^*,$ there exists a unique $x_0\in\R^n$ such that
\[\lus(\xi)=x_0\cdot\xi-\lu(x_0).\]
\end{lemm}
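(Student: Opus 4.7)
The plan is to establish existence via a coercivity argument and uniqueness via the strict convexity already proved in Lemma~\ref{ltbflem1}. By definition $\lus(\xi) = \sup_{x\in\R^n}(\xi\cdot x - \lu(x))$, so I need to show that for $\xi\in\text{Int}\,D^*=\text{Int}\,\hat F$ the function $x\mapsto \xi\cdot x - \lu(x)$ attains its supremum at exactly one point.

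For existence, I would first argue coercivity: $\xi\cdot x - \lu(x)\to-\infty$ as $|x|\to\infty$. The key observation is that since $\F$ spans the same supporting cone as $\hat F=\conv(\F)$, we have $V_{\F}(x)=V_{\hat F}(x)=\sup_{\lambda\in\hat F}\lambda\cdot x$. If $\xi\in\text{Int}\,\hat F$, choose $\varepsilon>0$ with $\bar B_{\varepsilon}(\xi)\subset \hat F$; then for any unit vector $\theta$ the point $\xi+\varepsilon\theta\in\hat F$, so $V_{\F}(r\theta)\geq (\xi+\varepsilon\theta)\cdot(r\theta)=r\xi\cdot\theta+\varepsilon r$, i.e.\ $V_{\F}(x)\geq \xi\cdot x+\varepsilon|x|$ for all $x$. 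Combining this with the bound $\lu(x)\geq V_{\F}(x)-C$ from Lemma~\ref{cblem0} (part (1), applied inside the definition \eqref{lower barrier}, together with $-M+\ubar{\z}_{\F}^k\geq V_{\F}-C$ and the trivial bound $q(y)\leq\|q\|_\infty$) gives
\[
\xi\cdot x - \lu(x)\leq \xi\cdot x - V_{\F}(x)+C \leq -\varepsilon|x|+C,
\]
which tends to $-\infty$. Since $\lu$ is continuous (in fact Lipschitz, being spacelike by Lemma~\ref{ltbflem1}), the supremum is attained at some $x_0\in\R^n$.

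For uniqueness, suppose $x_0\neq x_1$ both achieve the supremum. By the strict convexity of $\lu$ proved in Lemma~\ref{ltbflem1},
\[
\lu\!\lt(\tfrac{x_0+x_1}{2}\rt)<\tfrac{\lu(x_0)+\lu(x_1)}{2},
\]
so
\[
\xi\cdot\tfrac{x_0+x_1}{2}-\lu\!\lt(\tfrac{x_0+x_1}{2}\rt) > \tfrac{1}{2}\bigl(\xi\cdot x_0-\lu(x_0)\bigr)+\tfrac{1}{2}\bigl(\xi\cdot x_1-\lu(x_1)\bigr)=\lus(\xi),
\]
contradicting the definition of $\lus(\xi)$. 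Hence $x_0$ is unique.

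The main obstacle is the coercivity step, which hinges on identifying $D^*$ with $\hat F=\conv(\F)$ (already done in Lemma~\ref{cblem-domain}) and extracting the linear gain $V_{\F}(x)-\xi\cdot x\geq \varepsilon|x|$ from $\xi\in\text{Int}\,\hat F$. Everything else is a soft compactness/continuity argument plus the strict convexity of $\lu$, both of which are already in hand.
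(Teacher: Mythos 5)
Your proposal is correct and follows essentially the same route as the paper: uniqueness is the identical strict-convexity midpoint argument, and existence rests on the same interior-gain observation ($\xi\in\text{Int}\,\hat F$ forces $V_\F(x)-\xi\cdot x$ to grow linearly), which the paper packages as a compactness/contradiction argument with the normalized directions $\theta_i=x_i/|x_i|$ rather than the explicit coercivity bound $\xi\cdot x-\lu(x)\leq -\varepsilon|x|+C$ that you write out. Both rely on the bound $|\lu-V_\F|\leq C$ already recorded in the proof of Lemma~\ref{cblem-domain}.
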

\begin{proof} \textbf{Step 1.} In this step we will show that for any $\xi\in\text{Int}D^*,$
$\sup\limits_{x\in \R^n}\{x\cdot\xi-\lu(x)\}$ is achieved away from $\infty.$
We will prove it by contradiction. If not, then there exists a sequence $\{x_i\}$
with $|x_i|\goto\infty$ as $i\goto\infty,$ such that
\[|x_i\cdot\xi-\lu(x_i)-\lus(\xi)|<\frac{1}{i}.\]
Denoting $\theta_i:=\frac{x_i}{|x_i|},$ then we have
\[\frac{\lus(\xi)-\frac{1}{i}}{|x_i|}<\theta_i\cdot\xi-\frac{\lu(|x_i|\theta_i)}{|x_i|}<\frac{\lus(\xi)+\frac{1}{i}}{|x_i|}.\]
Choosing a convergent subsequence of $\{\theta_i\},$ which we still denote by $\{\theta_i\},$ and let $i\goto\infty,$ we get
\[\theta\cdot\xi-V_{\lu}(\theta)=0.\]
This contradicts the assumption that $\xi\in\text{Int}D^*.$ Hence, Step 1 is proved.

\textbf{Step 2.} We will show that $x_0$ is unique.
If not, let's assume for some $\xi\in\text{Int}D^*,$ there exist $x_1, x_2\in\R^n$ such that
\[\lus(\xi)=\xi\cdot x_1-\lu(x_1)=\xi\cdot x_2-\lu(x_2).\]
Since $\lu$ is strictly convex, we have
\[\frac{\lu(x_1)+\lu(x_2)}{2}>\lu\lt(\frac{x_1+x_2}{2}\rt).\]
By a straightforward calculation we obtain,
\begin{align*}
&\xi\cdot\lt(\frac{x_1+x_2}{2}\rt)-\lu\lt(\frac{x_1+x_2}{2}\rt)\\
&>\xi\cdot\lt(\frac{x_1+x_2}{2}\rt)-\frac{\lu(x_1)+\lu(x_2)}{2}\\
&=\xi\cdot\lt(\frac{x_1+x_2}{2}\rt)-\frac{2\lu(x_1)+\xi\cdot(x_2-x_1)}{2}\\
&=\xi\cdot x_1-\lu(x_1)=\lus(\xi).
\end{align*}
This leads to a contradiction.
\end{proof}

From now on, we will say $D\lu^*(\xi)=x$ if
\be\label{def-gradient}
\lu^*(\xi)=x\cdot\xi-\lu(x).
\ee
We will show that the map $D\lu^*(\cdot): \text{Int}D^*\mapsto\R^n$ is continuous in the next Lemma.
\begin{lemm}
\label{cblem6}
$\lu^*$ is a convex $C^1$ function on $\text{Int}D^*$.
\end{lemm}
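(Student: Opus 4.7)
The plan is to handle convexity, pointwise differentiability, and continuity of the gradient in that order, with differentiability resting on the uniqueness statement in Lemma \ref{cblem5} and continuity of $D\lus$ coming from standard convex-analysis reasoning.

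Convexity is immediate: as a pointwise supremum of the affine functions $\xi \mapsto \xi\cdot x - \lu(x)$, the transform $\lus$ is convex on $D^*$. For pointwise differentiability, fix $\xi_0 \in \text{Int}\,D^*$ and identify the subdifferential $\partial\lus(\xi_0)$ via the Fenchel--Young equality:
\[
x \in \partial\lus(\xi_0) \Longleftrightarrow \lus(\xi_0) + \lu(x) = \xi_0\cdot x,
\]
i.e.\ iff $x$ realizes the supremum in the definition of $\lus(\xi_0)$. By Lemma \ref{cblem5} such an $x$ is unique, so $\partial\lus(\xi_0)$ is a singleton, which for a convex function on an open convex set is equivalent to Fr\'echet differentiability at $\xi_0$, with $D\lus(\xi_0)$ equal to that unique maximizer, in agreement with the notation \eqref{def-gradient}.

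To upgrade to $C^1$ regularity, I would verify that $\xi \mapsto D\lus(\xi)$ is continuous on $\text{Int}\,D^*$. Fix $\xi_0 \in \text{Int}\,D^*$, choose $\rho>0$ with $\overline{B_{2\rho}(\xi_0)} \subset \text{Int}\,D^*$, and note that $\lus$ is bounded by some constant $C$ on this compact set, being convex and finite on an open neighborhood of it. Writing $x_\xi := D\lus(\xi)$ for $\xi \in B_\rho(\xi_0)$, the subgradient inequality $\lus(\eta) \geq \lus(\xi) + (\eta-\xi)\cdot x_\xi$, tested with $\eta = \xi + \rho\, x_\xi/|x_\xi|$ whenever $x_\xi \neq 0$, yields the uniform bound $\rho|x_\xi| \leq 2C$. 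Then, given a sequence $\xi_n \to \xi_0$, any subsequential limit $x_\ast$ of $x_{\xi_n}$ satisfies, by continuity of $\lu$ and $\lus$, the identity $\lus(\xi_0) = \xi_0\cdot x_\ast - \lu(x_\ast)$, and uniqueness from Lemma \ref{cblem5} forces $x_\ast = D\lus(\xi_0)$. Hence $D\lus$ is continuous at $\xi_0$.

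The only delicate point I anticipate is the uniform local bound on the maximizers $x_\xi$ as $\xi$ varies near $\xi_0$. Should the above subgradient argument feel too slick, an alternative is to rule out the escape $|x_{\xi_n}|\to\infty$ in the spirit of Step 1 of Lemma \ref{cblem5}: dividing $\lus(\xi_n) = \xi_n\cdot x_{\xi_n} - \lu(x_{\xi_n})$ by $|x_{\xi_n}|$ and passing to the limit along a subsequence with $x_{\xi_n}/|x_{\xi_n}| \to \theta$ would produce a direction $\theta \in \dS^{n-1}$ with $\theta\cdot\xi_0 = V_\F(\theta)$, contradicting $\xi_0 \in \text{Int}\,D^* = \text{Int}\,\text{Conv}(\F)$ in view of Lemma \ref{cblem-domain}.
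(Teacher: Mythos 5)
Your proposal is correct and follows essentially the same route as the paper: identify $\partial\lus(\xi_0)$ with the set of maximizers in the Legendre transform, invoke Lemma~\ref{cblem5} to conclude it is a singleton (hence $\lus$ is differentiable there), and then prove continuity of $D\lus$ by a subsequential-limit argument that feeds back into the uniqueness of Lemma~\ref{cblem5}. The only real difference is presentational: you cite the Fenchel--Young biconditional and the "singleton subdifferential $\Rightarrow$ Fr\'echet differentiable" fact as known, whereas the paper derives the subdifferential uniqueness by hand (taking $\tilde\xi\in\partial\lu(y_0)$ and chaining inequalities); and for the uniform bound on the maximizers you give a direct subgradient estimate, while the paper appeals to the escape-to-infinity argument of Step 1 of Lemma~\ref{cblem5}, which is precisely your stated alternative.
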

\begin{proof} The definition of $\lu^*$ says it is the supreme of a family of linear functions, which implies $\lu^*\in C^{0,1}$. Moreover, in view of Lemma \ref{cblem5}, using a similar argument as the proof of the strict convexity of $\lu$, we obtain the convexity of $\lu^*$. In the following,
we will show $\lu^*\in C^1$.

Suppose $D\lu^*(\xi_0)=x_0$. The set of sub differential of $\lu^*$ is  defined by
$$\partial{\lu^*}(\xi_0)=\{\eta\in\mathbb{R}^n; \lu^*(\xi)-\lu^*(\xi_0)\geq \eta\cdot(\xi-\xi_0), \text{ for any } \xi\in D^*\}.$$ By the definition of Legendre transform, we have
$$\lu^*(\xi)\geq \xi\cdot x_0-\lu(x_0),$$ which implies
$$\lu^*(\xi)-\lu^*(\xi_0)\geq \xi\cdot x_0-\lu(x_0)-(\xi_0\cdot x_0-\lu(x_0))=x_0\cdot(\xi-\xi_0).$$ Therefore, we get $x_0\in\partial\lu^*(\xi_0)$.

Next, we will show $x_0$ is the unique element of $\partial\lu^*(\xi_0)$. Suppose $y_0\neq x_0$ is another element in $\p\lu^*(\xi_0)$. Since the sub differential set of $\lu$ is always nonempty.
We let $\tilde{\xi}\in\partial\lu(y_0)$, then for any $x\neq y_0$, we have
$$\lu(x)-\lu(y_0)\geq\tilde{\xi}\cdot(x-y_0),$$ which can be rewritten as
$$\tilde{\xi}\cdot y_0-\lu(y_0)\geq\tilde{\xi}\cdot x-\lu(x)\,\, \mbox{for $\forall x\in\R^n$}.$$
Therefore, we have $$\lu^*(\tilde{\xi})=\tilde{\xi}\cdot y_0-\lu(y_0).$$
Using $y_0\in\partial\lu^*(\xi_0)$, we have
$$\lu^*(\tilde{\xi})-\lu^*(\xi_0)\geq y_0\cdot(\tilde{\xi}-\xi_0),$$ which gives
$$y_0\cdot\xi_0-\lu(y_0)\geq \lu^*(\xi_0).$$ This contradicts Lemma \ref{cblem5}.
Thus, the notation $D\lu^*$ is the derivation of $\lu^*$.

Finally, let's show $D\lu^*$ is continuous on $\text{Int} D^*$. We will prove it by contradiction. If not, then there exists a sequence $\{\td{\xi}_i\}\goto \xi_0$ as $i\goto\infty,$
such that $D\lus(\td{\xi}_i)=\td{x}_i,$ $D\lus(\xi_0)=x_0,$ and $|\td{x}_i-x_0|>\delta_0>0$ for any $i\in\mathbb{N}.$
By the definition of Legendre transform we have
\[\td{x}_i\cdot\td{\xi}_i-\lu(\td{x}_i)>\td{\xi}_i\cdot x_0-\lu(x_0)\]
and
\[x_0\cdot\xi_0-\lu(x_0)>\xi_0\cdot\td{x}_i-\lu(\td{x}_i).\]
This implies
\be\label{cb0.7}
\td{\xi}_i\cdot\lt(\td{x}_i-x_0\rt)>\lu(\td{x}_i)-\lu(x_0)
\ee
and
\be\label{cb0.8}
\xi_0\cdot\lt(x_0-\td{x}_i\rt)>\lu(x_0)-\lu(\td{x}_i).
\ee
Using the same argument as in the Step 1 of Lemma \ref{cblem5}, we can see that the sequence $\{\td{x}_i\}$ is uniform bounded.
Choosing a convergent subsequence of $\{\td{x}_i\}$ which we still denote by $\{\td{x}_i\}$, we assume
$\{\td{x}_i\}\goto\td{x}_0$ as $i\goto\infty.$ Let $i\goto \infty,$ then from \eqref{cb0.7} and \eqref{cb0.8}
we conclude
\[\xi_0\cdot\lt(\td{x}_0-x_0\rt)=\lu(\td{x}_0)-\lu(x_0).\]
Applying Lemma \ref{cblem5} we get $\td{x}_0=x_0,$ which leads to a contradiction.
\end{proof}

\begin{lemm}
\label{cblem3}
Let $h^*$ be the Lengendre transform of $h.$ Then
$h^*(\xi)\leq\lu^*(\xi)$ for all $\xi\in D^*.$
\end{lemm}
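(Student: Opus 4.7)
The plan is to reduce the inequality of Legendre transforms to the pointwise inequality $\lu(x)\leq h(x)$ on all of $\R^n$. Indeed, once this pointwise bound is established, for each fixed $\xi\in D^*$ we have $\xi\cdot x-h(x)\leq \xi\cdot x-\lu(x)$ for every $x\in\R^n$, and taking the supremum over $x$ on both sides gives $\hs(\xi)\leq\lus(\xi)$ immediately from the definition of the Legendre transform. Thus the entire content of the lemma is the pointwise comparison of the $\sigma_k$ subsolution $\lu=\lu^k_\F$ against the CMC upper barrier $h$.

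To prove $\lu^k_\F(x)\leq h(x)$, I would first compare the two families of semitroughs using Lemma \ref{cblem1}(4), which tells us that for every closed ball $\bar\B\subset\dS^{n-1}$ with $\delta_0\leq\delta(\bar\B)\leq\pi-\delta_0$,
\[
\z^k_{\bar\B}(x)\,<\,\z^1_{\bar\B}(x)\qquad\text{for all }x\in\R^n.
\]
Taking the supremum of both sides over the admissible balls $\bar\B\subset\F$ in the definition \eqref{def-lz} immediately yields $\ubar{\z}^k_\F(x)\leq \ubar{\z}^1_\F(x)$. Since the same function $p(y)=Dq(y)+My$ and the same constant $M$ are used to build $\td{\z}^k_\F$ and $\td{\z}^1_\F$, and the sup over $y\in\dS^{n-1}$ preserves inequalities, the definition \eqref{lower barrier} gives
\[
\lu^k_\F(x)\,=\,\sup_{y\in\dS^{n-1}}\td{\z}^k_\F(x,y)\,\leq\,\sup_{y\in\dS^{n-1}}\td{\z}^1_\F(x,y)\,=\,\lu^1_\F(x).
\]

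Finally, I would invoke the construction of $h$ from Subsection \ref{uls}: $h$ is exactly the strictly convex CMC solution furnished by Theorem 6.1 of \cite{CT} that is sandwiched between the CMC sub/supersolutions, so $\lu^1_\F(x)\leq h(x)\leq \uu^1_\F(x)$ for every $x\in\R^n$. Chaining the two inequalities produces $\lu(x)=\lu^k_\F(x)\leq h(x)$ on $\R^n$, and the proof is finished by the Legendre-transform step described in the first paragraph.

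There is no real obstacle here: the only non-tautological ingredient is Lemma \ref{cblem1}(4), and it has already been proved. The lemma should be regarded as a clean consistency check that our $\sigma_k$ lower barrier respects the CMC upper barrier after taking Legendre duals, which will be used in the next section to control the boundary data $\varphi^{J*}$ of the Dirichlet problem \eqref{dirichlet-ball}.
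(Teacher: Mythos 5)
Your proof is correct, and it reaches the conclusion by a cleaner and slightly more complete route than the paper. The paper's proof picks the maximizers $x,y$ realizing $h^*(\xi)$ and $\lu^*(\xi)$ and writes
\[
h^*(\xi)-\lu^*(\xi)=x\cdot\xi-h(x)-y\cdot\xi+\lu(y)<(x-y)\cdot\xi+\lu(y)-\lu(x)\leq 0,
\]
where the first strict inequality tacitly uses $h(x)>\lu(x)$, and the second uses that $y$ is the unique maximizer of $\eta\mapsto\eta\cdot\xi-\lu(\eta)$ (Lemma \ref{cblem5}). So both arguments ultimately rest on the pointwise bound $\lu=\lu^k_\F\leq h$; the paper, however, never justifies this step, whereas you derive it explicitly from Lemma \ref{cblem1}(4) (namely $\z^k_{\bar\B}<\z^1_{\bar\B}$), propagate it through the sup constructions to get $\lu^k_\F\leq\lu^1_\F$, and then invoke the sandwich $\lu^1_\F\leq h$ from Subsection \ref{uls}. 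Your final step — once $\lu\leq h$ is known, the Legendre transform reverses the inequality directly from its definition — is also more economical than the paper's maximizer computation and avoids the cosmetic glitch in the paper that $(x-y)\cdot\xi+\lu(y)-\lu(x)$ is only $\leq 0$ (with equality if $x=y$), not strictly negative. In short: same essential mechanism, but you fill in the missing justification for $\lu\leq h$ and streamline the Legendre step.
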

\begin{proof}
For any $\xi\in\text{Int}D^*,$ there exist $x, y\in\R^n$ such that
$$h^*(\xi)=x\cdot\xi-h(x)$$
and
$$\lu^*(\xi)=y\cdot\xi-\lu(y).$$
Therefore
\begin{align*}
h^*(\xi)-\lu^*(\xi)&=x\cdot \xi-h(x)-y\cdot \xi+\lu(y)\\
&<(x-y)\cdot \xi+\lu(y)-\lu(x)<0,
\end{align*}
where the last inequality comes from $\lu(x)$ is strictly convex.
By continuity we have $h^*(\xi)\leq\lu^*(\xi)$ for all $\xi\in D^*.$
\end{proof}

\bigskip
\section{Construction of the convergence sequence}
\label{cs}
Following the step in \cite{WX}, after constructing $\lus,$ it's the time to solve the corresponding Dirichlet problem \eqref{dirichlet-ball}.
Unfortunately, the $\lus$ we constructed is only a $C^1$ function, thus the boundary value of the corresponding Dirichlet problem is only $C^1.$
However, for the sake of $C^2$ boundary estimates, we need at least $C^{1, 1}$ boundary data. A natural thing to do next is to smooth $\lus.$
Note that for technical reasons explained in Subsubsection \ref{sub1.2.2}, we need to obtain a supersolution of equation \eqref{dirichlet-ball} after the smoothing process. Therefore, in the following, we will use inf-convolution.

We will construct $\gjs$ in the following way. Let $\td{F}_j$ be a sequence of strictly convex, smooth open domain such that
$\td{F}_j\goto\td{F},$ $\td{F}_{j-1}\subsetneq\td{F}_j$ for each $j\geq 2,$ and $B_j(0)\subset D\lus(\td{F}_j).$
Here, $B_j(0)$ is a ball of radius $j$ centered at $0.$

Now, let  $\td{\lu}_j^*$ be the smooth extension of $\lus$ such that $\td{\lu}_j^*=\lus$ on $\td{F}_{j+1},$ $\td{\lu}_j^*$ is smooth outside $\td{F}_{j+2},$ and $\td{\lu}_j^*$ is a convex function defined on $\R^n.$ Let $\gjs(\xi)=\inf\limits_{\eta\in\mathbb{R}^n}\{\td{\lu}_j^*(\eta)+\frac{1}{2\e_j}|\xi-\eta|^2\},$ by Theorem 3.5.3 and
Corollary 3.3.8 of \cite{CS} we know that $\gjs\in C^{1, 1}_{loc}(\td{F}).$ Moreover, by Theorem 3.5.8 of \cite{CS},
well known results, and Lemma 4.2 of \cite{SI}, we know there exists
$\e_j>0$ so small that
\be\label{0.1}
0<\lus(\xi)-\gjs(\xi)<\frac{1}{j}\,\,\text{in}\,\,\td{F}_j,
\ee
\be\label{0.2}
|D\gjs(\xi)-D\lus(\xi)|<\frac{1}{j}\,\,\text{in}\,\,\td{F}_j,
\ee
and
\be\label{0.3}
F(\w\gas_{ik}\gjs_{kl}\gas_{lj})\leq\frac{1}{\binom{n}{k}^{\frac{1}{k}}}\,\,\text{in}\,\,\td{F}_j.
\ee
Here, $\w=\sqrt{1-|\xi|^2},$ $\gas_{ik}=\delta_{ik}-\frac{\xi_i\xi_k}{1+\w},$ $\gjs_{kl}=\frac{\T^2\gjs}{\T\xi_k\T\xi_l},$
and let
$\ka^*[\w\gas_{ik}\gjs_{kl}\gas_{lj}]=(\ka^*_1, \cdots, \ka^*_n)$ be the eigenvalues of the matrix $(\w\gas_{ik}\gjs_{kl}\gas_{lj}),$ then $F(\w\gas_{ik}\gjs_{kl}\gas_{lj})=\lt(\frac{\s_n}{\s_{n-k}}(\ka^*)\rt)^{\frac{1}{k}}.$

Since we can not find a literature that has the proof of \eqref{0.2}, we will include the proof here.
\begin{lemm}
\label{cslem0}
There exists $\e_j>0$ so that
\[|D\gjs(\xi)-D\lus(\xi)|<\frac{1}{j}\,\,\text{in}\,\,\td{F}_j.\]
\end{lemm}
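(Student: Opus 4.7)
The plan is to exploit the first--order optimality condition satisfied by the minimizer in the definition of $\gjs$, and then use the uniform continuity of $D\lus$ on a neighborhood of $\overline{\td{F}_j}$ in $\td{F}_{j+1}$, where $\td{\lu}_j^*$ agrees with $\lus$ and is therefore $C^1$ by Lemma \ref{cblem6}.

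First, for each $\xi\in\overline{\td{F}_j}$, the strict convexity in $\eta$ of $\td{\lu}_j^*(\eta)+\frac{1}{2\e_j}|\xi-\eta|^2$ (after a Tikhonov perturbation argument, or simply using the coercivity in $\eta$ since $\td{\lu}_j^*$ is convex) guarantees the existence of a unique minimizer $\eta_j(\xi)$. The Euler--Lagrange equation yields
\[
D\td{\lu}_j^*(\eta_j(\xi))=\frac{\xi-\eta_j(\xi)}{\e_j},
\]
and a standard computation shows $D\gjs(\xi)=\frac{\xi-\eta_j(\xi)}{\e_j}=D\td{\lu}_j^*(\eta_j(\xi))$.

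Second, I would show $\eta_j(\xi)\to\xi$ uniformly on $\overline{\td{F}_j}$ as $\e_j\to 0$. To do this, fix a compact neighborhood $K$ with $\overline{\td{F}_j}\subset K\Subset\td{F}_{j+1}$; on $K$ the function $\td{\lu}_j^*$ equals $\lus$ and is $C^1$, so $|D\td{\lu}_j^*|\leq L$ on $K$ for some $L=L(j)$. The minimality condition and a comparison with the trial point $\eta=\xi$ give $\td{\lu}_j^*(\eta_j(\xi))+\frac{1}{2\e_j}|\xi-\eta_j(\xi)|^2\leq\td{\lu}_j^*(\xi)$, and combining with the convex inequality $\td{\lu}_j^*(\xi)\leq\td{\lu}_j^*(\eta_j(\xi))+D\td{\lu}_j^*(\eta_j(\xi))\cdot(\xi-\eta_j(\xi))$ valid when $\eta_j(\xi)\in K$ yields
\[
\frac{1}{2\e_j}|\xi-\eta_j(\xi)|^2\leq L\,|\xi-\eta_j(\xi)|,
\]
so $|\xi-\eta_j(\xi)|\leq 2L\e_j$. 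A short bootstrap confirms $\eta_j(\xi)\in K$ once $\e_j$ is small enough (depending only on $j$ and $\mathrm{dist}(\overline{\td{F}_j},\partial K)$).

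Finally, using $D\gjs(\xi)=D\td{\lu}_j^*(\eta_j(\xi))=D\lus(\eta_j(\xi))$ together with the uniform continuity of $D\lus$ on $K$, the estimate $|\eta_j(\xi)-\xi|\leq 2L\e_j$ gives $|D\gjs(\xi)-D\lus(\xi)|<\frac{1}{j}$ on $\overline{\td{F}_j}$ once $\e_j$ is chosen sufficiently small. The main delicate point is the localization step: one must verify that the minimizer $\eta_j(\xi)$ does not escape from $K$ (where $\td{\lu}_j^*$ is $C^1$) into the region where $\td{\lu}_j^*$ is only the convex smooth extension; this is controlled by the a priori linear bound $|\eta_j(\xi)-\xi|\leq 2L\e_j$ derived above, which closes the argument.
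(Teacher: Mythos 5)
Your proof follows the same basic strategy as the paper: identify the minimizer $\eta_j(\xi)$ in the inf-convolution, show $D\gjs(\xi)=D\lus(\eta_j(\xi))$, bound $|\xi-\eta_j(\xi)|$ by $O(\e_j)$, and conclude via the continuity of $D\lus$ from Lemma~\ref{cblem6}. The paper obtains the identity $Dg^*_\e(\xi_0)=D\lus(\eta_0)$ by a translation-of-competitor argument rather than invoking the Euler--Lagrange condition directly, and it cites Proposition~2.1 of~\cite{AF} for the bound $|\eta_0-\xi_0|\leq 2\e M$ instead of deriving it. These are essentially cosmetic differences.

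There is, however, a small but genuine circularity in your localization step as written. You derive $|\xi-\eta_j(\xi)|\leq 2L\e_j$ by applying the convexity inequality at the point $\eta_j(\xi)$, which requires knowing $|D\td{\lu}_j^*(\eta_j(\xi))|\leq L$, and that gradient bound is only guaranteed when $\eta_j(\xi)\in K$ --- the very fact the bound is supposed to establish. Your remark that ``a short bootstrap closes the argument'' names the issue but does not break the loop. The clean fix is to center the subgradient inequality at $\xi$ rather than at $\eta_j(\xi)$: since $\xi\in\overline{\td{F}_j}\subset K$ one has $|D\td{\lu}_j^*(\xi)|\leq L$ unconditionally, and combining
\[
\td{\lu}_j^*(\eta_j(\xi))\geq\td{\lu}_j^*(\xi)+D\td{\lu}_j^*(\xi)\cdot(\eta_j(\xi)-\xi)
\]
with the comparison $\td{\lu}_j^*(\eta_j(\xi))+\frac{1}{2\e_j}|\xi-\eta_j(\xi)|^2\leq\td{\lu}_j^*(\xi)$ yields $|\xi-\eta_j(\xi)|\leq 2L\e_j$ directly, with no assumption on the location of $\eta_j(\xi)$. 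After that, choosing $\e_j<\mathrm{dist}(\overline{\td{F}_j},\partial K)/(2L)$ puts the minimizer in $K$, and the rest of your argument goes through.
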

\begin{proof}
Denote $g^*_\e(\xi):=\inf\limits_{\eta\in \R^n}\{\td{\lu}_j^*(\eta)+\frac{1}{2\e}|\xi-\eta|^2\}.$ Since $g^*_\e$ is $C^{1, 1},$
$D\lus(\cdot)$ is continuous (by Lemma \ref{cblem6}), and $\overline{\td{F}_j}$ is compact, we only need to show for a fixed
$\xi_0\in\td{F}_j,$ there exists an $\e>0$ small, such that
\[|Dg^*_\e(\xi_0)-D\lus(\xi_0)|<\frac{1}{j}.\]
Now assume
\[g^*_\e(\xi_0)=\lus(\eta_0)+\frac{1}{2\e}|\eta_0-\xi_0|^2.\]
We also notice that for any $\xi\in\R^n$
\[g^*_\e(\xi)\leq\lus(\xi-\xi_0+\eta_0)+\frac{1}{2\e}|\eta_0-\xi_0|^2.\]
Therefore, we obtain
\be\label{0.4}
g^*_\e(\xi_0)-g^*_\e(\xi)\geq \lus(\eta_0)-\lus(\xi-\xi_0+\eta_0).
\ee
Now let $\xi=\xi_0+hv,$ where $v\in\mathbb{S}^{n-1}$ is an arbitrary unit vector and $h>0$ small.
Then from \eqref{0.4} we get
\[\frac{g^*_\e(\xi_0)-g^*_\e(\xi_0+hv)}{h}\geq \frac{\lus(\eta_0)-\lus(\eta_0+hv)}{h}\geq-D\lus(\eta_0+hv)\cdot v.\]
Let $h\goto 0$ we derive
\[-Dg^*_\e(\xi_0)\cdot v\geq-D\lus(\eta_0)\cdot v.\]
By Proposition 2.1 of \cite{AF} we know $\eta_0\in B_{2\e M}(\xi_0),$ where $M$ is the Lipschitz constant of $\lus$ in $\td{F}_j.$
Moreover, by Lemma \ref{cblem6} we know $D\lus$ is continuous. Therefore, there exists $\delta=\delta(\e)>0$ such that
\[-Dg^*_\e(\xi_0)\cdot v\geq-D\lus(\xi_0)\cdot v-\delta\]
which is equivalent to
\[|Dg^*_\e(\xi_0)-D\lus(\xi_0)|\leq\delta.\]
Furthermore, we can see that as $\e\goto 0,$ $\eta_0\goto\xi_0,$ and $\delta\goto 0.$
Thus Lemma \ref{cslem0} is proved.
\end{proof}

Now, we consider the following Dirichlet Problem
\be\label{cs1.2}
\left\{
\begin{aligned}
F(\w\gas_{ik}\ujs_{kl}\gas_{lj})&=\frac{1}{\binom{n}{k}^{\frac{1}{k}}}\,\,\text{in $\td{F}_j$}\\
\ujs&=\vjs\,\,\text{on $\partial\td{F}_j,$}
\end{aligned}
\right.
\ee
where $\vjs=\gjs|_{\partial\td{F}_j}.$

By Section 5 of \cite{WX} we know that \eqref{cs1.2} is solvable, denote the solution by $\ujs.$ We also denote the Legendre transform
of $\ujs$ by $\uj.$ Applying the spacelike cutoff function $\td{\Psi}_{\F}$ constructed in Subsectrion
\ref{constructF}, following the argument of Section 6 in \cite{WX}, we conclude that there exists a subsequence of $\{\uj\}$
that converges to the desired entire solution $u$ satisfying \eqref{int1.0} and \eqref{int1.0'}. This completes the proof of
Theorem \ref{intthm1}.

\bigskip

\section{Uniqueness results for CMC hypersurfaces}
\label{uniqueness}
The uniqueness of the solutions to equations \eqref{int1.0} and \eqref{int1.0'} is an open problem. No results are known even
for CMC hypersurfaces. In this section, we will discuss about this problem.

By the Maclaurin's inequality, it's easy to see
if $\lu$ is a subsolution of equation $\s_k={n \choose k},$ then it is also a subsolution of equation $\s_l={n\choose l},$ for any $l<k.$
Thus, in view of Section \ref{construction}, for each $1\leq k<n,$ we can construct more than one subsolutions. It's natural to ask: can we obtain different solutions by using different subsolutions?

To make the problem simpler, we only consider the case when the Gauss map image is a half disc $\bar{B}_1^+$ and the perturbation $q\equiv 0.$
Due to technical reasons we can not answer this simplified question for the case when $k>1$. When $k=1$, we prove the following uniqueness result. This result makes us believe that, in general, our method cannot provide more then one solution to equations \eqref{int1.0} and \eqref{int1.0'}.

\begin{theo}
\label{unithm1}
Let $\M_u$ be an $n$-dimensional CMC hypersurfaces satisfying following conditions:

(1). $\sigma_1(\ka[\M_u])=n;$

(2).When $\frac{x}{|x|}\in\bar{\mathcal{B}}_+$ or $x_1\goto-\infty, |\bar{x}|\goto\infty,$ we have
$u(x)-V_{\bar{\mathcal{B}}_+}(x)\goto 0$ as $|x|\goto\infty$;

(3).$|u-V_{\bar{\mathcal{B}}_+}(x)|\leq C_0$ for any $x\in\R^n.$

Then $u$ is the standard semitrough $\z^1.$ Here $\bar{\B}_+=\dS^{n-1}\cap\{x_1\geq 0\}$ and $\bar{x}=(x_2, \cdots, x_n).$
\end{theo}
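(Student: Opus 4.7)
The plan is to reduce the uniqueness to a boundary comparison in Legendre dual coordinates on the Gauss map image. Both $u$ and $\z^1$ are convex spacelike CMC graphs, and by the convex analysis of Lemma \ref{cblem-domain} applied to $u$ (using conditions (2) and (3) to establish $V_u = V_{\bar{\B}_+}$) their Gauss map images coincide with $B_1^+$. The Legendre transforms $u^*$ and $(\z^1)^*$ are thus convex functions on $\overline{B_1^+}$ satisfying the same dual PDE $F(\w\gas_{ik}u^*_{kl}\gas_{lj}) = 1/n$ on $B_1^+$, which is uniformly elliptic on compact subsets (Subsection 2.3 of \cite{WX}). If one establishes $u^* = (\z^1)^*$ on the full boundary $\p B_1^+$, the comparison principle for this dual equation (as used in Section 5 of \cite{WX}) forces $u^* \equiv (\z^1)^*$ on $B_1^+$, whence $u \equiv \z^1$.

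The boundary $\p B_1^+$ decomposes into the spherical part $\p B_1 \cap \bar{\B}_+$ and the flat part $\p_0 B_1^+ := \{\xi_1 = 0\} \cap B_1$. On the spherical part, the first clause of (2) gives directly $u^*(\theta) = -\lim_{r \to \infty}(u(r\theta) - r) = 0 = (\z^1)^*(\theta)$ by Lemma \ref{cblem1}. On the flat part, the identification $Du(\R^n) = B_1^+$ combined with convexity ensures that $u(x_1, \bar x)$ is monotone increasing in $x_1$ with a finite limit $\phi(\bar x) := \lim_{x_1 \to -\infty} u(x_1, \bar x)$, which is itself convex and satisfies $\abs{\phi(\bar x) - \abs{\bar x}} \leq C_0$ by (3) and $\phi(\bar x) - \abs{\bar x} \to 0$ as $\abs{\bar x} \to \infty$ by the second clause of (2). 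Interior $C^2$ estimates for the CMC equation let one pass to the limit, identifying $\phi$ as a convex entire spacelike CMC graph in the lower-dimensional Minkowski space $\R^{n-1,1}$ with $\sigma_1 = n$ and asymptotic to $\abs{\bar x}$ in every direction. A classical hyperboloid characterization (obtained, for instance, by moving planes to establish rotational symmetry and then solving the resulting ODE, or via induction on dimension) pins down $\phi(\bar x) = \sqrt{l_1^2 + \abs{\bar x}^2}$, yielding $u^*(0, \bar\xi) = -l_1 \sqrt{1 - \abs{\bar\xi}^2} = (\z^1)^*(0, \bar\xi)$.

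The main obstacle is the analysis on the flat boundary $\p_0 B_1^+$. Condition (3) alone only provides a bounded difference $u^* - (\z^1)^*$ on $\overline{B_1^+}$, so the argument genuinely depends on the second clause of (2) and on the lower-dimensional structure emerging in the degenerate limit $u_{x_1} \to 0$ as $x_1 \to -\infty$. Two technical points demand care: (a) extracting a smooth limit $\phi$ and verifying that it satisfies the CMC equation in $\R^{n-1,1}$, which should follow from interior regularity together with the Lipschitz control $\abs{Du} < 1$; and (b) invoking the hyperboloid characterization in $\R^{n-1,1}$, which reduces the problem to a rotationally symmetric ODE once the symmetries are established. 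Together these two steps close the argument.
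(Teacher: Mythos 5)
Your proposal shares the paper's essential technical step --- taking the limit $x_1 \to -\infty$ to produce an $(n-1)$-dimensional CMC graph in $\R^{n-1,1}$ and identifying it as the hyperboloid $\sqrt{l_1^2+|\bar x|^2}$ --- but it diverges at the closing argument, and that divergence introduces gaps. The paper closes by observing that $u(x)-\z^1(x)\goto 0$ as $|x|\goto\infty$ (in the $\bar{\B}_+$-directions from condition (2), in the remaining directions from the limit $\tilde u$ and the second clause of (2)), and then applies the maximum principle directly to $u-\z^1$ in the primal coordinates, where the CMC operator is non-degenerate elliptic. You instead route the conclusion through the Legendre dual and a comparison principle for the dual equation on $B_1^+$. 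This has two genuine problems. First, the dual operator $F(\w\gas_{ik}\us_{kl}\gas_{lj})$ degenerates at $\p B_1^+$, since $\w=\sqrt{1-|\xi|^2}\goto 0$ on the spherical part; the comparison principle you cite from Section~5 of \cite{WX} is established for non-degenerate approximating Dirichlet problems on subdomains $\td F_J\Subset\td F$, not for the full degenerate problem on $B_1^+$ with prescribed boundary data up to the degenerate boundary. Justifying comparison there would require an additional barrier or approximation argument that your sketch does not supply. Second, the asserted spherical boundary identity $u^*(\theta)=-\lim_{r\to\infty}(u(r\theta)-r)=0$ conflates a radial limit with the full supremum $\sup_x(\theta\cdot x-u(x))$; condition (2) controls the former but does not immediately force $\theta\cdot x-u(x)\leq 0$ for all $x$ (condition (3) only bounds this quantity by $C_0$), and turning the radial limit into the value of $u^*$ at the boundary requires tracking how $Du^*(\xi)$ escapes to infinity as $\xi\to\theta$, which you do not address. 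Since the limit-hyperboloid step is the real work and the primal maximum principle already closes the argument cleanly, the dual detour adds technical burden precisely where the degeneracy is worst, without a compensating benefit.
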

\begin{proof}
In this proof, we will denote
\[\Omega_R:=\{(x_1, \bar{x})\mid x_1\leq 0,\,\,|\bar{x}|\leq R\}.\]

\textbf{Step 1.} We will show that $\lim\limits_{x_1\goto-\infty}u(x_1, \bar{x})=\tilde{u}(\bar{x})$ uniformly in $\Omega_R.$
First, by condition (2) we get  $u(x_1, \bar{x})$ is strictly convex.
In view of condition (3) we know for any fixed $\bar{x}\in\R^{n-1},$ when $x_1\leq 0,$
$u(x_1, \bar{x})$ is monotone increasing and bounded. Therefore, $\lim\limits_{x_1\goto-\infty}u(x_1, \bar{x})$ exists and we denote it by
$\tilde{u}(\bar{x}).$

Next, we want to show $\lim\limits_{x_1\goto-\infty}u(x_1, \bar{x})=\tilde{u}(\bar{x})$ uniformly on $\Omega_R.$ If not, then there exists some $\eta_0>0$ such that for any $n\in\mathbb{N},$ there exist some $-i_n<-n$ and $b_n\in \bar{B}^{n-1}_R:=\{\bar{x}\in\R^{n-1}\mid |\bar{x}|\leq R\}$ such that
\be\label{uni1}
u(-i_n, b_n)-\tilde{u}(b_n)>\eta_0>0.
\ee
Without loss of generality, we assume $\{b_n\}_{n=1}^\infty$ converges to $b_0.$
Since $|Du|\leq 1$, when $n>N_0$ we have, $|u(-i_n, b_n)-u(-i_n, b_0)|<\frac{\eta_0}{4}$
and $|\tilde{u}(b_n)-\tilde{u}(b_0)|<\frac{\eta_0}{4}.$ Combining with \eqref{uni1} we get,
$u(-i_n, b_0)-\tilde{u}(b_0)>\frac{\eta_0}{2}$ for all $n>N_0.$ This contradicts the assumption that
$\lim\limits_{x_1\goto-\infty}u(x_1, \bar{x})=\tilde{u}(\bar{x}).$

\textbf{Step 2.} In this step, we will show that $|Du|$ is uniformly bounded away from 1 on $\Omega_R.$

We will prove by contradiction. Assume the claim is not true, then we would have a sequence $\{(-i_n, b_n)\}_{n=1}^\infty\subset\Omega_R$
such that $|Du(-i_n, b_n)|\goto 1$ as $n\goto\infty,$ i.e., $-e_1=(-1, 0, \cdots, 0)$ is a light-like direction of $u(x).$ By condition (3)
we know $|u(-i_n,b_n)|$ is bounded for any $n\in\mathbb{N}$. Therefore, as $n\goto\infty$ $|Du(-i_n,b_n)|\goto 0.$ This leads to a contradiction.

which leads to a contradiction.

\textbf{Step 3.} In this step, we will show that $u=\z^1,$ where $\z^1$ is the standard semitrough constructed in Section \ref{se}.

Let $u^i(x)=u(x_1-i, \bar{x}), i\in\mathbb{N},$ and denote $U_R=\{x\in\R^n\mid -R\leq x_1\leq 0,$ $|\bar{x}|\leq R\}$. Then by step 1 and 2 we know that
$|u^i(x)|$ and $|Du^i(x)|$ are uniformly bounded in $U_R.$ Moreover, by \cite{Tre} we also know that $|D^2u^i|$ is uniformly bounded in $\R^n.$
Therefore, we conclude that $u^i(x_1, \bar{x})\goto\tilde{u}(\bar{x})$ converges smoothly on any compact set $K\subset\{(x_1, \bar{x})\mid x_1\leq 0\}$
and $\tilde{u}(\bar{x})$ satisfies $\sigma_1(\ka[\M_{\tilde{u}}])=n.$

In view of our assumption that
\[|u(x)-V_{\bar{\mathcal{B}}_+}(x)|\goto 0\,\,\mbox{as $x_1\goto-\infty$ and $|\bar{x}|\goto\infty$},\]
we obtain
\[|\tilde{u}(\bar{x})-V_{\bar{\mathcal{B}}_+}(0,\bar{x})|\goto 0\,\,\mbox{as $\bar{x}\goto\infty.$}\]
Thus, $\M_{\tilde{u}}$ is an $(n-1)$-dimensional CMC hypersurface satisfying $\sigma_1(\ka[\M_{\tilde{u}}])=n.$ Moreover, the Gauss map image of
$\M_{\tilde{u}}$ is $B_1^{n-1}:=\{x\in\R^{n-1}\mid |x|\leq 1\}.$ By the maximum principle we know that $\tilde{u}(\bar{x})=\sqrt{\lt(\frac{n-1}{n}\rt)^2+|\bar{x}|^2}.$
From the above discussions we conclude that as $|x|\goto +\infty, u(x)\goto \z^1(x)$.
Applying the maximum principle again we get $u\equiv \z^1.$
\end{proof}

\end{document}